
\documentclass[12pt]{amsart}%
\usepackage{graphicx,amscd,color,amsmath,amsfonts,amssymb,geometry}
\usepackage{amsmath}
\usepackage{amsfonts}
\usepackage{amssymb}
\usepackage{graphicx}%
\setcounter{MaxMatrixCols}{30}
\providecommand{\U}[1]{\protect\rule{.1in}{.1in}}
\providecommand{\U}[1]{\protect\rule{.1in}{.1in}}
\providecommand{\U}[1]{\protect\rule{.1in}{.1in}}
\providecommand{\U}[1]{\protect\rule{.1in}{.1in}} \textwidth 17cm
\geometry{left=3cm,right=3cm,top=3.3cm,bottom=3.3cm,headheight=3mm,paper=a4paper}
\theoremstyle{plain}

\newtheorem{theorem}{Theorem}[section]
\newtheorem{proposition}[theorem]{Proposition}

\newtheorem{remark}[theorem]{Remark}

\newtheorem{lemma}[theorem]{Lemma}
\newtheorem{problem}[theorem]{Problem}
\newtheorem{definition}[theorem]{Definition}
\numberwithin{equation}{section}

\begin{document}
\title[Some techniques on nonlinear analysis and applications]{Some techniques on nonlinear analysis and applications}
\author{Daniel Pellegrino}
\address{Departamento de Matem\'{a}tica, Universidade Federal da Para\'{\i}ba,
58.051-900 - Jo\~{a}o Pessoa, Brazil.}
\email{dmpellegrino@gmail.com}
\author{Joedson Santos}
\address{Departamento de Matem\'{a}tica, Universidade Federal de Sergipe, 49.500-000 -
Itabaiana, Brazil.}
\email{joedsonsr@yahoo.com.br}
\author{Juan B. Seoane-Sep\'{u}lveda}
\address{Facultad de Ciencias Matem\'{a}ticas. Departamento de An\'{a}lisis
Matem\'{a}tico. Universidad Complutense de Madrid. Plaza de las Ciencias 3.
28040. Madrid, Spain.}
\email{jseoane@mat.ucm.es}
\thanks{Daniel Pellegrino was supported by CNPq Grant 301237/2009-3. }
\thanks{Juan B. Seoane-Sep\'{u}lveda was supported by the Spanish Ministry of Science
and Innovation, grant MTM2009-07848.}
\keywords{Pietsch Domination Theorem, absolutely summing operators, absolutely summing
nonlinear mappings}

\begin{abstract}
In this paper we present two different results in the context of nonlinear
analysis. The first one is essentially a nonlinear technique that, in view of
its strong generality, may be useful in different practical problems. The
second result, more technical, but also connected to the first one, is an
extension of the well-known Pietsch Domination Theorem. The last decade
witnessed the birth of different families of Pietsch Domination-type results
and some attempts of unification. Our result, that we call ``full general
Pietsch Domination Theorem'' is potentially a definitive Pietsch Domination
Theorem which unifies the previous versions and delimits what can be proved in
this line.
The connections to the recent notion of weighted summability are traced.

\end{abstract}
\maketitle

\section{Introduction and motivation}

\label{intro}Common, even simple, mathematical problems usually involve
nonlinear maps, sometimes acting on sets with little (or none) algebraic
structure; so the extension of linear techniques to the nonlinear setting,
besides its intrinsic mathematical interest, is an important task for
potential applications. In fact it is mostly a challenging task, since linear
arguments are commonly ineffective in a more general setting. The following
problem illustrates this situation.

If $X,Y$ are Banach spaces, $u,v:X\rightarrow Y$ are continuous linear
operators, $C>0$ and $1\leq p\leq q<\infty$ it is possible to show that:

\begin{enumerate}
\item[\textbf{1.-)}] If
\begin{equation}
\text{ }%
{\textstyle\sum\limits_{j=1}^{m}}
\left\Vert u(x_{j})\right\Vert ^{p}\leq C%
{\textstyle\sum\limits_{j=1}^{m}}
\left\Vert v(x_{j})\right\Vert ^{p}\text{ for every }m\text{ and all }%
x_{1},...,x_{m}\in X,\text{ } \label{ree}%
\end{equation}
then%
\begin{equation}
\text{ }%
{\textstyle\sum\limits_{j=1}^{m}}
\left\Vert u(x_{j})\right\Vert ^{q}\leq C%
{\textstyle\sum\limits_{j=1}^{m}}
\left\Vert v(x_{j})\right\Vert ^{q}\text{ for every }m\text{ and all }%
x_{1},...,x_{m}\in X. \label{ree2}%
\end{equation}
Also, in the same direction:

\item[\textbf{2.-)}] If%
\begin{equation}%
{\textstyle\sum\limits_{j=1}^{m}}
\left\Vert u(x_{j})\right\Vert ^{p}\leq C\sup_{\varphi\in B_{X^{\ast}}}%
{\textstyle\sum\limits_{j=1}^{m}}
\left\vert \varphi(x_{j})\right\vert ^{p}\text{ for every }m\text{ and all
}x_{1},...,x_{m}\in X, \label{wqqq}%
\end{equation}
then%
\begin{equation}%
{\textstyle\sum\limits_{j=1}^{m}}
\left\Vert u(x_{j})\right\Vert ^{q}\leq C\sup_{\varphi\in B_{X^{\ast}}}%
{\textstyle\sum\limits_{j=1}^{m}}
\left\vert \varphi(x_{j})\right\vert ^{q}\text{ for every }m\text{ and all
}x_{1},...,x_{m}\in X, \label{wqqq2}%
\end{equation}
where $X^{\ast}$ is the topological dual of $X$ and $B_{X^{\ast}}$ denotes its
closed unit ball.\ More generally, if
\begin{equation}%
\begin{array}
[c]{c}%
p_{j}\leq q_{j}\text{ for }j=1,2,\\
1\leq p_{1}\leq p_{2}<\infty,\\
1\leq q_{1}\leq q_{2}<\infty,\\
\frac{1}{p_{1}}-\frac{1}{q_{1}}\leq\frac{1}{p_{2}}-\frac{1}{q_{2}},
\end{array}
\label{wer}%
\end{equation}
then
\begin{equation}
\left(
{\textstyle\sum\limits_{j=1}^{m}}
\left\Vert u(x_{j})\right\Vert ^{q_{1}}\right)  ^{1/q_{1}}\leq C\sup
_{\varphi\in B_{X^{\ast}}}\left(
{\textstyle\sum\limits_{j=1}^{m}}
\left\vert \varphi(x_{j})\right\vert ^{p_{1}}\right)  ^{1/p_{1}}\text{ for
every }m\text{ and all }x_{1},...,x_{m}\in X \label{d1}%
\end{equation}
implies that%
\begin{equation}
\left(
{\textstyle\sum\limits_{j=1}^{m}}
\left\Vert u(x_{j})\right\Vert ^{q_{2}}\right)  ^{1/q_{2}}\leq C\sup
_{\varphi\in B_{X^{\ast}}}\left(
{\textstyle\sum\limits_{j=1}^{m}}
\left\vert \varphi(x_{j})\right\vert ^{p_{2}}\right)  ^{1/p_{2}}\text{ for
every }m\text{ and all }x_{1},...,x_{m}\in X. \label{d2}%
\end{equation}

\end{enumerate}

\begin{problem}
What about nonlinear versions of the above results? Are there any?
\end{problem}

\begin{problem}
What about nonlinear versions in which the spaces $X$ and $Y$ are just sets,
with no structure at all?
\end{problem}

The interested reader can find the proof of the implication (\ref{d1}%
)$\Rightarrow$(\ref{d2}) in \cite[p. 198]{DJT}. This result was essentially
proved by S. Kwapie\'{n} in 1968 (see \cite{kw68}) and it is what is now
called ``Inclusion Theorem for absolutely summing operators''. A quick look
shows that the linearity is fully explored and a nonlinear version of this
result, if there is any, would require a whole new technique. It is worth
mentioning that practical problems may also involve sets with less structure
than Banach spaces (or less structure than linear spaces or even than metric
spaces) and a ``full'' nonlinear version (with no structure on the spaces
involved) would certainly be interesting for potential applications.

In this direction we will prove a very general result, which we will call
``Inclusion Principle'', which, due its extreme generality, may be useful in
different contexts, even outside of pure mathematical analysis. The arguments
used in the proof of the ``Inclusion Principle'' are, albeit tricky, fairly
clear and simple in nature, but we do believe this technique may be useful in
different contexts. To illustrate its reach, at least in the context of
Functional Analysis, we show that very particular cases of the Inclusion
Principle can contribute to the nonlinear theory of absolutely summing operators.

Below, as an illustration, we describe an extremely particular case of the
forthcoming Inclusion Principle:

Let $X$ be an arbitrary non-void set and $Y$ be a normed space; suppose that
$p_{j}$ and $q_{j}$ satisfy (\ref{wer}). If $f,g:X\rightarrow Y$ are arbitrary
mappings and there is a constant $C>0$ so that%
\[
\sum_{j=1}^{m}\left\Vert f(x_{j})\right\Vert ^{q_{1}}\leq C\sum_{j=1}%
^{m}\left\Vert g(x_{j})\right\Vert ^{p_{1}},
\]
for every $m$ and all $x_{1},\ldots,x_{m}\in X$, then there is a constant
$C_{1}>0$ such that
\[
\left(  \sum_{j=1}^{m}\left\Vert f(x_{j})\right\Vert ^{q_{2}}\right)
^{\frac{1}{\alpha}}\leq C_{1}\sum_{j=1}^{m}\left\Vert g(x_{j})\right\Vert
^{p_{2}}%
\]
for every $m$ and all $x_{1},\ldots,x_{m}\in X$, with%
\[
\alpha=\frac{q_{2}p_{1}}{q_{1}p_{2}}\text{ if }p_{1}<p_{2}.
\]
The case $p_{1}=p_{2}$ is trivial$.$ The parameter $\alpha$ is a kind of
adjustment, i.e., the price that one has to pay for the complete lack of
linearity, and precisely when $p_{j}=q_{j}$ for $j=1$ and $2$ we have
$\alpha=1$ and no adjustment is needed. In other words, the parameter $\alpha$
indicates the necessary adjustments (in view of the lack of linearity) when
$p_{j}$ and $q_{j}$ become distant.

The second main contribution of this paper is more technical, but also useful.
It is what we call ``full general Pietsch Domination Theorem'' which, as will
be shown, has several applications and seems to be a definitive answer to the
attempt of delimiting the amplitude of action of Pietsch Domination-type theorems.

The Pietsch Domination Theorem (PDT) (sometimes stated as the Pietsch
Factorization Theorem) was proved in 1967 by A. Pietsch, in his classical
paper \cite{stu}, and since then it has played a special and important role in
Banach Space Theory having its honour place in several textbooks related to
Banach Space Theory \cite{AK, DF, DJT, Pi, Ryan, Woy}; PDT has a strong
connection with the aforementioned inclusion results, as we explain below. In
fact, if $0<p<\infty,$ PDT states that for a given continuous linear operator
$u:X\rightarrow Y$ the following assertions are equivalent:

(i) There exists a $C>0$ so that
\[%
{\textstyle\sum\limits_{j=1}^{m}}
\left\Vert u(x_{j})\right\Vert ^{p}\leq C\sup_{\varphi\in B_{X^{\ast}}}%
{\textstyle\sum\limits_{j=1}^{m}}
\left\vert \varphi(x_{j})\right\vert ^{p}\text{ for every }m.
\]
(ii) There are a Borel probability measure $\mu$ on $B_{X^{\ast}}$ (with the
weak-star topology) and $C>0$ such that%
\begin{equation}
\left\Vert u(x)\right\Vert \leq C\left(  \int_{B_{X^{\ast}}}\left\vert
\varphi(x_{j})\right\vert ^{p}d\mu\right)  ^{\frac{1}{p}}. \label{p111}%
\end{equation}
Using the canonical inclusions between $L_{p}$ spaces we conclude that, if
$0<p\leq q<\infty,$ the inequality (\ref{p111}) implies that%
\[
\left\Vert u(x)\right\Vert \leq C\left(  \int_{B_{X^{\ast}}}\left\vert
\varphi(x_{j})\right\vert ^{q}d\mu\right)  ^{\frac{1}{q}}%
\]
and we obtain the implication (\ref{wqqq})$\Rightarrow$(\ref{wqqq2}) as a corollary.

Due to its strong importance in Banach Space Theory, PDT was re-discovered in
different contexts in the last decades (e.g. \cite{AMe, CP, Dimant, FaJo,
Geiss, SP, Anais, Muj}) and, since 2009, in \cite{BPR, BPRn, psjmaa} some
attempts were made in the direction of showing that one unique PDT can be
stated in such a general way that all the possible Pietsch Domination-type
theorems would be straightforward particular cases of this unified
Pietsch-Domination theorem.

Thus, the second contribution of this paper is to prove a ``full general
Pietsch Domination Theorem'' that, besides its own interest, we do believe
that will be useful to delimit the scope of Pietsch-type theorems. Some
connections with the recent promising notion of weighted summability
introduced in \cite{psmz} are traced.

\section{The Inclusion Principle}

In this section we deal with general values for $p_{j}$ and $q_{j}$ satisfying
(\ref{wer}). In order to be useful in different contexts, we state the result
in a very general form.

Let $X$, $Y,$ $Z$, $V$ and $W$ be (arbitrary) non-void sets. The set of all
mappings from $X$ to $Y$ will be represented by $Map(X,Y)$. Let $\mathcal{H}%
\subset$ $Map(X,Y)$ and
\begin{align*}
R\colon Z\times W  &  \longrightarrow\lbrack0,\infty),\text{ and }\\
S\colon\mathcal{H}\times Z\times V  &  \longrightarrow\lbrack0,\infty)
\end{align*}
be arbitrary mappings. If $1\leq p\leq q<\infty$, suppose that%
\[
\sup_{w\in W}\sum_{j=1}^{m}R\left(  z_{j},w\right)  ^{p}<\infty\text{ and
}\sup_{v\in V}\sum_{j=1}^{m}S(f,z_{j},v)^{q}<\infty
\]
for every positive integer $m$ and $z_{1},...,z_{m}\in Z$ (in most of the
applications $V$ and $W$ are compact spaces and $R$ and $S$ have some trace of
continuity to assure that both $\sup$ are finite). If $\alpha\in\mathbb{R}$,
we will say that $f\in\mathcal{H}$ is $RS$-abstract $((q,\alpha),p)$-summing
(notation $f\in RS_{((q,\alpha),p)}$) if there is a constant $C>0$ so that%
\begin{equation}
\left(  \sup_{v\in V}\sum_{j=1}^{m}S(f,z_{j},v)^{q}\right)  ^{\frac{1}{\alpha
}}\leq C\sup_{w\in W}\sum_{j=1}^{m}R\left(  z_{j},w\right)  ^{p},
\end{equation}
for all $z_{1},\ldots,z_{m}\in Z$ and $m$.

\begin{theorem}
[Inclusion Principle]\label{yio}If $p_{j}$ and $q_{j}$ satisfy (\ref{wer}),
then%
\[
RS_{\left(  \left(  q_{1},1\right)  ,p_{1}\right)  }\subset RS_{\left(
\left(  q_{2},\alpha\right)  ,p_{2}\right)  }\text{ }%
\]
for%
\[
\alpha=\frac{q_{2}p_{1}}{q_{1}p_{2}}\text{ if }p_{1}<p_{2}.
\]

\end{theorem}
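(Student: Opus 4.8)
The plan is to reduce the abstract statement to a \emph{weighted} form of the summing hypothesis and then run a Hölder argument in the spirit of Kwapie\'n's inclusion theorem; the only genuinely new ingredient is that, with no algebraic structure on $Z$, the scaling of ``vectors'' used in the linear proof must be replaced by \emph{repetition} of entries, and it is exactly this replacement that forces the adjustment parameter $\alpha$.

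First I would establish the weighted inequality. Fix $f\in RS_{((q_1,1),p_1)}$ with constant $C$. Given $z_1,\dots,z_m$ and positive integers $n_1,\dots,n_m$, apply the defining inequality to the list in which each $z_j$ is repeated $n_j$ times; since repetition merely adds $n_j$ identical summands, this gives $\sup_v\sum_j n_j S(f,z_j,v)^{q_1}\le C\sup_w\sum_j n_j R(z_j,w)^{p_1}$. Clearing denominators yields the same inequality for positive rational weights, and since each side is Lipschitz (hence continuous) in the weight vector on $[0,\infty)^m$, with finite Lipschitz constants by the standing finiteness hypotheses, a density argument promotes it to
\[
\sup_v\sum_j\lambda_j S(f,z_j,v)^{q_1}\le C\sup_w\sum_j\lambda_j R(z_j,w)^{p_1}\quad\text{for all }\lambda_1,\dots,\lambda_m\ge 0.
\]
The decisive structural point is that the weights enter \emph{to the first power} on both sides, which is precisely what ``repeat'' (rather than ``scale'') produces.

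Next I would exploit the freedom in $(\lambda_j)$. Note first that $p_1<p_2$ together with (\ref{wer}) forces $q_1<q_2$, and that (\ref{wer}) is equivalent to $\alpha\ge 1$. Fix $v\in V$ and insert $\lambda_j=S(f,z_j,v)^{q_2-q_1}$ (admissible since $q_2>q_1$): the left-hand side becomes $\sum_j S(f,z_j,v)^{q_2}$, while on the right I would apply Hölder's inequality with conjugate exponents $q_2/(q_2-q_1)$ and $q_2/q_1$ to the product $S(f,z_j,v)^{q_2-q_1}R(z_j,w)^{p_1}$. Writing $\tilde p:=p_1 q_2/q_1=\alpha p_2$, this gives $\sum_j S(f,z_j,v)^{q_2}\le C\,(\sum_j S(f,z_j,v)^{q_2})^{(q_2-q_1)/q_2}\sup_w(\sum_j R(z_j,w)^{\tilde p})^{q_1/q_2}$. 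Cancelling the common power of the $S$-sum (the vanishing case being trivial) and taking the supremum over $v$ yields the intermediate estimate $\sup_v\sum_j S(f,z_j,v)^{q_2}\le C^{q_2/q_1}\sup_w\sum_j R(z_j,w)^{\tilde p}$.

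Finally I would convert the $\tilde p$-sum into a $p_2$-sum. Because $\tilde p=\alpha p_2\ge p_2$, the inclusion $\ell_{p_2}\hookrightarrow\ell_{\tilde p}$ for finite sequences gives $\sum_j R(z_j,w)^{\tilde p}\le(\sum_j R(z_j,w)^{p_2})^{\alpha}$, and since $t\mapsto t^{\alpha}$ is increasing the supremum over $w$ passes through. Combining with the intermediate estimate and raising to the power $1/\alpha$ yields $(\sup_v\sum_j S(f,z_j,v)^{q_2})^{1/\alpha}\le C^{p_2/p_1}\sup_w\sum_j R(z_j,w)^{p_2}$, i.e. $f\in RS_{((q_2,\alpha),p_2)}$. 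The two Hölder applications and the $\ell_p$-inclusion are routine; the step I expect to demand the most care, and the conceptual heart of the proof, is the first one: justifying the passage from integer repetitions to arbitrary real weights and recognizing that, because repetition forces the weights to appear linearly rather than with the exponents that homogeneity would supply, the two sides can only be matched after paying the price $\alpha$, whose admissibility $\alpha\ge 1$ is exactly what hypothesis (\ref{wer}) guarantees.
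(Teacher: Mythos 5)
Your proof is correct, and its skeleton is the paper's: the repetition-of-entries argument (clearing denominators, then density) to get the weighted inequality with first-power weights, then the weights $\lambda_j=S(f,z_j,v)^{q_2-q_1}$ (the paper writes these as $\lambda_j(v)^{q_1}$ with $\lambda_j(v)=S(f,z_j,v)^{q_2/q}$, which is the same quantity), then H\"older, an $\ell_r$-monotonicity step, and cancellation, ending with the same constant $C^{p_2/p_1}$. The one genuine difference is the arrangement of the H\"older step, and it is worth recording. The paper introduces auxiliary exponents $p,q$ via $\frac1p=\frac1{p_1}-\frac1{p_2}$, $\frac1q=\frac1{q_1}-\frac1{q_2}$, applies H\"older with the conjugate pair $\left(p/p_1,\,p_2/p_1\right)$ so that the $R$-sum lands at exponent $p_2$ at once, and then pushes the weight-sum down using $\left\Vert\cdot\right\Vert_{\ell_{q_1p/p_1}}\leq\left\Vert\cdot\right\Vert_{\ell_q}$, which rests on the chain $q_1p/p_1\geq p\geq q$. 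You do the mirror image: H\"older with $\left(q_2/(q_2-q_1),\,q_2/q_1\right)$ lands the $S$-sum at exponent $q_2$ at once, and the $\ell_p$-inclusion is instead applied on the $R$-side, via $\sum_j R(z_j,w)^{\tilde p}\leq\bigl(\sum_j R(z_j,w)^{p_2}\bigr)^{\alpha}$ with $\tilde p=\alpha p_2$. What your arrangement buys is transparency: it avoids the auxiliary exponents entirely and isolates exactly what is used, namely $q_1<q_2$ and $\tilde p\geq p_2$, i.e.\ $\alpha\geq1$; the paper's chain $q_1p/p_1\geq p\geq q$ invokes (\ref{wer}) more wastefully, although unwinding it shows the same minimal condition is at work. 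One correction, though it does not affect your argument: (\ref{wer}) is \emph{not} equivalent to $\alpha\geq1$, only strictly stronger. For instance, $p_1=1$, $q_1=2$, $p_2=3/2$, $q_2=3$ satisfies all the order constraints and gives $\alpha=1$, yet $\frac1{p_1}-\frac1{q_1}=\frac12>\frac13=\frac1{p_2}-\frac1{q_2}$, so (\ref{wer}) fails. The implication you actually use, (\ref{wer}) $\Rightarrow\alpha\geq1$, is true (it needs $p_2\leq q_2$: from $\frac{s-1}{p_2}\leq\frac{r-1}{q_2}\leq\frac{r-1}{p_2}$ with $s=p_2/p_1$, $r=q_2/q_1$ one gets $s\leq r$), so only the word ``equivalent'' should be weakened; as a bonus, your proof then visibly establishes the inclusion under the weaker hypothesis $\alpha\geq1$, $q_1<q_2$.
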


\begin{proof}
Let $f\in RS_{((q_{1},1),p_{1})}$. There is a $C>0$ such that%
\begin{equation}
\sup_{v\in V}\sum_{j=1}^{m}S(f,z_{j},v)^{q_{1}}\leq C\sup_{w\in W}\sum
_{j=1}^{m}R\left(  z_{j},w\right)  ^{p_{1}},\label{yr}%
\end{equation}
for all $z_{1},\ldots,z_{m}\in Z$ and $m\in\mathbb{N}$. If each $\eta
_{1},...,\eta_{m}$ is a positive integer, by considering each $z_{j}$ repeated
$\eta_{j}$ times in (\ref{yr}) one can easily note that
\begin{equation}
\sup_{v\in V}\sum_{j=1}^{m}\eta_{j}S(f,z_{j},v)^{q_{1}}\leq C\sup_{w\in W}%
\sum_{j=1}^{m}\eta_{j}R\left(  z_{j},w\right)  ^{p_{1}},\label{yr2}%
\end{equation}
for all $z_{1},\ldots,z_{m}\in Z$ and $m\in\mathbb{N}$. Now, using a clever
argument credited to Mendel and Schechtman (used recently, in different
contexts, in \cite{FaJo, psmz, psjmaa}) we can conclude that (\ref{yr2}) holds
for arbitrary positive real numbers $\eta_{j}.$ The idea is to pass from
integers to rationals by \textquotedblleft cleaning\textquotedblright%
\ denominators and from rationals to real numbers using density.

Since $p_{1}<p_{2}$ we have $q_{1}<q_{2}.$ Define $p,q$ as
\[
\frac{1}{p}=\frac{1}{p_{1}}-\frac{1}{p_{2}}\text{ \ \ and \ \ }\frac{1}%
{q}=\frac{1}{q_{1}}-\frac{1}{q_{2}}.
\]
So we have $1\leq q\leq p<\infty;$ next, let $m\in%
\mathbb{N}
$ and $z_{1},z_{2},...,z_{m}\in Z$ be fixed. For each $j=1,...,m$, consider
the map%
\begin{align*}
\lambda_{j}  &  :V\rightarrow\lbrack0,\infty)\\
\lambda_{j}(v)  &  :=S(f,z_{j},v)^{\frac{q_{2}}{q}}.
\end{align*}
Thus,
\begin{align*}
\lambda_{j}(v)^{q_{1}}S(f,z_{j},v)^{q_{1}}  &  =S(f,z_{j},v)^{\frac{q_{1}%
q_{2}}{q}}S(f,z_{j},v)^{q_{1}}\\
&  =S(f,z_{j},v)^{q_{2}}.
\end{align*}
Recalling that (\ref{yr2}) is valid for arbitrary positive real numbers
$\eta_{j},$ we get, for $\eta_{j}=\lambda_{j}(v)^{q_{1}}$,
\begin{align*}
\sum_{j=1}^{m}S(f,z_{j},v)^{q_{2}}  &  =\sum_{j=1}^{m}\lambda_{j}(v)^{q_{1}%
}S(f,z_{j},v)^{q_{1}}\\
&  \leq C\sup_{w\in W}\sum_{j=1}^{m}\lambda_{j}(v)^{q_{1}}R\left(
z_{j},w\right)  ^{p_{1}}%
\end{align*}
for every $v\in V$. Also, since $p,p_{2}>p_{1}$ and $\frac{1}{(p/p_{1})}%
+\frac{1}{(p_{2}/p_{1})}=1,$ invoking H\"{o}lder's Inequality we obtain
\begin{align*}
\sum_{j=1}^{m}S(f,z_{j},v)^{q_{2}}  &  \leq C\sup_{w\in W}\sum_{j=1}%
^{m}\lambda_{j}(v)^{q_{1}}R\left(  z_{j},w\right)  ^{p_{1}}\\
&  \leq C\sup_{w\in W}\left[  \left(  \sum_{j=1}^{m}\lambda_{j}(v)^{\frac
{q_{1}p}{p_{1}}}\right)  ^{\frac{p_{1}}{p}}\left(  \sum_{j=1}^{m}R\left(
z_{j},w\right)  ^{p_{2}}\right)  ^{\frac{p_{1}}{p_{2}}}\right] \\
&  =C\left(  \sum_{j=1}^{m}\lambda_{j}(v)^{\frac{q_{1}p}{p_{1}}}\right)
^{\frac{p_{1}}{p}}\sup_{w\in W}\left(  \sum_{j=1}^{m}R\left(  z_{j},w\right)
^{p_{2}}\right)  ^{\frac{p_{1}}{p_{2}}}%
\end{align*}
for every $v\in V$. Since $\frac{q_{1}p}{p_{1}}\geq p\geq q$ we have
$\left\Vert .\right\Vert _{\ell_{\frac{q_{1}p}{p_{1}}}}\leq\left\Vert
.\right\Vert _{\ell_{q}}$ and then%
\begin{align*}
\sum_{j=1}^{m}S(f,z_{j},v)^{q_{2}}  &  \leq C\left(  \sum_{j=1}^{m}\lambda
_{j}(v)^{q}\right)  ^{\frac{q_{1}}{q}}\sup_{w\in W}\left(  \sum_{j=1}%
^{m}R\left(  z_{j},w\right)  ^{p_{2}}\right)  ^{\frac{p_{1}}{p_{2}}}\\
&  =C\left(  \sum_{j=1}^{m}S(f,z_{j},v)^{q_{2}}\right)  ^{\frac{q_{1}}{q}}%
\sup_{w\in W}\left(  \sum_{j=1}^{m}R\left(  z_{j},w\right)  ^{p_{2}}\right)
^{\frac{p_{1}}{p_{2}}}%
\end{align*}
for every $v\in V.$ We thus have%
\[
\left(  \sum_{j=1}^{m}S(f,z_{j},v)^{q_{2}}\right)  ^{1-\frac{q_{1}}{q}}\leq
C\sup_{w\in W}\left(  \sum_{j=1}^{m}R\left(  z_{j},w\right)  ^{p_{2}}\right)
^{\frac{p_{1}}{p_{2}}}%
\]
for every $v\in V,$ and we can finally conclude that
\[
\left(  \sup_{v\in V}\sum_{j=1}^{m}S(f,z_{j},v)^{q_{2}}\right)  ^{\frac
{q_{1}p_{2}}{q_{2}p_{1}}}\leq C^{\frac{p_{2}}{p_{1}}}\sup_{w\in W}\sum
_{j=1}^{m}R\left(  z_{j},w\right)  ^{p_{2}}.
\]

\end{proof}

\begin{remark}
\label{remalpha} It is interesting to mention that as $q_{j}$ becomes closer
to $p_{j}$ for $j=1$ and $2,$ the value $\frac{q_{1}p_{2}}{q_{2}p_{1}}$
becomes closer to $1$(which occurs in the linear setting when $p_{j}=q_{j}$
for $j=1$ and $2$). In other words, the effect of the lack of linearity in our
estimates is weaker when $p_{j}$ and $q_{j}$ are closer and, in the extreme
case where $p_{1}=q_{1}$ and $p_{2}=q_{2},$ then $\alpha=1$ and we have a
\textquotedblleft perfect generalization\textquotedblright\ of the linear result.
\end{remark}

\section{Applications on the nonlinear absolutely summing operators}

\subsection{Absolutely summing operators: a brief summary}

In the real line it is well-known that a series is absolutely convergent
precisely when it is unconditionally convergent. For infinite-dimensional
Banach spaces it is easy to verify that the situation is different; for
example, for $\ell_{p}$ spaces with $1<p<\infty$, it is easy to construct an
unconditionally convergent series which fails to be absolutely convergent.
However the behavior for arbitrary Banach spaces was not known before 1950.
For $\ell_{1},$ for example, the construction is much more complicated (see M.
S. McPhail's work from 1947, \cite{Mac}).

This perspective leads to the feeling that this property (having an
unconditionally summable series which is not absolutely summable) could be
shared by all infinite-dimensional Banach-spaces. This question was raised by
S. Banach in his monograph \cite[page 40]{Banach32} and appears as Problem 122
in the Scottish Book (see \cite{Mau}).

In 1950, A. Dvoretzky and C. A. Rogers \cite{DR} solved this question by
showing that in every infinite-dimensional Banach space there is an
unconditionally convergent series which fails to be absolutely convergent.
This new panorama of the subject called the attention of A. Grothendieck who
provided, in his thesis \cite{Gro1955}, a different approach to the
Dvoretzky-Rogers result. His thesis, together with his R\'{e}sum\'{e}
\cite{Gro1953}, can be regarded as the beginning of the theory of absolutely
$(q,p)$-summing operators.

The notion of absolutely $(q,p)$-summing operator, as we know nowadays, is due
to B. Mitiagin and A. Pe\l czy\'{n}ski \cite{MPel} and A. Pietsch \cite{stu}.
Pietsch's paper is a classical and particular role is played by the Domination
Theorem, which presents an unexpected measure-theoretical characterization of
$p$-summing operators. The same task was brilliantly done, one year later, by
J. Lindenstrauss and A. Pe\l czy\'{n}ski's paper \cite{LP} which reformulated
Grothendieck's tensorial arguments giving birth to a comprehensible theory
with broad applications in Banach Space Theory.

From now on the space of all continuous linear operators from a Banach space
$X$ to a Banach space $Y$ will be denoted by $\mathcal{L}(X,Y).$ If $1\leq
p\leq q<\infty,$ we say that the Banach space operator $u:X\rightarrow Y$ is
$(q,p)$-summing if there is an induced operator
\[%
\begin{array}
[c]{ccccc}%
\hat{u} & : & \ell_{p}^{\text{weak}}(X) & \longrightarrow & \ell
_{q}^{\text{strong}}(Y)\\
&  & (x_{n})_{n=1}^{\infty} & \mapsto & (ux_{n})_{n=1}^{\infty}.\\
&  &  &  &
\end{array}
\]
Above $\ell_{p}^{\text{weak}}(X):=\{(x_{j})_{j=1}^{\infty}\subset
X:\sup_{\varphi\in B_{X^{\ast}}}(%
{\textstyle\sum\nolimits_{j}}
\left\vert \varphi(x_{j})\right\vert ^{p})^{1/p}<\infty\}.$ The class of
absolutely $(q,p)$-summing linear operators from $X$ to $Y$ will be
represented by $\Pi_{q,p}\left(  X,Y\right)  .$ For details on the linear
theory of absolutely summing operators we refer to the classical book
\cite{DJT}. The linear theory of absolutely summing operators was intensively
investigated in the 70's and several classical papers can tell the story (we
mention \cite{ben1,ben2,Carl,Dies,DPR,pisier} and the monograph \cite{DJT} for
a complete panorama).

Special role is played by Grothendieck's Theorem and Pietsch-Domination Theorem:

\begin{theorem}
[Grothendieck]Every continuous linear operator from $\ell_{1}$ to $\ell_{2}$
is absolutely $(1,1)$-summing.
\end{theorem}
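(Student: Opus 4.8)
The plan is to deduce this from Grothendieck's inequality, which is the genuinely deep ingredient; everything else is bookkeeping. Recall that we must produce a constant $C>0$ with
\[
\sum_{j=1}^{m}\left\Vert u(x_{j})\right\Vert \leq C\sup_{\varphi\in B_{\ell_{1}^{\ast}}}\sum_{j=1}^{m}\left\vert \varphi(x_{j})\right\vert
\]
for every $u\in\mathcal{L}(\ell_{1},\ell_{2})$, every $m$ and all $x_{1},\ldots,x_{m}\in\ell_{1}$. By homogeneity I may assume $\left\Vert u\right\Vert \leq 1$, and by a routine density/truncation argument (using continuity of $u$ and of both sides) I may assume each $x_{j}$ is supported on the first $n$ coordinates, writing $x_{j}=\sum_{i=1}^{n}x_{ij}e_{i}$ in terms of the canonical basis $(e_{i})$ of $\ell_{1}$.

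First I would dualize the strong norm appearing on the left. For each $j$ pick a unit vector $y_{j}\in\ell_{2}$ with $\left\Vert u(x_{j})\right\Vert =\langle u(x_{j}),y_{j}\rangle$. Setting $v_{i}:=u(e_{i})$, which satisfies $\left\Vert v_{i}\right\Vert \leq\left\Vert u\right\Vert \leq 1$ since $\left\Vert e_{i}\right\Vert _{1}=1$, linearity gives
\[
\sum_{j=1}^{m}\left\Vert u(x_{j})\right\Vert =\sum_{j=1}^{m}\langle u(x_{j}),y_{j}\rangle=\sum_{i=1}^{n}\sum_{j=1}^{m}x_{ij}\langle v_{i},y_{j}\rangle.
\]
This is precisely the shape to which Grothendieck's inequality applies: with the scalar matrix $(x_{ij})$ and the norm-$\leq 1$ Hilbert-space vectors $v_{i},y_{j}$, the inequality yields
\[
\left\vert \sum_{i,j}x_{ij}\langle v_{i},y_{j}\rangle\right\vert \leq K_{G}\sup_{\left\vert s_{i}\right\vert \leq 1,\ \left\vert t_{j}\right\vert \leq 1}\left\vert \sum_{i,j}x_{ij}s_{i}t_{j}\right\vert ,
\]
where $K_{G}$ is Grothendieck's universal constant.

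It then remains to identify the scalar supremum on the right with the weak norm we want. For fixed scalars $(s_{i})$ with $\left\vert s_{i}\right\vert \leq 1$ the sequence $s=(s_{i})$ defines a functional $\varphi_{s}\in B_{\ell_{1}^{\ast}}=B_{\ell_{\infty}}$, so that $\sum_{i}x_{ij}s_{i}=\varphi_{s}(x_{j})$; optimizing over $\left\vert t_{j}\right\vert \leq 1$ by matching signs gives
\[
\sup_{\left\vert s_{i}\right\vert \leq 1,\ \left\vert t_{j}\right\vert \leq 1}\left\vert \sum_{i,j}x_{ij}s_{i}t_{j}\right\vert =\sup_{\varphi\in B_{\ell_{1}^{\ast}}}\sum_{j=1}^{m}\left\vert \varphi(x_{j})\right\vert .
\]
Chaining the three displays produces the desired inequality with $C=K_{G}$, hence $u\in\Pi_{1,1}(\ell_{1},\ell_{2})$.

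The main obstacle is of course Grothendieck's inequality itself: the existence of a \emph{finite} universal constant $K_{G}$, independent of the matrix size $n$ and of the Hilbert space, is the whole content of the theorem, whereas the steps above are merely dualization and a change of summation. I would therefore either invoke it as the known ``fundamental theorem of the metric theory of tensor products'', or, for a self-contained account, prove it separately---for instance via Krivine's argument, replacing the inner products by a Gaussian (spherical) integral and estimating the resulting $\arcsin$ kernel to extract the constant. An alternative route would factor $u$ through a Hilbert space via the Pietsch-type machinery of Section~3, but since Grothendieck's inequality and Grothendieck's theorem are essentially equivalent, this deep step cannot be bypassed.
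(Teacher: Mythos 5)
Your proof is correct, but note that the paper offers no proof to compare against: this theorem appears in the survey subsection on classical background and is simply quoted (with references to Grothendieck's R\'esum\'e and the Lindenstrauss--Pe\l czy\'nski reformulation), so your argument stands on its own. What you give is the standard derivation of Grothendieck's theorem from Grothendieck's inequality, and each step checks out: the truncation to finitely supported $x_{j}$ is legitimate since both sides of the summing inequality are continuous in each $x_{j}$ (the right-hand side is the weak-$\ell_{1}$ norm of the finite sequence); the dualization $\left\Vert u(x_{j})\right\Vert =\langle u(x_{j}),y_{j}\rangle$ with unit $y_{j}$, the rewriting $\sum_{j}\left\Vert u(x_{j})\right\Vert =\sum_{i,j}x_{ij}\langle v_{i},y_{j}\rangle$ with $v_{i}=u(e_{i})$, $\left\Vert v_{i}\right\Vert \leq\left\Vert u\right\Vert$, and the identification
\[
\sup_{\left\vert s_{i}\right\vert \leq1,\ \left\vert t_{j}\right\vert \leq1}\Bigl\vert \sum_{i,j}x_{ij}s_{i}t_{j}\Bigr\vert =\sup_{\varphi\in B_{\ell_{\infty}}}\sum_{j=1}^{m}\left\vert \varphi(x_{j})\right\vert
\]
are all exact, yielding $u\in\Pi_{1,1}(\ell_{1},\ell_{2})$ with $C=K_{G}\left\Vert u\right\Vert$. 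You are also right that this reduction cannot be made cheap: the theorem and the inequality are essentially equivalent, so invoking $K_{G}$ (or proving it via Krivine's $\arcsin$ argument) is the honest division of labor. Two minor points worth recording: in the complex case you should take $\left\Vert u(x_{j})\right\Vert =\left\vert \langle u(x_{j}),y_{j}\rangle\right\vert$ and absorb the phases into the $t_{j}$, using the complex Grothendieck constant; and you should handle $u(x_{j})=0$ by choosing $y_{j}$ arbitrarily, which costs nothing. Neither affects the validity of the argument.
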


\begin{theorem}
[Lindenstrauss and Pe\l czy\'{n}ski]\label{uyy}If $X$ and $Y$ are
infinite-dimensional Banach spaces, $X$ has an unconditional Schauder basis
and $\Pi_{1,1}(X,Y)=\mathcal{L}(X,Y)$ then $X=\ell_{1}$ and $Y$ is a Hilbert space.
\end{theorem}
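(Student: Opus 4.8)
The plan is to extract from the hypothesis a single \emph{uniform} constant and then run all the geometry through it. First I would observe that $\Pi_{1,1}(X,Y)$ is complete under its summing norm $\pi_{1,1}(\cdot)$, that this norm dominates the operator norm, and that by assumption $\Pi_{1,1}(X,Y)=\mathcal L(X,Y)$ as vector spaces; the closed graph theorem then furnishes a constant $C>0$ with $\pi_{1,1}(u)\le C\|u\|$ for \emph{every} $u\in\mathcal L(X,Y)$. Next I would invoke the Inclusion Theorem in its linear form, the implication (\ref{d1})$\Rightarrow$(\ref{d2}) under (\ref{wer}), with $(q_1,p_1)=(1,1)$ and $(q_2,p_2)=(2,2)$ (all four conditions in (\ref{wer}) hold, the last being $1-1=0\le \tfrac12-\tfrac12=0$). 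This upgrades every operator to a $2$-summing one, so in fact $\Pi_{2,2}(X,Y)=\mathcal L(X,Y)$ with a uniform bound $\pi_{2,2}(u)\le C'\|u\|$; the $2$-summing reformulation, which carries a Hilbertian Pietsch factorization, is the technically convenient form to keep on hand.

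For the conclusion $X\cong\ell_1$ I would normalize the unconditional basis $(e_i)$ of $X$, use that the basis projections $P_n$ are uniformly bounded so that operators supported on $\mathrm{span}(e_1,\dots,e_n)$ still obey the uniform bound, and argue at the level of fixed $n$. By Dvoretzky's theorem $Y$ contains vectors $y_1,\dots,y_n$ that are $2$-equivalent to the unit vector basis of $\ell_2^n$. Testing the summing inequality for the operator $e_i\mapsto y_i$ against the sequence $(a_ie_i)_i$, and using unconditionality to identify the weak-$\ell_1$ norm of $(a_ie_i)$ with $\|\sum_i a_ie_i\|$, I would try to extract a lower estimate $\sum_i|a_i|\le D\,\|\sum_i a_ie_i\|$, which paired with the automatic $\|\sum_i a_ie_i\|\le\sum_i|a_i|$ gives $X\cong\ell_1$. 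Here is where the main obstacle lives: the naive operator $e_i\mapsto y_i$ has operator norm governed by the Euclidean coordinate functional, which a priori grows with $n$, so the bare inequality $\pi_{1,1}\le C\|\cdot\|$ is not yet decisive. The fix is to feed in Dvoretzky--Rogers \cite{DR}, which quantifies how the summing norm of the identity of $\ell_2^n$ blows up like $\sqrt n$; any failure of the $\ell_1$-estimate produces an uncontrolled summing-to-operator-norm ratio, contradicting $C'$. Keeping $D$ independent of $n$ is the one genuinely hard point.

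For the conclusion that $Y$ is a Hilbert space I would, now that the domain is $\ell_1$, read the uniform $2$-summing bound as the statement that every operator $\ell_1\to Y$ is $2$-summing with control; this is the local content from which I would aim to deduce that $Y$ has cotype $2$, while the matching type $2$ estimate I would obtain by testing operators into the almost-Euclidean sections of $Y$ supplied again by Dvoretzky (and, on the $\ell_1$ side, by the $1$-summing strength of the hypothesis in the spirit of Grothendieck's theorem). With both type $2$ and cotype $2$ in hand, Kwapie\'n's theorem identifies $Y$ with a Hilbert space, and Grothendieck's theorem reappears as the converse check that $(\ell_1,\text{Hilbert})$ indeed satisfies the hypothesis (cf. \cite{DJT, LP}). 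As in the $\ell_1$ step, the assembly of the finite-dimensional inequalities into genuine isomorphisms is routine once the local estimates are established with dimension-free constants; that uniformity, and nothing else, is where I expect the real work to be.
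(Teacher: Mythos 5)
First, a remark on scope: the paper never proves this statement. Theorem~\ref{uyy} appears in the historical survey of Section~3 as a quoted classical result of Lindenstrauss--Pe\l czy\'{n}ski \cite{LP}, so there is no internal proof to compare against, and your proposal must be judged on its own mathematics. Its soft layer is correct: completeness of $\Pi_{1,1}(X,Y)$ under $\pi_{1,1}$ plus the closed graph (or open mapping) theorem does give a uniform constant $C$ with $\pi_{1,1}(u)\le C\|u\|$; the inclusion $\Pi_{1,1}\subset\Pi_{2,2}$ is correct; and the unconditionality identity $\sup_{\varphi\in B_{X^{\ast}}}\sum_{i}\vert\varphi(a_{i}e_{i})\vert\le K\,\Vert\sum_{i}a_{i}e_{i}\Vert$ is exactly the right way to feed basis vectors into the summing inequality. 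The problem is that the two steps you yourself flag as ``the genuinely hard point'' are left unproved, and the mechanisms you propose for them do not work as described.

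For $X\cong\ell_{1}$: your test operator $T:e_{i}\mapsto y_{i}$, with $(y_{i})_{i\le n}$ $2$-equivalent to the $\ell_{2}^{n}$-basis, cannot be uniformly bounded unless the conclusion is already partly known. Indeed $\Vert\sum_{i\le n}\pm y_{i}\Vert\ge\tfrac{1}{2}\sqrt{n}$ for every choice of signs, so $\Vert T\Vert\ge\sqrt{n}\,/\,\bigl(2K\Vert\sum_{i\le n}e_{i}\Vert\bigr)$; uniform boundedness of $T$ therefore forces $\Vert\sum_{i\le n}e_{i}\Vert\gtrsim\sqrt{n}$, a lower estimate of precisely the type you are trying to establish (and false for, say, the $c_{0}$-basis, where the right-hand side grows like $\sqrt{n}$). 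The Dvoretzky--Rogers fact $\pi_{1}(\mathrm{id}_{\ell_{2}^{n}})\sim\sqrt{n}$ lives on $\ell_{2}^{n}$, not on $\mathrm{span}(e_{1},\dots,e_{n})\subset X$, and transferring it to $X$ is exactly the missing content; this is where Lindenstrauss and Pe\l czy\'{n}ski do real work (via Grothendieck-inequality machinery), not a routine uniformity check. For the Hilbertian nature of $Y$, the type-$2$ step fails in a stronger sense: almost-Euclidean sections exist in \emph{every} infinite-dimensional space by Dvoretzky's theorem, including spaces of trivial type such as $\ell_{1}$, so ``testing operators into the Euclidean sections of $Y$'' cannot detect type, and the type of $Y$ is not visible from operators \emph{into} $Y$ in any direct way. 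The standard argument avoids type and Kwapie\'{n} altogether: every separable subspace $Z\subset Y$ is a quotient of $\ell_{1}$, say $q:\ell_{1}\to Z$; the composite $\ell_{1}\to Z\hookrightarrow Y$ is $2$-summing with $\pi_{2}\le C'$, hence by Pietsch factorization it factors through a Hilbert space, and the open mapping theorem then exhibits $Z$ as isomorphic to a quotient of that Hilbert space with constant controlled by $C'$, uniformly in $Z$; this settles separable $Y$ outright and the general case by a routine local argument. As written, your proposal is an outline whose two admitted gaps coincide with the entire content of the theorem.
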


\begin{theorem}
[Pietsch-Domination Theorem]\label{ppk}If $X$ and $Y$ are Banach spaces, a
continuous linear operator $T:X\rightarrow Y$ is absolutely $(p,p)$-summing if
and only if there is a constant $C>0$ and a Borel probability measure $\mu$ on
the closed unit ball of the dual of $X,$ $\left(  B_{X^{\ast}},\sigma(X^{\ast
},X)\right)  ,$ such that%
\[
\left\Vert T(x)\right\Vert \leq C\left(  \int_{B_{X^{\ast}}}\left\vert
\varphi(x)\right\vert ^{p}d\mu\right)  ^{\frac{1}{p}}.
\]
\bigskip
\end{theorem}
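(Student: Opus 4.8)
The plan is to prove the classical Pietsch Domination Theorem (\thmref{ppk}). The strategy is the standard Hahn--Banach/separation argument combined with a minimax principle, and I would first observe that the hard implication is that summability forces the existence of the dominating measure; the converse is a one-line application of the triangle inequality for integrals together with the fact that $\sup_\varphi |\varphi(x_j)|^p = \|x_j\|^p$ is controlled by the weak $\ell_p$ norm.

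For the nontrivial direction, suppose $T$ is absolutely $(p,p)$-summing with constant $C$. I would work inside the space $C(K)$ of continuous functions on the compact Hausdorff space $K = (B_{X^\ast}, \sigma(X^\ast,X))$. For each finite family $x_1,\dots,x_m \in X$ define the function
\[
g_{(x_j)}(\varphi) = \sum_{j=1}^m \Bigl( \|T(x_j)\|^p - C^p |\varphi(x_j)|^p \Bigr) \in C(K).
\]
Let $\mathcal{Q}$ be the convex cone of all such functions as the family ranges over all finite subsets of $X$. The summing hypothesis, rewritten as $\sum_j \|T(x_j)\|^p \le C^p \sup_\varphi \sum_j |\varphi(x_j)|^p$, says precisely that no $g_{(x_j)}$ can be strictly positive everywhere on $K$; equivalently, $\mathcal{Q}$ is disjoint from the open cone $P$ of strictly positive functions in $C(K)$.

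Next I would separate $\mathcal{Q}$ from $P$. Both are convex, $P$ is open, and they are disjoint, so the Hahn--Banach separation theorem yields a nonzero bounded linear functional on $C(K)$, i.e., by Riesz representation a signed regular Borel measure, separating them. A short argument shows this functional is nonnegative on $P$ (hence a positive measure $\mu$, normalizable to a probability measure) and nonpositive on $\mathcal{Q}$. Applying the separating inequality to the single-point family $g_{(x)}(\varphi) = \|T(x)\|^p - C^p|\varphi(x)|^p$ gives
\[
\|T(x)\|^p \le C^p \int_{B_{X^\ast}} |\varphi(x)|^p \, d\mu(\varphi),
\]
which is the desired domination after taking $p$-th roots.

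The main obstacle is the separation step: one must verify that the separating functional can be taken to be a genuine \emph{positive} measure rather than just a signed one, and that the normalization to a probability measure is legitimate (this uses that the constant function $1$ lies in the closure of $P$ and pairs positively with $\mu$). A cleaner route that avoids delicate cone arguments is to invoke Ky Fan's minimax lemma directly: the quantity $\inf_\mu \sup_{(x_j)} \sum_j \bigl(\|T(x_j)\|^p - C^p\int|\varphi(x_j)|^p d\mu\bigr)$, over probability measures $\mu$ on $K$, can have its $\inf$ and $\sup$ interchanged because the payoff is bilinear (affine in $\mu$, convex in the cone of families), and the summing hypothesis forces the min-max value to be $\le 0$, producing the measure $\mu$ realizing the domination. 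Either way the heart of the matter is the convexity/compactness bookkeeping that legitimizes extracting a probability measure from the inequality.
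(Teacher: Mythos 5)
Your proposal is correct, and your primary route is genuinely different from the paper's. The paper states Theorem \ref{ppk} as a classical fact and obtains it only as a special case of its general Theorem \ref{gpdt} (take $t=r=1$, $K=B_{X^{\ast}}$, $G=\mathbb{K}$, and, to satisfy the homogeneity hypotheses (\ref{novaq}), the weighted choices $R(\varphi,x,b)=\vert b\vert\,\vert\varphi(x)\vert$, $S(T,x,b)=\vert b\vert\,\Vert T(x)\Vert$), and the proof the paper actually writes goes through Ky Fan's Lemma: for each finite family one forms an affine continuous function on the compact convex set of Borel probability measures on $K$, uses a Dirac measure at a maximizing $\varphi$ (compactness plus weak-star continuity of $\varphi\mapsto\varphi(x_{j})$) to check that each such function is $\leq0$ somewhere, lets Ky Fan produce one measure at which \emph{all} of them are $\leq0$, and reads off the domination from the case $m=1$. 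That is exactly the minimax alternative you sketch in your last paragraph. Your main argument instead separates the convex cone $\mathcal{Q}$ of functions $g_{(x_{j})}$ from the open cone $P$ of strictly positive functions in $C(K)$; this is the standard textbook proof (essentially the one in \cite{DJT}) and is complete as sketched. The details worth writing out are: $\mathcal{Q}$ is a cone because $\lambda g_{(x_{j})}=g_{(\lambda^{1/p}x_{j})}$ for $\lambda>0$; since $0\in\mathcal{Q}$ and $P$ is a cone, the separation level can be taken to be $0$, so the separating functional is $\geq0$ on $P$, hence on all nonnegative functions, hence is a nonzero positive regular Borel measure by Riesz representation, and $\mu(K)>0$ permits normalization. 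As for what each approach buys: Hahn--Banach separation is elementary and self-contained in this single-measure situation, but it extracts one measure from one separation in one $C(K)$ and does not adapt readily to the paper's abstract setting, where $t$ measures on different compacts $K_{1},\ldots,K_{t}$ must be produced \emph{simultaneously} to get a product-type domination; there the Ky Fan argument on $P(K_{1})\times\cdots\times P(K_{t})$, combined with the weighted arithmetic--geometric mean inequality (the paper's Lemma \ref{yy}), is what carries the day. One minor correction: in the converse direction the tool is not a triangle inequality but the interchange of sum and integral together with $\int_{K}h\,d\mu\leq\sup_{K}h$ for a probability measure $\mu$; this is the ``canonical'' direction the paper omits.
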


An immediate consequence of the Pietsch Domination Theorem is that, for $1\leq
r\leq s<\infty,$ every absolutely $(r,r)$-summing operator is absolutely
$(s,s)$-summing. However a more general result is valid. As mentioned in the
first section, this result is essentially due to Kwapie\'{n} (\cite{K22}):

\begin{theorem}
[Inclusion Theorem]\label{IT} If $X$ and $Y$ are Banach spaces and
\begin{equation}%
\begin{array}
[c]{c}%
p_{j}\leq q_{j}\text{ for }j=1,2,\\
1\leq p_{1}\leq p_{2}<\infty,\\
1\leq q_{1}\leq q_{2}<\infty,\\
\frac{1}{p_{1}}-\frac{1}{q_{1}}\leq\frac{1}{p_{2}}-\frac{1}{q_{2}},
\end{array}
\label{abod}%
\end{equation}
then%
\begin{equation}
\Pi_{q_{1},p_{1}}\left(  X,Y\right)  \subset\Pi_{q_{2},p_{2}}\left(
X,Y\right)  . \label{II}%
\end{equation}

\end{theorem}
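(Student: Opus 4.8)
The plan is to recognize Theorem~\ref{IT} as the linear incarnation of the Inclusion Principle and to run the argument of Theorem~\ref{yio} with the abstract data specialized to the summing setting: take $Z=X$, $W=B_{X^{\ast}}$ (weak-star), $V$ a single point, $R(x,\varphi)=\left\vert \varphi(x)\right\vert$ and $S(u,x,\cdot)=\left\Vert u(x)\right\Vert$. Assume $u\in\Pi_{q_{1},p_{1}}(X,Y)$. The case $p_{1}=p_{2}$ is immediate: then (\ref{abod}) forces $q_{1}\leq q_{2}$ and the inclusion is just $\ell_{q_{1}}\hookrightarrow\ell_{q_{2}}$ applied coordinatewise, so I would assume $p_{1}<p_{2}$ throughout.

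First I would introduce weights. Because $u$ and each $\varphi$ are homogeneous, applying the $(q_{1},p_{1})$-summing inequality to the vectors $\lambda_{j}x_{j}$ yields, for \emph{all} positive reals $\lambda_{1},\dots,\lambda_{m}$, the weighted estimate $(\sum\lambda_{j}^{q_{1}}\left\Vert u(x_{j})\right\Vert ^{q_{1}})^{1/q_{1}}\leq C\sup_{\varphi}(\sum\lambda_{j}^{p_{1}}\left\vert \varphi(x_{j})\right\vert ^{p_{1}})^{1/p_{1}}$. This is the step where linearity does for free what the Mendel--Schechtman density argument accomplishes in the nonlinear Theorem~\ref{yio}, and it is precisely why the adjustment parameter will collapse. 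Next I would set $\tfrac{1}{p}=\tfrac{1}{p_{1}}-\tfrac{1}{p_{2}}$ and $\tfrac{1}{q}=\tfrac{1}{q_{1}}-\tfrac{1}{q_{2}}$; the fourth line of (\ref{abod}) says exactly $\tfrac{1}{p}\leq\tfrac{1}{q}$, and one checks $1\leq q\leq p<\infty$.

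The computation then mirrors the proof of Theorem~\ref{yio}. I would choose $\lambda_{j}=\left\Vert u(x_{j})\right\Vert ^{q_{2}/q}$, the exact analogue of the choice $\lambda_{j}(v)=S(f,z_{j},v)^{q_{2}/q}$ there, so that $\lambda_{j}^{q_{1}}\left\Vert u(x_{j})\right\Vert ^{q_{1}}=\left\Vert u(x_{j})\right\Vert ^{q_{2}}$. Then I would apply H\"{o}lder's inequality to the right-hand sum with the conjugate exponents $p/p_{1}$ and $p_{2}/p_{1}$ (conjugate because $\tfrac{1}{p}+\tfrac{1}{p_{2}}=\tfrac{1}{p_{1}}$), pull the $\left\Vert u(x_{j})\right\Vert$-factor outside the supremum over $\varphi$, and invoke the inclusion $\ell_{q_{2}}\hookrightarrow\ell_{pq_{2}/q}$ (valid since $pq_{2}/q\geq q_{2}$) to replace $(\sum\left\Vert u(x_{j})\right\Vert ^{pq_{2}/q})^{1/p}$ by $(\sum\left\Vert u(x_{j})\right\Vert ^{q_{2}})^{1/q}$.

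The hard part is not any single inequality but the exponent bookkeeping that makes everything collapse. After transposing the common factor $\sum\left\Vert u(x_{j})\right\Vert ^{q_{2}}$ to the left, the surviving outer power is $\tfrac{1}{q_{1}}-\tfrac{1}{q}=\tfrac{1}{q_{2}}$, so the estimate reduces to $(\sum\left\Vert u(x_{j})\right\Vert ^{q_{2}})^{1/q_{2}}\leq C\sup_{\varphi}(\sum\left\vert \varphi(x_{j})\right\vert ^{p_{2}})^{1/p_{2}}$, i.e. $u\in\Pi_{q_{2},p_{2}}(X,Y)$. What I would watch most carefully is that the four arithmetic relations in (\ref{abod}) are exactly what force the outer exponents to recombine as $1/q_{2}$ on the left and $1/p_{2}$ on the right with \emph{no} leftover factor; this sharp cancellation---in contrast with the surviving $\alpha=q_{2}p_{1}/(q_{1}p_{2})$ of Theorem~\ref{yio}---is the concrete meaning of the ``perfect generalization'' anticipated in Remark~\ref{remalpha}.
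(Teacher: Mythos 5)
Your argument is correct: the homogeneity-weighted inequality, the choice $\lambda_{j}=\Vert u(x_{j})\Vert^{q_{2}/q}$, H\"{o}lder's inequality with the conjugate exponents $p/p_{1}$ and $p_{2}/p_{1}$, the inclusion $\ell_{q_{2}}\hookrightarrow\ell_{pq_{2}/q}$, and the final cancellation $\frac{1}{q_{1}}-\frac{1}{q}=\frac{1}{q_{2}}$ all check out (add only the trivial remark that one may assume $\sum_{j}\Vert u(x_{j})\Vert^{q_{2}}\neq0$ before dividing by the common factor). For the comparison, note that the paper gives no proof of Theorem \ref{IT} at all: it quotes it as Kwapie\'{n}'s classical result and points to \cite[p.~198]{DJT} for a proof. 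What you wrote is essentially that classical argument, and---more relevantly for this paper---it is exactly the computation the authors carry out in proving Theorem \ref{red}: their hypothesis that $R$ and $S$ are ``multiplicative in the variable $Z$'' is the abstract name for the homogeneity you exploit, their $\lambda_{j}(v)=S(f,z_{j},g_{j},v)^{q_{2}/q}$ is your $\lambda_{j}$, and their closing display is your closing display. Indeed, with your choices ($Z=X$, $W=B_{X^{\ast}}$, $V$ and $G$ singletons, $R(x,\varphi)=\vert\varphi(x)\vert$, $S(u,x)=\Vert u(x)\Vert$) Theorem \ref{IT} drops out of Theorem \ref{red} as an immediate corollary, so you could have cited that theorem instead of redoing the computation. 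Your diagnosis of why the adjustment parameter collapses---linearity places weights with exponent $q_{1}$ on the left and $p_{1}$ on the right, whereas the repetition/density argument behind Theorem \ref{yio} only produces equal weights on both sides---is precisely the difference between Theorems \ref{yio} and \ref{red}, and is the correct reading of Remark \ref{remalpha}.
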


The end of the 60's was also the time of the birth of the notion of
\emph{type} and \emph{cotype}. It probably began to be conceived in the
S\'{e}minaire Laurent Schwartz, and after important contributions by J.
Hoffmann-J\o rgensen \cite{HJ}, B. Maurey \cite{Ma2}, S. Kwapie\'{n}
\cite{K22}, and H. Rosenthal \cite{Ro}, the concept was formalized by B.
Maurey and G. Pisier \cite{pisier}.

Since B. Maurey and G. Pisier's seminal paper \cite{pisier}, the connection of
the notion of cotype and the concept of absolutely summing operators become
clear. In 1992, M. Talagrand \cite{T1} proved very deep results complementing
previous results of B. Maurey and G. Pisier showing that cotype $2$ spaces
have indeed a special behavior in the theory of absolutely summing operators:

\begin{theorem}
[Maurey-Pisier and Talagrand]If a Banach space $X$ has cotype $q$, then
$id_{X}$ is absolutely $(q,1)$-summing. The converse is true, except for $q=2$.
\end{theorem}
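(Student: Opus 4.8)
The plan is to separate the statement into its two implications, which are of very different depth, and to treat the exceptional value $q=2$ on its own. For the forward implication, suppose $X$ has cotype $q$, so that there is a constant $C_{q}>0$ with
\[
\left(  \sum_{j=1}^{m}\left\Vert x_{j}\right\Vert ^{q}\right)  ^{1/q}\leq
C_{q}\left(  \int_{0}^{1}\left\Vert \sum_{j=1}^{m}r_{j}(t)x_{j}\right\Vert
^{2}dt\right)  ^{1/2}
\]
for all finite families $x_{1},\dots,x_{m}\in X$, where $(r_{j})$ denotes the sequence of Rademacher functions. The key remark is that for each fixed $t$ one has, by duality and $\left\vert r_{j}(t)\right\vert =1$,
\[
\left\Vert \sum_{j=1}^{m}r_{j}(t)x_{j}\right\Vert =\sup_{\varphi\in B_{X^{\ast
}}}\left\vert \sum_{j=1}^{m}r_{j}(t)\varphi(x_{j})\right\vert \leq\sup
_{\varphi\in B_{X^{\ast}}}\sum_{j=1}^{m}\left\vert \varphi(x_{j})\right\vert .
\]
Integrating in $t$ and inserting this bound into the cotype inequality gives
\[
\left(  \sum_{j=1}^{m}\left\Vert x_{j}\right\Vert ^{q}\right)  ^{1/q}\leq
C_{q}\sup_{\varphi\in B_{X^{\ast}}}\sum_{j=1}^{m}\left\vert \varphi
(x_{j})\right\vert ,
\]
which is precisely the statement that $id_{X}$ is absolutely $(q,1)$-summing. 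This argument is soft and is valid for every admissible cotype exponent.

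The converse is where I expect the genuine obstacle to lie, since the averaging step above cannot be reversed: the weak $\ell_{1}$ bound on the right provides no direct lower control on the Rademacher averages that define cotype. For $q>2$ my plan is to follow the route of Maurey and Pisier. One uses their structural theorem asserting that the cotype exponent $q_{X}=\inf\{q:X\text{ has cotype }q\}$ forces $\ell_{q_{X}}$ to be finitely representable in $X$, and then couples the hypothesis that $id_{X}$ is $(q,1)$-summing with Gaussian, and more generally $p$-stable, random series in order to propagate the summability into a genuine cotype estimate. This is exactly the part that requires the full strength of the Maurey--Pisier theory, and no manipulation of summing operators of the kind developed earlier in this paper can substitute for it.

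For the remaining value $q=2$ I would not attempt to prove the converse, because it is false. Here the plan is instead to record Talagrand's construction of a Banach space $X$ for which $id_{X}$ is $(2,1)$-summing yet $X$ fails to have cotype $2$; this counterexample is the precise content of the clause ``except for $q=2$'' and shows that the equivalence breaks down exactly at the Hilbertian exponent. In summary, the forward implication and the $q=2$ counterexample are comparatively elementary, while the reverse implication for $q>2$ carries the entire weight of the theorem and constitutes the main difficulty.
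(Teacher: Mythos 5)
The paper offers no proof of this statement at all: it is quoted as background in the survey section, with the forward implication credited to Maurey--Pisier \cite{pisier} and everything concerning the converse to Talagrand \cite{T1}. So your proposal must stand or fall on its own. Your forward direction is correct and complete: since $|r_j(t)|=1$, Hahn--Banach gives the pointwise bound $\left\Vert \sum_{j=1}^m r_j(t)x_j\right\Vert \leq \sup_{\varphi\in B_{X^{\ast}}}\sum_{j=1}^m\left\vert \varphi(x_j)\right\vert$ for every $t$, and inserting this into the cotype inequality yields exactly the $(q,1)$-summing estimate for $id_X$. This is the standard soft argument, and it is the only part of the theorem that admits such an argument.

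The genuine gap is the converse for $q>2$, and it is not merely an omitted routine verification: the route you propose cannot close it. The Maurey--Pisier machinery (finite representability of $\ell_{q_X}$ in $X$, stable random series) yields only this: if $id_X$ is $(q,1)$-summing, then $\ell_s$ cannot be finitely representable in $X$ for any $s>q$, because the unit vector basis of $\ell_s^n$ gives $\pi_{q,1}(id_{\ell_s^n})\geq n^{1/q-1/s}\rightarrow\infty$; hence $q_X\leq q$, i.e. $X$ has cotype $q+\varepsilon$ for every $\varepsilon>0$. Since cotype need not be attained at the infimum of the cotype exponents, this falls strictly short of cotype $q$. Passing from the $(q,1)$-summing hypothesis to cotype \emph{exactly} $q$ is precisely Talagrand's 1992 theorem \cite{T1}, proved by entirely different and much deeper probabilistic methods; it is not a consequence of the Maurey--Pisier structure theory, so attributing that step to them mislocates the whole difficulty of the theorem. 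Your handling of $q=2$ is right in substance (the converse is indeed false there, and Talagrand's counterexample is the correct reference), but it too is invoked rather than proved. As written, your text establishes the forward implication only.
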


In the last two decades the interest of the theory was moved to the nonlinear
setting although there are still some challenging questions being investigated
in the linear setting (see \cite{unc, PellZ}). For example, recent results
from \cite{PellZ} complements the Lindenstrauss-Pe\l czy\'{n}ski Theorem
\ref{uyy} (below $\cot X$ denotes the infimum of the cotypes assumed by $X$):

\begin{theorem}
(\cite{PellZ}) Let $X$ and $Y$ be infinite-dimensional Banach spaces.

(i) If $\Pi_{1,1}(X,Y)=\mathcal{L}(X,Y)$ then $\cot X=\cot Y=2$.

(ii) If $2\leq r<\cot Y$ and $\Pi_{q,r}(X,Y)=\mathcal{L}(X,Y),$ then
$\mathcal{L}(\ell_{1},\ell_{\cot Y})=\Pi_{q,r}(\ell_{1},\ell_{\cot Y})$.
\end{theorem}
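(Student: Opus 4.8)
The plan is to treat both parts through a single mechanism: reduce the abstract hypothesis $\Pi_{q,r}(X,Y)=\mathcal{L}(X,Y)$ to a uniform quantitative estimate, and then read off the consequences from the local (finite-dimensional) structure of $X$ and $Y$. First I would observe that, since the formal inclusion $\Pi_{q,r}(X,Y)\hookrightarrow\mathcal{L}(X,Y)$ is always continuous, the closed graph theorem upgrades the set-theoretic equality to a quantitative one: there is a constant $C>0$ with $\pi_{q,r}(u)\le C\|u\|$ for every $u\in\mathcal{L}(X,Y)$. The whole difficulty is then to feed into this single inequality the ``worst'' operators that $X$ and $Y$ can support. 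The raw material is supplied by the Maurey--Pisier theorem, which guarantees that $\ell_{\cot X}$ is finitely representable in $X$ and $\ell_{\cot Y}$ in $Y$, together with Dvoretzky's theorem, which places almost-isometric copies of $\ell_2^n$ in both spaces; that cotype is at all visible through summing behaviour is morally the content of the Maurey--Pisier--Talagrand characterization quoted above.

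For part (i) I would argue by contradiction. Assuming $\cot Y=s>2$, I would use near-isometric embeddings $j_n:\ell_s^n\to Y$ and Euclidean subspaces $\ell_2^n\subset X$ to manufacture operators $X\to Y$ that, up to extension, realize the formal identities $\mathrm{id}:\ell_2^n\to\ell_s^n$: the operators are produced by prescribing values on the Euclidean subspace and extending the coordinate functionals by Hahn--Banach. The $(1,1)$-summing norms of these formal identities grow polynomially in $n$ by the Bennett--Carl asymptotics, and the crux is that this growth strictly outpaces the combined loss coming from the operator norm and the Hahn--Banach extension, contradicting the uniform bound $\pi_{1,1}\le C\|\cdot\|$; this forces $s=2$. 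The equality $\cot X=2$ I would obtain by the dual version of the same scheme, passing to adjoints $u^{*}:Y^{*}\to X^{*}$ so that the cotype of $X$ is seen through the type of $X^{*}$, and again comparing summing and operator norms of formal identities in the dual spaces.

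For part (ii) I would split the conclusion into two transfers. The target transfer is routine: writing $s=\cot Y$ and using the embeddings $j_n:\ell_s^n\to Y$ together with the ideal property of $\Pi_{q,r}$, the uniform bound on $\mathcal{L}(X,Y)$ descends to a uniform bound on $\mathcal{L}(X,\ell_s^n)$, and a limiting argument gives $\Pi_{q,r}(X,\ell_s)=\mathcal{L}(X,\ell_s)$. The source transfer, from $X$ to $\ell_1$, is where I expect the real work to lie, and it is the step I would flag as the main obstacle: there is no reason for $\ell_1$ to be finitely representable in $X$, so one cannot merely copy the previous argument. Instead I would exploit the special position of $\ell_1$ --- any bounded sequence defines an operator out of $\ell_1$, and $\ell_1$ enjoys the metric lifting (projectivity) property --- to realize an arbitrary $T:\ell_1\to\ell_s^n$ with $\|T\|\le 1$ as a factor of an operator in $\mathcal{L}(X,\ell_s)$ with controlled norms, the hypotheses $2\le r<s$ being precisely what keeps the relevant finite-dimensional summing constants bounded along the construction. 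The delicate accounting is to make the norm losses in this factorization independent of $n$; once that is achieved, the ideal property yields $\pi_{q,r}(T)\le C'\|T\|$ uniformly, hence $\mathcal{L}(\ell_1,\ell_s)=\Pi_{q,r}(\ell_1,\ell_s)$. Throughout, the Inclusion Theorem (\thmref{IT}) is the tool that lets me trade one pair of indices for another when matching the constructed operators to the Bennett--Carl estimates.
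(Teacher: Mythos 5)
First, a point of order: the paper does not prove this theorem at all --- it is quoted from \cite{PellZ} as background complementing Theorem~\ref{uyy} --- so your proposal can only be judged on its own merits, and there it has genuine gaps.

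In part (i) the step you yourself call the crux is false. Your plan composes a map $P_n\colon X\to\ell_2^n$ (Hahn--Banach extensions of the coordinate functionals of a Dvoretzky copy $E_n\subset X$), the formal identity $\ell_2^n\to\ell_s^n$, and a Maurey--Pisier embedding $j_n\colon\ell_s^n\to Y$, and claims $\pi_{1,1}(u_n)$ grows strictly faster than $\|u_n\|$. It cannot. Extending the $n$ coordinate functionals only gives $\|P_n\|\le(1+\epsilon)\sqrt{n}$, and this order is genuinely attained for admissible $X$: for $X=\ell_1$, Grothendieck's theorem forces every projection of $\ell_1$ onto an $\ell_2^n$-copy to have norm at least $c\sqrt{n}/K_G$ (because such a projection is $1$-summing with $\pi_1\le K_G\|\cdot\|$, while its restriction to the copy is the identity, whose $\pi_1$-norm is $\asymp\sqrt{n}$). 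On the other side, the ``gain'' is capped at the same order: since the inclusion $\ell_2^n\to\ell_s^n$ has norm one for $s\ge2$, the ideal property gives $\pi_{1,1}(\mathrm{id}\colon\ell_2^n\to\ell_s^n)\le\pi_{1,1}(\mathrm{id}_{\ell_2^n})\asymp\sqrt{n}$, and testing on the unit vector basis (norms sum to $n$, weak-$\ell_1$ norm equal to $\sqrt{n}$) shows it is also $\ge\sqrt{n}$. So $\pi_{1,1}(u_n)/\|u_n\|$ stays bounded and no contradiction with $\pi_{1,1}\le C\|\cdot\|$ ever appears. There is a structural reason no argument of this shape can work: the quantity $\pi_{1,1}(\mathrm{id}\colon\ell_2^n\to\ell_s^n)$ is nonincreasing in $s$, hence maximal at $s=2$, where the coincidence hypothesis is actually satisfiable ($\Pi_{1,1}(\ell_1,\ell_2)=\mathcal{L}(\ell_1,\ell_2)$ is Grothendieck's theorem, quoted in the paper); a chain of inequalities yielding a contradiction for $s>2$ would therefore yield one a fortiori for the pair $(\ell_1,\ell_2)$, which is absurd. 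Your dual scheme for $\cot X=2$ has a separate flaw: cotype of $X$ is not visible as type of $X^{*}$ ($\ell_1$ has cotype $2$ while $\ell_\infty$ has only trivial type), and adjoints of $(1,1)$-summing operators need not be summing (the inclusion $\ell_1\to\ell_2$ is $1$-summing, its adjoint $\ell_2\to\ell_\infty$ is not). A workable route to (i) avoids vector-valued extensions entirely: one uses rank-one sums $u=\sum_j\varphi_j\otimes y_j$ with each functional extended individually (no $\sqrt{n}$ loss), deduces that $\mathrm{id}_X$ and $\mathrm{id}_Y$ are $(2,1)$-summing-type maps, and concludes via the Maurey--Pisier--Talagrand characterization.

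In part (ii) your target transfer (Maurey--Pisier embeddings, the norm-one truncations of $\ell_s$, injectivity of $\Pi_{q,r}$) is correct, but the source transfer --- which you rightly identify as the real work --- is a restatement of the problem, not an argument. To apply the ideal property you must factor $T\colon\ell_1\to\ell_s^n$ as $B\circ u\circ A$ with $A\colon\ell_1\to X$ and $u\in\mathcal{L}(X,\ell_s)$, which requires an operator defined on \emph{all} of $X$, taking prescribed values on $A(\ell_1)$, with norm bounded independently of $n$; that is exactly an extension problem into $\ell_s^n$ from a subspace of a general $X$, i.e., the same complementation obstruction that defeats your part (i), where the unavoidable loss is again of order $\sqrt{n}$ in the worst case. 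The lifting property of $\ell_1$ does not address this: it lifts operators out of $\ell_1$ through \emph{quotient maps onto the target}, and no quotient map occurs anywhere in your factorization. So the decisive step of (ii) is missing, and no amount of ``delicate accounting'' can make the losses uniform in $n$ within this scheme.
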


The extension of the classical linear theory of absolutely summing operators
to the multilinear setting is very far from being a mere exercise of
generalization with expected results obtained by induction. In fact, some
multilinear approaches are simple but there are several delicate questions
related to the multilinear extensions of absolutely summing operators. Some
illustrative examples and applications can be seen in \cite{Acosta, ag, Def2,
Perr, ppp}). For non-multilinear approaches we refer to \cite{BBJP, Junek,
Nach, MP}).

The advance of the nonlinear theory of absolutely summing operators leads to
the search for nonlinear versions of the Pietsch Domination-Factorization
Theorem (see, for example, \cite{AMe, JFA, BPR, FaJo, Geiss, SP}). Recently,
in \cite{BPRn} (see also an addendum in \cite{psjmaa} and \cite{psmz} for a
related result), an abstract unified approach to Pietsch-type results was
presented as an attempt to show that all the known Pietsch-type theorems were
particular cases of a unified general version. However, these approaches were
not complete, as we will show later.

\subsection{Applications to the theory of absolutely summing multilinear
operators}

The multilinear theory of absolutely summing mappings seems to have its
starting point in \cite{BH,LLL} but only in the 1980's it gained more
attention, motivated by A. Pietsch's work \cite{PPPP}; recently some nice
results and applications have appeared, mainly related to the notion of fully
or multiple summability (see \cite{Acosta, BBJP2, Na, Def, Def2} and
references therein). This section will actually show that for multilinear
mappings there exists an improved version of the Inclusion Principle (we just
need to explore the multi-linearity).

For technical reasons the present abstract setting is slightly different from
the one of the previous section. Let $X$, $Y,$ $V,$ $G,$ $W$ be (arbitrary)
non-void sets, $Z$ a vector space and $\mathcal{H}\subset Map(X,Y)$. Consider
the arbitrary mappings
\begin{align*}
R\colon Z\times G\times W  &  \longrightarrow\lbrack0,\infty)\\
S\colon\mathcal{H}\times Z\times G\times V  &  \longrightarrow\lbrack
0,\infty).
\end{align*}
Let $1\leq p\leq q<\infty$ and $\alpha\in\mathbb{R}$. Suppose that%
\[
\sup_{w\in W}\sum_{j=1}^{m}R\left(  z_{j},g_{j},w\right)  ^{p}<\infty\text{
and }\sup_{v\in V}\sum_{j=1}^{m}S(f,z_{j},g_{j},v)^{q}<\infty
\]
for every positive integer $m$ and $z_{1},...,z_{m}\in Z$ and $g_{1}%
,...,g_{m}\in G.$ We will say that $f\in\mathcal{H}$ is $(q,p)$-abstract
$(R,S)$-summing (notation $f\in RS_{(q,p)}$) if there is a constant $C>0$ so
that%
\begin{equation}
\left(  \sup_{v\in V}\sum_{j=1}^{m}S(f,z_{j},g_{j},v)^{q}\right)  ^{\frac
{1}{q}}\leq C\left(  \sup_{w\in W}\sum_{j=1}^{m}R\left(  z_{j},g_{j},w\right)
^{p}\right)  ^{1/p},
\end{equation}
for all $z_{1},\ldots,z_{m}\in Z,$ $g_{1},...,g_{m}\in G$ and $m\in\mathbb{N}%
$. We will say that $S$ and $R$ are multiplicative in the variable $Z$ if%
\begin{align*}
R\left(  \lambda z,g,w\right)   &  =\left\vert \lambda\right\vert R\left(
z,g,w\right)  ,\\
S\left(  f,\lambda z,g,v\right)   &  =\left\vert \lambda\right\vert S\left(
f,z,g,v\right)  .
\end{align*}

\begin{theorem}
\label{red}Let $p_{j}$ and $q_{j}$ be as in (\ref{abod}) and suppose that $S$
and $R$ are multiplicative in the variable $Z.$ Then%
\[
RS_{(q_{1},p_{1})}\subset RS_{\left(  q_{2},p_{2}\right)  }.\text{ }%
\]

\end{theorem}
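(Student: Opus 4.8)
The plan is to exploit the multiplicativity hypothesis to remove the adjustment parameter $\alpha$ that was unavoidable in \thmref{yio}. The key structural difference here is that $Z$ is a genuine vector space and both $R$ and $S$ are positively homogeneous in the $Z$-variable, so I can \emph{normalize} the vectors $z_j$ rather than merely duplicate them. First I would assume $f\in RS_{(q_1,p_1)}$, fix $m$ and fix $z_1,\dots,z_m\in Z$ and $g_1,\dots,g_m\in G$, and set up the scaling constants $\lambda_j\geq 0$ (to be chosen) and consider the rescaled vectors $\lambda_j z_j$. By multiplicativity, $R(\lambda_j z_j,g_j,w)=\lambda_j R(z_j,g_j,w)$ and $S(f,\lambda_j z_j,g_j,v)=\lambda_j S(f,z_j,g_j,v)$, so plugging the rescaled data into the defining $(q_1,p_1)$-inequality yields
\[
\left(\sup_{v\in V}\sum_{j=1}^m \lambda_j^{q_1}\,S(f,z_j,g_j,v)^{q_1}\right)^{1/q_1}\leq C\left(\sup_{w\in W}\sum_{j=1}^m \lambda_j^{p_1}\,R(z_j,g_j,w)^{p_1}\right)^{1/p_1}.
\]
This is the analogue of the weight trick from the proof of \thmref{yio}, but obtained for free from homogeneity rather than from the Mendel--Schechtman density argument; crucially it holds for arbitrary nonnegative reals $\lambda_j$ in one stroke.

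Next I would define $p,q$ by the conjugate-type relations
\[
\frac{1}{p}=\frac{1}{p_1}-\frac{1}{p_2},\qquad \frac{1}{q}=\frac{1}{q_1}-\frac{1}{q_2},
\]
exactly as in the earlier proof, so that the hypothesis $\frac{1}{p_1}-\frac{1}{q_1}\leq\frac{1}{p_2}-\frac{1}{q_2}$ translates into $1\leq q\leq p<\infty$. The strategy is to choose the weights $\lambda_j$ so that the left-hand supremum is driven toward $\sum_j S(f,z_j,g_j,v)^{q_2}$ and the right-hand side toward $\sum_j R(z_j,g_j,w)^{p_2}$, while the leftover exponents on the $\lambda_j$ assemble into an $\ell_q$-to-$\ell_p$ comparison. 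Concretely I expect to take $\lambda_j$ proportional to a power of $S(f,z_j,g_j,v)$ (as in \thmref{yio}, where $\lambda_j(v)=S(f,z_j,g_j,v)^{q_2/q}$), apply H\"older's inequality on the $R$-side with the exponent pair $(p/p_1,\,p_2/p_1)$, and use the nesting $\ell_{q_1 p/p_1}\hookrightarrow \ell_q$ on the $S$-side. The payoff of homogeneity is that, after collecting the $\lambda_j$ powers, the exponent bookkeeping closes up without leaving a residual power on the final left-hand sum, so the resulting inequality reads
\[
\left(\sup_{v\in V}\sum_{j=1}^m S(f,z_j,g_j,v)^{q_2}\right)^{1/q_2}\leq C\left(\sup_{w\in W}\sum_{j=1}^m R(z_j,g_j,w)^{p_2}\right)^{1/p_2},
\]
which is precisely $f\in RS_{(q_2,p_2)}$ with the clean exponent $1/q_2$ and no adjustment $\alpha$.

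The main obstacle I anticipate is the interchange of the supremum over $v\in V$ with the choice of weights $\lambda_j=\lambda_j(v)$. Because the optimal $\lambda_j$ depend on $v$ through $S(f,z_j,g_j,v)$, the homogeneous inequality must be read for each fixed $v$ first and only then have the supremum over $v$ taken, mirroring the \textquotedblleft for every $v\in V$\textquotedblright\ phrasing in the proof of \thmref{yio}; one has to verify that after the H\"older and nesting steps the $v$-dependence on the right-hand side has been fully pushed into the $v$-free quantity $\sup_{w\in W}\sum_j R(z_j,g_j,w)^{p_2}$, so that taking $\sup_{v}$ on the left is legitimate. The secondary point to check is the degenerate case: if some $S(f,z_j,g_j,v)=0$ the corresponding weight vanishes, so one should argue by a limiting/perturbation step (or simply restrict the sum to indices with $S>0$) to keep the powers well defined. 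Apart from these bookkeeping subtleties, the argument is a direct homogenization of the Inclusion Principle, and the case $p_1=p_2$ remains trivial as before.
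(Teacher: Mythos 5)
Your proposal is correct and follows essentially the same route as the paper's proof: homogeneity replaces the repetition/density argument to yield the weighted inequality in one stroke, the same weights $\lambda_j(v)=S(f,z_j,g_j,v)^{q_2/q}$ are used, followed by H\"older with exponents $(p/p_1,p_2/p_1)$ and an $\ell$-norm nesting, with the supremum over $v$ taken only at the end and the degenerate case handled by not dividing by zero. One small bookkeeping correction: since multiplicativity puts the weights with power $p_1$ (not $q_1$) on the $R$-side, after H\"older they appear with power $p$, so the nesting needed is $\left\Vert \cdot\right\Vert _{\ell_{p}}\leq\left\Vert \cdot\right\Vert _{\ell_{q}}$ rather than $\left\Vert \cdot\right\Vert _{\ell_{q_{1}p/p_{1}}}\leq\left\Vert \cdot\right\Vert _{\ell_{q}}$ --- and this is precisely what makes the exponents close up with no adjustment $\alpha$, as you claim.
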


\begin{proof}
If $p_{1}=p_{2}=p$ the result is clear. So, let us consider $p_{1}<p_{2}$ (and
hence $q_{1}<q_{2}$)$.$ If $f\in RS_{(q_{1},p_{1})},$ \ there is a $C>0$ such
that%
\begin{equation}
\left(  \sup_{v\in V}\sum_{j=1}^{m}S(f,z_{j},g_{j},v)^{q_{1}}\right)
^{\frac{1}{q_{1}}}\leq C\sup_{w\in W}\left(  \sum_{j=1}^{m}R\left(
z_{j},g_{j},w\right)  ^{p_{1}}\right)  ^{\frac{1}{p_{1}}}, \label{yrt}%
\end{equation}
for all $z_{1},\ldots,z_{m}\in Z,$ $g_{1},...,g_{m}\in G$ and $m\in\mathbb{N}
$. Then%
\begin{equation}
\left(  \sup_{v\in V}\sum_{j=1}^{m}S(f,\lambda_{j}z_{j},g_{j},v)^{q_{1}%
}\right)  ^{\frac{1}{q_{1}}}\leq C\sup_{w\in W}\left(  \sum_{j=1}^{m}R\left(
\lambda_{j}z_{j},g_{j},w\right)  ^{p_{1}}\right)  ^{\frac{1}{p_{1}}},
\end{equation}
for all $z_{1},\ldots,z_{m}\in Z,$ $\lambda_{1},...,\lambda_{m}\in\mathbb{K}$,
$g_{1},...,g_{m}\in G$ and $m\in\mathbb{N}$. Define $p,q$ by
\[
\frac{1}{p}=\frac{1}{p_{1}}-\frac{1}{p_{2}}\text{ \ \ and \ \ }\frac{1}%
{q}=\frac{1}{q_{1}}-\frac{1}{q_{2}}.
\]
So we have $1\leq q\leq p<\infty;$ let $m\in%
\mathbb{N}
,$ $z_{1},z_{2},...,z_{m}\in Z$ and $g_{1},...,g_{m}\in G$ be fixed. For each
$j=1,...,m$, consider%
\begin{align*}
\lambda_{j}  &  :V\rightarrow\lbrack0,\infty)\\
\lambda_{j}(v)  &  :=S(f,z_{j},g_{j},v)^{\frac{q_{2}}{q}}.
\end{align*}
So, recalling that $S$ is multiplicative in $Z$, we have%
\begin{align*}
\left(  \sum_{j=1}^{m}S(f,z_{j},g_{j},v)^{q_{2}}\right)  ^{\frac{1}{q_{1}}}
&  =\left(  \sum_{j=1}^{m}S(f,\lambda_{j}(v)z_{j},g_{j},v)^{q_{1}}\right)
^{\frac{1}{q_{1}}}\\
&  \leq C\sup_{w\in W}\left(  \sum_{j=1}^{m}R\left(  \lambda_{j}(v)z_{j}%
,g_{j},w\right)  ^{p_{1}}\right)  ^{\frac{1}{p_{1}}}%
\end{align*}
for every $v\in V$. Since $R$ is multiplicative in $Z$ and, as we did before,
from H\"{o}lder's Inequality we obtain
\begin{align*}
\left(  \sum_{j=1}^{m}S(f,z_{j},g_{j},v)^{q_{2}}\right)  ^{\frac{1}{q_{1}}}
&  \leq C\sup_{w\in W}\left(  \sum_{j=1}^{m}\lambda_{j}(v)^{p_{1}}R\left(
z_{j},g_{j},w\right)  ^{p_{1}}\right)  ^{\frac{1}{p_{1}}}\\
&  \leq C\sup_{w\in W}\left[  \left(  \sum_{j=1}^{m}\lambda_{j}(v)^{p}\right)
^{\frac{p_{1}}{p}}\left(  \sum_{j=1}^{m}R\left(  z_{j},g_{j},w\right)
^{p_{2}}\right)  ^{\frac{p_{1}}{p_{2}}}\right]  ^{\frac{1}{p_{1}}}\\
&  =C\left(  \sum_{j=1}^{m}\lambda_{j}(v)^{p}\right)  ^{\frac{1}{p}}\sup_{w\in
W}\left(  \sum_{j=1}^{m}R\left(  z_{j},g_{j},w\right)  ^{p_{2}}\right)
^{\frac{1}{p_{2}}}%
\end{align*}
for every $v\in V$. Since $p\geq q$ we have $\left\Vert .\right\Vert
_{\ell_{p}}\leq\left\Vert .\right\Vert _{\ell_{q}}$ and then%
\begin{align*}
\left(  \sum_{j=1}^{m}S(f,z_{j},g_{j},v)^{q_{2}}\right)  ^{\frac{1}{q_{1}}}
&  \leq C\left(  \sum_{j=1}^{m}\lambda_{j}(v)^{q}\right)  ^{\frac{1}{q}}%
\sup_{w\in W}\left(  \sum_{j=1}^{m}R\left(  z_{j},g_{j},w\right)  ^{p_{2}%
}\right)  ^{\frac{1}{p_{2}}}\\
&  =C\left(  \sum_{j=1}^{m}S(f,z_{j},g_{j},v)^{q_{2}}\right)  ^{\frac{1}{q}%
}\sup_{w\in W}\left(  \sum_{j=1}^{m}R\left(  z_{j},g_{j},w\right)  ^{p_{2}%
}\right)  ^{\frac{1}{p_{2}}}%
\end{align*}
for every $v\in V$ and we easily conclude the proof.
\end{proof}

Let us show how the above result applies to the multilinear theory of
absolutely summing mappings. Our intention is illustrative rather than
exhaustive. From now on we will use the notation $\mathcal{L}(X_{1}%
,...,X_{n};Y)$ to represent the spaces of continuous $n$-linear mappings from
$X_{1}\times\cdots\times X_{n}$ to $Y$. For the theory of multilinear mappings
between Banach spaces we refer to \cite{Din, Mujica}. Consider the following
concepts of multilinear summability for $1\leq p\leq q<\infty$ (inspired in
\cite{CP, Dimant}):

\begin{itemize}
\item[\textbf{1.-)}] A mapping $T\in\mathcal{L}(X_{1},...,X_{n};Y)$ is
$(q,p)$-semi integral if there exists $C\geq0$ such that%
\begin{equation}
\left(  \sum\limits_{j=1}^{m}\parallel T(x_{j}^{1},...,x_{j}^{n})\parallel
^{q}\right)  ^{1/q}\leq C\left(  \underset{\varphi_{l}\in B_{X_{l}^{\ast}%
},l=1,...,n}{\sup}\sum\limits_{j=1}^{m}\mid\varphi_{1}(x_{j}^{1}%
)...\varphi_{n}(x_{j}^{n})\mid^{p}\right)  ^{1/p} \label{day}%
\end{equation}
for every $m\in\mathbb{N}$, $x_{j}^{l}\in X_{l}$ with $l=1,...,n$ and
$j=1,...,m.$ In the above situation we write $T\in\mathcal{L}_{si(q,p)}%
(X_{1},...,X_{n};Y)$).

\item[\textbf{2.-)}] A mapping $T\in\mathcal{L}(X_{1},...,X_{n};Y)$ is
strongly $(q,p)$-summing if there exists $C\geq0$ such that%
\[
\left(  \sum\limits_{j=1}^{m}\parallel T(x_{j}^{1},...,x_{j}^{n})\parallel
^{q}\right)  ^{1/q} \leq C\left(  \underset{\varphi\in B_{\mathcal{L}%
(X_{1},...,X_{n};\mathbb{K})}}{\sup}\sum\limits_{j=1}^{m}\mid\varphi(x_{j}%
^{1},...,x_{j}^{n})\mid^{p}\right)  ^{1/p}%
\]
for every $m\in\mathbb{N}$, $x_{j}^{l}\in X_{l}$ with $l=1,...,n$ and
$j=1,...,m.$ In the above situation we write $T\in\mathcal{L}_{ss(q,p)}%
(X_{1},...,X_{n};Y)$.
\end{itemize}

For both concepts there is a natural Pietsch-Domination-type theorem (see
\cite{CP,Dimant}) and as a corollary the following inclusion results hold:

\begin{proposition}
If $1\leq p\leq q<\infty$, then, for any Banach spaces $X_{1},...,X_{n},Y$,
the following inclusions hold:
\begin{align*}
\mathcal{L}_{si(p,p)}(X_{1},...,X_{n};Y)  &  \subset\mathcal{L}_{si(q,q)}%
(X_{1},...,X_{n};Y)\text{ and}\\
\mathcal{L}_{ss(p,p)}(X_{1},...,X_{n};Y)  &  \subset\mathcal{L}_{ss(q,q)}%
(X_{1},...,X_{n};Y).
\end{align*}

\end{proposition}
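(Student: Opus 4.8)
The plan is to read off both inclusions as direct instances of Theorem~\ref{red}. Here the exponent data is the diagonal choice $p_1=q_1=p$ and $p_2=q_2=q$, which trivially satisfies (\ref{abod}): the conditions $p_j\le q_j$ hold with equality, the hypothesis $1\le p\le q<\infty$ gives simultaneously $1\le p_1\le p_2<\infty$ and $1\le q_1\le q_2<\infty$, and $\frac1p-\frac1p=0=\frac1q-\frac1q$ secures the last inequality. So the real content is purely organizational: I must recast each of the two summability notions into the abstract $RS$-framework of the multilinear section and verify that the resulting $R$ and $S$ are multiplicative in the designated vector-space variable $Z$.

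For the semi-integral class I would take $Z=X_1$, absorb the remaining slots into $G=X_2\times\cdots\times X_n$, let $V$ be a singleton, and set $W=B_{X_1^*}\times\cdots\times B_{X_n^*}$. Writing $z=x^1$, $g=(x^2,\dots,x^n)$ and $w=(\varphi_1,\dots,\varphi_n)$, I would define $S(T,z,g,v)=\Vert T(x^1,\dots,x^n)\Vert$ and $R(z,g,w)=|\varphi_1(x^1)\cdots\varphi_n(x^n)|$. Since $S$ does not depend on $v$, the supremum over $v\in V$ collapses, so the abstract condition $T\in RS_{(p,p)}$ becomes exactly (\ref{day}) with $q=p$, that is $T\in\mathcal{L}_{si(p,p)}$. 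Multiplicativity in $Z=X_1$ is immediate from the multilinearity of $T$ in its first slot, $S(T,\lambda z,g,v)=\Vert T(\lambda x^1,\dots)\Vert=|\lambda|\,S(T,z,g,v)$, and from the linearity of $\varphi_1$, $R(\lambda z,g,w)=|\varphi_1(\lambda x^1)\cdots|=|\lambda|\,R(z,g,w)$. Theorem~\ref{red} then yields $\mathcal{L}_{si(p,p)}\subset\mathcal{L}_{si(q,q)}$.

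For the strongly summing class I would keep $Z=X_1$, $G=X_2\times\cdots\times X_n$ and $V$ a singleton, but now take $W=B_{\mathcal{L}(X_1,\dots,X_n;\mathbb{K})}$, the closed unit ball of the continuous scalar $n$-linear forms. Setting $S(T,z,g,v)=\Vert T(x^1,\dots,x^n)\Vert$ as before and $R(z,g,w)=|\varphi(x^1,\dots,x^n)|$ with $w=\varphi$, the abstract condition $T\in RS_{(p,p)}$ reproduces the definition of $\mathcal{L}_{ss(p,p)}$. Multiplicativity in $Z=X_1$ again holds because $\varphi$ is linear in its first argument, $R(\lambda z,g,w)=|\varphi(\lambda x^1,\dots)|=|\lambda|\,R(z,g,w)$, and $S$ is multiplicative exactly as above. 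A second application of Theorem~\ref{red} gives $\mathcal{L}_{ss(p,p)}\subset\mathcal{L}_{ss(q,q)}$.

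I expect no genuine obstacle here; the entire argument is the correct bookkeeping of the abstract data. The one point meriting a moment's care is that Theorem~\ref{red} demands multiplicativity only in the single designated variable $Z$, so it suffices that both $T$ and the test functionals be homogeneous in one fixed slot — which is precisely why choosing $Z=X_1$ (any fixed slot would serve equally well) and relegating the other $n-1$ slots to the passive parameter $G$ works. The only other detail worth a sentence of justification is the vacuous role of $V$, since the abstract definition carries a supremum over $v$ that is harmless once $S$ is taken independent of $v$.
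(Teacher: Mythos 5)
Your proof is correct, but it is not the route the paper itself takes for this particular proposition. The paper obtains these two inclusions as corollaries of the known Pietsch Domination-type theorems for the classes $\mathcal{L}_{si}$ and $\mathcal{L}_{ss}$ (cited from \cite{CP,Dimant}): once $T$ satisfies an integral domination $\left\Vert T(x^{1},\dots,x^{n})\right\Vert \leq C\left(\int R^{p}\,d\mu\right)^{1/p}$ with $\mu$ a probability measure, the monotonicity of $L_{r}(\mu)$-norms in $r$ upgrades the exponent from $p$ to $q$ --- the same mechanism recalled in Section \ref{intro} for the linear case, and one that is intrinsically limited to diagonal exponents. What you do instead is specialize Theorem \ref{red} to $p_{1}=q_{1}=p$, $p_{2}=q_{2}=q$; your verification of (\ref{abod}), of multiplicativity in $Z=X_{1}$, and of the collapse of the supremum over the singleton $V$ is sound, and your bookkeeping ($Z=X_{1}$, $G=X_{2}\times\cdots\times X_{n}$, $W$ the product of dual balls, respectively the ball of scalar $n$-linear forms, with the evident $R$ and $S$) is verbatim the setup the paper uses immediately \emph{after} this proposition to prove the more general inclusion $\mathcal{L}_{si(q_{1},p_{1})}\subset\mathcal{L}_{si(q_{2},p_{2})}$ under (\ref{abod}). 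There is no circularity, since Theorem \ref{red} precedes the proposition in the text, so your argument actually establishes the stronger subsequent proposition, of which the present one is the diagonal case. What the paper's route buys is the narrative it wants: the Pietsch Domination Theorem handles exactly the diagonal case and, as the paper says, is ``useless for the other choices of $p_{j},q_{j}$,'' which is the stated motivation for introducing Theorem \ref{red}. What your route buys is self-containedness --- no external domination theorems and no measure theory --- at the mild cost of making this proposition redundant with the next one.
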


However, the Pietsch Domination Theorem is useless for the other choices of
$p_{j},q_{j}.$ But, as it will be shown, in this case the multilinearity
allows us to obtain better results than those from Theorem \ref{yio}.

For the class of semi-integral mappings we may choose $Z=X_{1}$,
$G=X_{2}\times\cdots\times X_{n}$, $W=B_{X_{1}^{\ast}}\times\cdots\times
B_{X_{n}^{\ast}},$ $V=\{0\},$ $\mathcal{H}=\mathcal{L}(X_{1},...,X_{n};Y)$ and
consider the mappings
\begin{align*}
R\colon Z\times G\times W  &  \longrightarrow\lbrack0,\infty)\\
R\left(  x_{1},(x_{2}...,x_{n}),(\varphi_{1},...,\varphi_{n})\right)   &
=\mid\varphi_{1}(x_{1})...\varphi_{n}(x_{n})\mid
\end{align*}
and%
\begin{align*}
S\colon\mathcal{H}\times Z\times G\times V  &  \longrightarrow\lbrack
0,\infty)\\
S\left(  T,x_{1},(x_{2}...,x_{n}),0\right)   &  =\parallel T(x_{1}%
,...,x_{n})\parallel.
\end{align*}

The case of the class of strongly summing multilinear mappings is analogous.
So, as a consequence of Theorem \ref{red}, we have:

\begin{proposition}
If $p_{j}$ and $q_{j}$ are as in (\ref{abod}) then, for any Banach spaces
$X_{1},...,X_{n},Y$, the following inclusions hold:
\begin{align*}
\mathcal{L}_{si(q_{1},p_{1})}(X_{1},...,X_{n};Y)  &  \subset\mathcal{L}%
_{si(q_{2},p_{2})}(X_{1},...,X_{n};Y)\text{ and}\\
\mathcal{L}_{ss(q_{1},p_{1})}(X_{1},...,X_{n};Y)  &  \subset\mathcal{L}%
_{ss(q_{2},p_{2})}(X_{1},...,X_{n};Y).
\end{align*}

\end{proposition}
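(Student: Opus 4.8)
The plan is to deduce both inclusions as instances of Theorem~\ref{red}, using the abstract data already indicated in the paragraph preceding the statement. The only genuine work is to check that the concrete summability conditions (\ref{day}) and its strongly summing analogue translate \emph{verbatim} into the abstract $RS_{(q,p)}$-summability condition, and that the chosen $R$ and $S$ are multiplicative in the variable $Z$.

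First I would treat the semi-integral case. With $Z=X_{1}$ (a vector space, as required), $G=X_{2}\times\cdots\times X_{n}$, $W=B_{X_{1}^{\ast}}\times\cdots\times B_{X_{n}^{\ast}}$, $V=\{0\}$, $\mathcal{H}=\mathcal{L}(X_{1},\dots,X_{n};Y)$, and $R,S$ as displayed, I would first verify the two finiteness hypotheses of the abstract framework. Since $V=\{0\}$, the quantity $\sup_{v\in V}\sum_{j}S(T,z_{j},g_{j},v)^{q}$ is just the finite sum $\sum_{j}\|T(x_{j}^{1},\dots,x_{j}^{n})\|^{q}$; and since $|\varphi_{l}(x_{l})|\leq\|x_{l}\|$ for $\varphi_{l}\in B_{X_{l}^{\ast}}$, the quantity $\sup_{w\in W}\sum_{j}R(z_{j},g_{j},w)^{p}$ is dominated by $\sum_{j}(\|x_{j}^{1}\|\cdots\|x_{j}^{n}\|)^{p}<\infty$. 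Writing $z_{j}=x_{j}^{1}$ and $g_{j}=(x_{j}^{2},\dots,x_{j}^{n})$, the pair $(z_{j},g_{j})$ ranges over $X_{1}\times\cdots\times X_{n}$, and the abstract inequality defining $RS_{(q,p)}$ becomes exactly (\ref{day}); hence $T\in\mathcal{L}_{si(q,p)}$ if and only if $T\in RS_{(q,p)}$.

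The multiplicativity in $Z=X_{1}$ is where I would be most careful, although it is short: for $R$ one uses the linearity of the first functional, $R(\lambda x_{1},(x_{2},\dots,x_{n}),(\varphi_{1},\dots,\varphi_{n}))=|\varphi_{1}(\lambda x_{1})\varphi_{2}(x_{2})\cdots\varphi_{n}(x_{n})|=|\lambda|\,R(x_{1},(x_{2},\dots,x_{n}),(\varphi_{1},\dots,\varphi_{n}))$, and for $S$ one uses the multilinearity of $T$ in its first coordinate, $S(T,\lambda x_{1},(x_{2},\dots,x_{n}),0)=\|T(\lambda x_{1},x_{2},\dots,x_{n})\|=|\lambda|\,S(T,x_{1},(x_{2},\dots,x_{n}),0)$. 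With both $R$ and $S$ multiplicative in $Z$ and $p_{j},q_{j}$ as in (\ref{abod}), Theorem~\ref{red} gives $RS_{(q_{1},p_{1})}\subset RS_{(q_{2},p_{2})}$, which by the translation above is precisely $\mathcal{L}_{si(q_{1},p_{1})}(X_{1},\dots,X_{n};Y)\subset\mathcal{L}_{si(q_{2},p_{2})}(X_{1},\dots,X_{n};Y)$.

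Finally I would run the identical argument for the strongly summing class, changing only $W$ to $B_{\mathcal{L}(X_{1},\dots,X_{n};\mathbb{K})}$ and $R$ to $R(x_{1},(x_{2},\dots,x_{n}),\varphi)=|\varphi(x_{1},\dots,x_{n})|$, keeping $S$ unchanged. Here multiplicativity of $R$ in $Z=X_{1}$ follows from the multilinearity of the scalar-valued $\varphi$, and $S$ is multiplicative as before, while the finiteness hypotheses hold for the same reasons. A second application of Theorem~\ref{red} yields the strongly summing inclusion. The whole argument is routine bookkeeping; the only point demanding attention is confirming that the multiplicativity hypothesis of Theorem~\ref{red} is genuinely available, which is exactly where the multilinearity of $T$ (and of the functionals) is used---this is the structural ingredient unavailable in the purely set-theoretic setting of Theorem~\ref{yio}, and the reason the sharper conclusion (with no loss parameter) is possible here.
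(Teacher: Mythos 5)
Your proposal is correct and takes essentially the same route as the paper: the authors make exactly the same choices ($Z=X_{1}$, $G=X_{2}\times\cdots\times X_{n}$, $W=B_{X_{1}^{\ast}}\times\cdots\times B_{X_{n}^{\ast}}$, $V=\{0\}$, the displayed $R$ and $S$) and then deduce the proposition as a consequence of Theorem~\ref{red}, declaring the strongly summing case analogous. The multiplicativity-in-$Z$ and finiteness verifications you spell out are precisely the checks the paper leaves implicit, so your write-up is a faithful (slightly more detailed) version of their proof.
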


\subsection{Applications to non-multilinear absolutely summing operators}

As in the previous section, we intend to illustrate how the Inclusion
Principle can be invoked in other situations; we have no exhaustive purpose.

Let us consider the following definitions extending the notion of
semi-integral and strongly multilinear mappings to the non-multilinear
context, even with spaces having a less rich structure than a Banach space:

\begin{definition}
Let $X_{1},...,X_{n}$ be normed spaces and $Y=(Y,d)$ be a metric space. An
arbitrary map $f:X_{1}\times\cdots\times X_{n}\rightarrow Y$ is $((q,\alpha
),p)$-semi integral at $(a_{1},...,a_{n})\in X_{1}\times\cdots\times X_{n}$
(notation $f\in Map_{si((q,\alpha),p))}(X_{1},...,X_{n};Y)$) if there exists
$C\geq0$ such that%
\begin{align*}
&  \left(  \sum\limits_{j=1}^{m}\left(  d\left(  f(a_{1}+x_{j}^{1}%
,...,a_{n}+x_{j}^{n}),f(a_{1},...,a_{n})\right)  \right)  ^{q}\right)
^{1/\alpha}\\
&  \leq C\underset{\varphi_{l}\in B_{X_{l}^{\ast}},l=1,...,n}{\sup}%
\sum\limits_{j=1}^{m}\mid\varphi_{1}(x_{j}^{1})...\varphi_{n}(x_{j}^{n}%
)\mid^{p}%
\end{align*}
for every $m\in\mathbb{N}$, $x_{j}^{l}\in X_{l}$ with $l=1,...,n$ and
$j=1,...,m.$
\end{definition}

\begin{definition}
Let $X_{1},...,X_{n}$ be normed spaces and $Y=(Y,d)$ be a metric space. An
arbitrary map $f:X_{1}\times\cdots\times X_{n}\rightarrow Y$ is strongly
$((q,\alpha),p)$-summing at $(a_{1},...,a_{n})\in X_{1}\times\cdots\times
X_{n}$ (notation $f\in Map_{ss((q,\alpha),p))}(X_{1},...,X_{n};Y)$) if there
exists $C\geq0$ such that%
\begin{align*}
&  \left(  \sum\limits_{j=1}^{m}\left(  d\left(  f(a_{1}+x_{j}^{1}%
,...,a_{n}+x_{j}^{n}),f(a_{1},...,a_{n})\right)  \right)  ^{q}\right)
^{1/\alpha}\\
&  \leq C\underset{\varphi\in\mathcal{L}(X_{1},...,X_{n};\mathbb{K})}{\sup
}\sum\limits_{j=1}^{m}\mid\varphi(x_{j}^{1},...,x_{j}^{n})\mid^{p}%
\end{align*}
for every $m\in\mathbb{N}$, $x_{j}^{l}\in X_{l}$ with $l=1,...,n$ and
$j=1,...,m.$
\end{definition}

By choosing adequate parameters in Theorem \ref{yio} we obtain:

\begin{theorem}
If $p_{j}$ and $q_{j}$ satisfy (\ref{abod}), then%
\begin{align*}
Map_{si((q_{1},1),p_{1})}(X_{1},...,X_{n};Y) &  \subset Map_{si((q_{2}%
,\alpha),p_{2}))}(X_{1},...,X_{n};Y)\text{ and}\\
Map_{ss((q_{1},1),p_{1})}(X_{1},...,X_{n};Y) &  \subset Map_{ss((q_{2}%
,\alpha),p_{2}))}(X_{1},...,X_{n};Y)\text{ }%
\end{align*}
for%
\[
\alpha=\frac{q_{2}p_{1}}{q_{1}p_{2}}\text{ if }p_{1}<p_{2}.\text{\ }%
\]

\end{theorem}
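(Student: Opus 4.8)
The plan is to recognize that this final theorem is nothing more than a concrete specialization of the Inclusion Principle (Theorem \ref{yio}), so the entire task reduces to choosing the data $Z$, $V$, $W$, $\mathcal{H}$, $R$ and $S$ correctly and then verifying that the abstract $RS_{((q,\alpha),p)}$-summability condition coincides, term by term, with the definitions of $Map_{si}$ and $Map_{ss}$. Since the abstract framework of \S2 allows $X$, $Y$, $Z$, $V$, $W$ to be arbitrary non-void sets, there is no structural obstruction to fitting the semi-integral and strongly summing definitions into it; the only real work is bookkeeping the parameters.

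First I would treat the semi-integral case. Fix the base point $(a_1,\dots,a_n)$ and set $Z=X_1\times\cdots\times X_n$ (so a ``vector'' $z_j$ is the increment tuple $(x_j^1,\dots,x_j^n)$), take $\mathcal{H}=Map(X_1\times\cdots\times X_n, Y)$, put $V=\{0\}$ as a one-point set, and let $W=B_{X_1^\ast}\times\cdots\times B_{X_n^\ast}$. I would then define
\begin{align*}
R\bigl((x^1,\dots,x^n),(\varphi_1,\dots,\varphi_n)\bigr)
  &= \lvert \varphi_1(x^1)\cdots\varphi_n(x^n)\rvert,\\
S\bigl(f,(x^1,\dots,x^n),0\bigr)
  &= d\bigl(f(a_1+x^1,\dots,a_n+x^n),\,f(a_1,\dots,a_n)\bigr).
\end{align*}
With these choices the supremum over $w\in W$ is exactly the supremum over $\varphi_l\in B_{X_l^\ast}$ appearing in the definition, and the (trivial) supremum over the singleton $V$ reproduces the left-hand sum; one then checks directly that $f\in RS_{((q_1,1),p_1)}$ is the same statement as $f\in Map_{si((q_1,1),p_1)}$, and likewise $RS_{((q_2,\alpha),p_2)}$ matches $Map_{si((q_2,\alpha),p_2)}$. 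Applying Theorem \ref{yio} with the same hypotheses (\ref{abod}) on $p_j,q_j$ then yields the first inclusion verbatim, with the prescribed $\alpha=\frac{q_2 p_1}{q_1 p_2}$.

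For the strongly summing case the construction is identical in spirit: I would keep $Z$, $\mathcal{H}$, $V=\{0\}$ and the same $S$, but replace the index set $W$ by $\mathcal{L}(X_1,\dots,X_n;\mathbb{K})$ (the whole space of continuous scalar $n$-linear forms, whose unit ball is what the definition ranges over) and set $R\bigl((x^1,\dots,x^n),\varphi\bigr)=\lvert\varphi(x^1,\dots,x^n)\rvert$; the supremum in the definition is then again a supremum over the $w$-variable, and Theorem \ref{yio} applies word for word. I expect no genuine obstacle here, only the minor nuisance of confirming that the finiteness hypotheses on the two suprema in \S2 are met — these hold because each term is finite and the sums are finite, and the sup over the compact $W$ (or over the unit ball in the strongly summing case) is finite by continuity, exactly as the parenthetical remark in \S2 anticipates. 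The one point that merits a sentence of care is that the exponent $\alpha$ produced by the Inclusion Principle is attached to the \emph{outer} power $1/\alpha$ on the $S$-sum, and one should confirm this matches the $1/\alpha$ placement in the semi-integral and strongly summing definitions; once that alignment is noted, both inclusions follow immediately.
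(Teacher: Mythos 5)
Your proposal is correct and is exactly the argument the paper intends: the paper derives this theorem by ``choosing adequate parameters in Theorem~\ref{yio}'', and your choices ($Z=X_{1}\times\cdots\times X_{n}$, $V=\{0\}$, $W$ the relevant set of functionals or $n$-linear forms of norm at most one, with $R$ and $S$ reproducing the right- and left-hand sides of the two definitions) are the natural instantiation, with the finiteness hypotheses holding since $\vert\varphi_{1}(x^{1})\cdots\varphi_{n}(x^{n})\vert\leq\Vert x^{1}\Vert\cdots\Vert x^{n}\Vert$ (and similarly in the strongly summing case). No gap remains once the term-by-term identification of $RS_{((q_{j},\cdot),p_{j})}$ with $Map_{si}$ and $Map_{ss}$ is made, which you do.
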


\subsection{Applications to non-multilinear absolutely summing operators in
the sense of Matos}

In \cite{Nach} M. Matos considered a concept of summability which can be
characterized by means of an inequality as follows:

If $X$ and $Y$ are Banach spaces, a map $f:X\rightarrow Y$ is \emph{absolutely
$(q,p)$-summing at $a$} if there are constants $C>0$ and $\delta>0 $ such that%
\[
\sum_{j=1}^{\infty}\left\Vert f(a+z_{j})-f(a)\right\Vert ^{q}\leq
C\sup_{\varphi\in B_{X^{\ast}}}\sum_{j=1}^{\infty}\left\vert \varphi
(z_{j})\right\vert ^{p},
\]
for all $(z_{j})_{j=1}^{\infty}\in\ell_{p}^{u}(X)$ and
\[
\left\Vert (z_{j})_{j=1}^{\infty}\right\Vert _{w,p}:=\sup_{\varphi\in
B_{X^{\ast}}}\left(  {\sum\limits_{j=1}^{\infty}}\left\vert \varphi
(z_{j})\right\vert ^{p}\right)  ^{1/p}<\delta.
\]

Above,
\[
\ell_{p}^{u}(X):=\left\{  (z_{j})_{j=1}^{\infty}\in\ell_{p}^{\text{weak}%
}(X);\lim_{n\rightarrow\infty}\left\Vert (z_{j})_{j=n}^{\infty}\right\Vert
_{w,p}=0\right\}  .
\]

It is worth mentioning that there exists a version of our inclusion principle
in this context. If $\alpha\in\mathbb{R}$, we will say that $f:X\rightarrow Y$
is \emph{Matos absolutely $((q,\alpha),p)$-summing at $a$} (denoted by $f\in
M_{((q,\alpha),p)}$) if there are constants $C>0$ and $\delta>0$ such that
\begin{equation}
\left(  \sum_{j=1}^{\infty}\left\Vert f(a+z_{j})-f(a)\right\Vert ^{q}\right)
^{\frac{1}{\alpha}}\leq C\sup_{\varphi\in B_{X^{\ast}}}\sum_{j=1}^{\infty
}\left\vert \varphi(z_{j})\right\vert ^{p}, \label{33M2}%
\end{equation}
for all $(z_{j})_{j=1}^{\infty}\in\ell_{p}^{u}(X)$ and $\left\Vert
(z_{j})_{j=1}^{\infty}\right\Vert _{w,p}<\delta$. If $\alpha=1$ we recover
Matos' original concept and simply write $(q,p)$ instead of $((q,1),p)$.

With this at hand, we can now state the following result:

\begin{theorem}
\label{yio2}If $p_{j}$ and $q_{j}$ are as in \eqref{abod}, then
\[
M_{(q_{1},p_{1})}\subset M_{\left(  \left(  q_{2},\alpha\right)
,p_{2}\right)  }\text{ }%
\]
for%
\[
\alpha=\frac{q_{2}p_{1}}{q_{1}p_{2}}%
\]
whenever $p_{1}<p_{2}$.
\end{theorem}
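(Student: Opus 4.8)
The plan is to mimic the proof of Theorem~\ref{yio} almost verbatim, with the dictionary $S(f,z_j,v)=\left\Vert f(a+z_j)-f(a)\right\Vert$ (so the variable $v$ is inactive) and $R(z_j,\varphi)=\left\vert \varphi(z_j)\right\vert$ for $\varphi\in B_{X^{\ast}}$. Writing $b_j=\left\Vert f(a+z_j)-f(a)\right\Vert$ and defining $p,q$ by $\frac1p=\frac1{p_1}-\frac1{p_2}$ and $\frac1q=\frac1{q_1}-\frac1{q_2}$, condition \eqref{abod} gives $1\le q\le p<\infty$, exactly as in that proof. Since every term is nonnegative and the right-hand side of \eqref{33M2} only grows with the number of summands, I would first reduce to finite systems $z_1,\dots,z_m$: the estimate $\left(\sum_{j=1}^m b_j^{q_2}\right)^{1/\alpha}\le C_1\sup_\varphi\sum_{j=1}^m\left\vert\varphi(z_j)\right\vert^{p_2}$ with $C_1$ independent of $m$, together with the obvious $\sup_\varphi\sum_{j=1}^m\left\vert\varphi(z_j)\right\vert^{p_2}\le\sup_\varphi\sum_{j=1}^\infty\left\vert\varphi(z_j)\right\vert^{p_2}$ and monotone convergence of the partial sums, yields the full statement for $(z_j)\in\ell_{p_2}^u(X)$ with $\left\Vert(z_j)\right\Vert_{w,p_2}<\delta_1$.

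Granting the weighted form of the hypothesis, the computation is purely that of Theorem~\ref{yio}. Putting $\lambda_j=b_j^{q_2/q}$ and taking the weights $\eta_j=\lambda_j^{q_1}=b_j^{q_2-q_1}$, the identity $\lambda_j^{q_1}b_j^{q_1}=b_j^{q_2}$ gives $\sum_j b_j^{q_2}\le C\sup_\varphi\sum_j\eta_j\left\vert\varphi(z_j)\right\vert^{p_1}$. H\"older's inequality with the conjugate exponents $p/p_1$ and $p_2/p_1$, then the embedding $\left\Vert\cdot\right\Vert_{\ell_{q_1 p/p_1}}\le\left\Vert\cdot\right\Vert_{\ell_q}$ (valid since $\frac{q_1 p}{p_1}\ge p\ge q$), together with $\lambda_j^q=b_j^{q_2}$, produce $\sum_j b_j^{q_2}\le C\left(\sum_j b_j^{q_2}\right)^{q_1/q}\left(\sup_\varphi\sum_j\left\vert\varphi(z_j)\right\vert^{p_2}\right)^{p_1/p_2}$. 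As $1-\frac{q_1}{q}=\frac{q_1}{q_2}$, dividing out and raising to the power $p_2/p_1$ give exactly $\left(\sum_j b_j^{q_2}\right)^{1/\alpha}\le C^{p_2/p_1}\sup_\varphi\sum_j\left\vert\varphi(z_j)\right\vert^{p_2}$ with $\alpha=\frac{q_2 p_1}{q_1 p_2}$, so I would set $C_1=C^{p_2/p_1}$.

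The only real difference from Theorem~\ref{yio}, and the step I expect to be the main obstacle, is the justification of the weighted hypothesis $\sum_j\eta_j b_j^{q_1}\le C\sup_\varphi\sum_j\eta_j\left\vert\varphi(z_j)\right\vert^{p_1}$ for positive real weights. There the defining inequality holds for all finite systems, so the Mendel--Schechtman device (repeat $z_j$ an integer number of times, clear denominators, then use density) is immediate; here the Matos inequality is only available on the ball $\left\{\left\Vert(z_j)\right\Vert_{w,p_1}<\delta\right\}$, and repeating entries strictly enlarges the weak-$\ell_{p_1}$ norm, so one must keep every repeated system inside that ball. My approach would be to shrink $\delta_1$ and exploit a priori smallness: applying the hypothesis to the one-term systems (legitimate because $\left\Vert z_j\right\Vert=\left\Vert z_j\right\Vert_{w,p_1}\le\left\Vert(z_j)\right\Vert_{w,p_2}<\delta_1\le\delta$) yields $b_j\le(C\delta_1^{p_1})^{1/q_1}$, whence the weights $\eta_j=b_j^{q_2-q_1}$ are uniformly small, and the H\"older bound $\sup_\varphi\sum_j\eta_j\left\vert\varphi(z_j)\right\vert^{p_1}\le\left(\sum_j b_j^{q_2}\right)^{q_1/q}\delta_1^{p_1}$ controls the total weighted mass. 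Reconciling this locality constraint with the density/limit half of Mendel--Schechtman is the delicate point; once it is arranged that the systems occurring in the weighting never leave $\left\{\left\Vert\cdot\right\Vert_{w,p_1}<\delta\right\}$, the display of the second paragraph closes the argument and determines the admissible radius $\delta_1$ for membership in $M_{((q_2,\alpha),p_2)}$.
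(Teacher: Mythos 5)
Your dictionary, the reduction to finite systems via monotone convergence, and the H\"older--interpolation computation in your second paragraph are all correct, and they are precisely the adaptation of Theorem~\ref{yio} that the authors presumably had in mind (the paper states Theorem~\ref{yio2} with no proof at all). However, the step you yourself flag as ``the delicate point'' is a genuine gap, and it cannot be repaired. The Mendel--Schechtman device gives the weighted inequality $\sum_{j}\eta_{j}b_{j}^{q_{1}}\leq C\sup_{\varphi}\sum_{j}\eta_{j}\left\vert \varphi(z_{j})\right\vert ^{p_{1}}$ for \emph{integer} weights $\eta_{j}=n_{j}$ only when the repeated system is itself admissible, i.e.\ when $\sup_{\varphi}\sum_{j}n_{j}\left\vert \varphi(z_{j})\right\vert ^{p_{1}}<\delta^{p_{1}}$. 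To reach rational weights $\eta_{j}=n_{j}/N$ one must clear denominators, and what is actually fed into the hypothesis is the system with $n_{j}=N\eta_{j}$ repetitions, whose weak mass is $N\sup_{\varphi}\sum_{j}\eta_{j}\left\vert \varphi(z_{j})\right\vert ^{p_{1}}$; admissibility therefore demands $\sup_{\varphi}\sum_{j}\eta_{j}\left\vert \varphi(z_{j})\right\vert ^{p_{1}}<\delta^{p_{1}}/N$, a condition that degenerates exactly when density forces $N\to\infty$. Your proposed remedy does not meet this: the H\"older estimate $\sup_{\varphi}\sum_{j}\eta_{j}\left\vert \varphi(z_{j})\right\vert ^{p_{1}}\leq\bigl(\sum_{j}b_{j}^{q_{2}}\bigr)^{q_{1}/q}\delta_{1}^{p_{1}}$ controls the real-weighted mass rather than the cleared-denominator mass, and it is circular besides, since $\sum_{j}b_{j}^{q_{2}}$ is the very quantity being estimated and the one-term bounds only give $\sum_{j}b_{j}^{q_{2}}\leq m\,(C\delta_{1}^{p_{1}})^{q_{2}/q_{1}}$, which is not uniform in the length $m$ of the system.

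The gap is in fact unfillable, because the statement itself fails. Take $p_{1}=q_{1}=1$, $p_{2}=q_{2}=2$ (so \eqref{abod} holds, $p_{1}<p_{2}$, and $\alpha=1$), $X=\ell_{2}$ with canonical basis $(e_{i})$, $Y=\mathbb{R}$, $a=0$, and define $f(2^{-\ell}e_{i})=4^{-\ell}$ for all $\ell,i\geq1$ and $f=0$ elsewhere. If $W:=\sup_{\varphi}\sum_{j}\left\vert \varphi(z_{j})\right\vert <1$, let $m_{\ell,i}$ be the number of indices $j$ with $z_{j}=2^{-\ell}e_{i}$ and put $M_{i}=\sum_{\ell}m_{\ell,i}2^{-\ell}$; then $2^{-\ell}\leq M_{i}$ whenever $m_{\ell,i}\geq1$, and $\bigl(\sum_{i}M_{i}^{2}\bigr)^{1/2}\leq W$, so
\[
\sum_{j}\left\vert f(z_{j})\right\vert =\sum_{i}\sum_{\ell}\bigl(m_{\ell,i}2^{-\ell}\bigr)2^{-\ell}\leq\sum_{i}M_{i}^{2}\leq W^{2}\leq W,
\]
i.e.\ $f\in M_{(1,1)}$ with $C=\delta=1$. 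Yet for every $\ell$ and every $n$ the system $(2^{-\ell}e_{i})_{i=1}^{n}$ has $\left\Vert \cdot\right\Vert _{w,2}=2^{-\ell}$ independently of $n$, while $\sum_{i\leq n}\left\vert f(2^{-\ell}e_{i})\right\vert ^{2}=n\,16^{-\ell}$; letting $n\to\infty$ violates \eqref{33M2} for every choice of $C_{1}$, $\delta_{1}$ and every exponent $\alpha>0$, so $f\notin M_{((2,\alpha),2)}$. This is your obstruction in concrete form: the hypothesis's constraint $\left\Vert \cdot\right\Vert _{w,p_{1}}<\delta$ caps the number of vectors in any system it controls, while the conclusion's constraint $\left\Vert \cdot\right\Vert _{w,p_{2}}<\delta_{1}$ does not. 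A correct result requires either dropping the locality (demanding Matos' inequality on all of $\ell_{p_{1}}^{u}(X)$, in which case your argument reduces to Theorem~\ref{yio} and works) or imposing additional structure on $f$.
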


\section{A full general version of the Pietsch Domination Theorem\label{fgg}}

If $X_{1},...,X_{n},Y$ are Banach spaces, the set of all continuous $n$-linear
mappings $T:X_{1}\times\cdots\times X_{n}\rightarrow Y$ is represented by
$\mathcal{L}(X_{1},...,X_{n};Y)$.\ All measures considered in this paper will
be probability measures defined in the Borel sigma-algebras of compact
topological spaces.

In this section, and for the sake of completeness, we will recall the more
general version that we know, until now, for the Pietsch Domination Theorem.
This approach is a combination of \cite{BPRn} and a recent improvement from
\cite{psjmaa} and will be generalized in the subsequent section.

Let $X$, $Y$ and $E$ be (arbitrary) non-void sets, $\mathcal{H}$ be a family
of mappings from $X$ to $Y$, $G$ be a Banach space and $K$ be a compact
Hausdorff topological space. Let
\[
R\colon K\times E\times G\longrightarrow\lbrack0,\infty)~\text{and}%
\mathrm{~}S\colon{\mathcal{H}}\times E\times G\longrightarrow\lbrack0,\infty)
\]
be mappings so that the following property hold:

\begin{quote}
``The mapping
\[
R_{x,b}\colon K\longrightarrow\lbrack0,\infty)~\text{defined by}%
~R_{x,b}(\varphi)=R(\varphi,x,b)
\]
is continuous for every $x\in E$ and $b\in G$.''
\end{quote}

Let $R$ and $S$ be as above and $0<p<\infty$. A mapping $f\in\mathcal{H}$ is
said to be $R$-$S$-abstract $p$-summing if there is a constant $C>0$ so that%
\begin{equation}
\left(  \sum_{j=1}^{m}S(f,x_{j},b_{j})^{p}\right)  ^{\frac{1}{p}}\leq
C\sup_{\varphi\in K}\left(  \sum_{j=1}^{m}R\left(  \varphi,x_{j},b_{j}\right)
^{p}\right)  ^{\frac{1}{p}}, \label{33M}%
\end{equation}
for all $x_{1},\ldots,x_{m}\in E,$ $b_{1},\ldots,b_{m}\in G$ and
$m\in\mathbb{N}$.

The general unified PDT reads as follows:

\begin{theorem}
(\cite{BPRn, psjmaa}) Let $R$ and $S$ be as above, $0<p<\infty$ and
$f\in{\mathcal{H}}$. Then $f$ is $R$-$S$-abstract $p$-summing if and only if
there is a constant $C>0$ and a Borel probability measure $\mu$ on $K$ such
that%
\begin{equation}
S(f,x,b)\leq C\left(  \int_{K}R\left(  \varphi,x,b\right)  ^{p}d\mu\right)
^{\frac{1}{p}} \label{olk}%
\end{equation}
for all $x\in E$ and $b\in G.$
\end{theorem}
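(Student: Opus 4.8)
The statement is an equivalence, so the plan is to dispatch the routine implication directly and reserve the real work for the converse, which I would obtain by a Hahn--Banach separation argument in $C(K)$. First I would prove that the domination (\ref{olk}) implies the summing condition (\ref{33M}). Assuming $S(f,x,b)\le C(\int_K R(\varphi,x,b)^p\,d\mu)^{1/p}$, I raise to the $p$-th power, sum over $j$, interchange the finite sum with the integral, and bound the integrand by its supremum over $K$; since $\mu$ is a probability measure this gives $\sum_j S(f,x_j,b_j)^p\le C^p\sup_{\varphi}\sum_j R(\varphi,x_j,b_j)^p$, and a $p$-th root yields (\ref{33M}). This uses only that each $R(\cdot,x,b)^p$ is bounded and measurable, which follows from the hypothesis that $R_{x,b}$ is continuous on the compact space $K$.

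For the converse I would set up the separation. For each pair $(x,b)\in E\times G$ define $\psi_{x,b}\in C(K)$ by $\psi_{x,b}(\varphi)=S(f,x,b)^p-C^p R(\varphi,x,b)^p$; this lies in $C(K)$ precisely because $R_{x,b}$ is continuous, the constant term $S(f,x,b)^p$ causing no trouble. The crucial step is to show that every element of the convex hull $\mathcal{W}$ of $\{\psi_{x,b}:(x,b)\in E\times G\}$ attains a nonpositive minimum on $K$. I would derive this from the summing inequality by a weighting argument: repeating each pair $(x_j,b_j)$ an integer number of times $\eta_j$ in (\ref{33M}) produces the inequality with integer weights, normalising passes to rational weights, and density together with continuity in the weights extends it to arbitrary nonnegative reals summing to one (the Mendel--Schechtman device already invoked in the proof of Theorem \ref{yio}). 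Applied to $\sum_j t_j\psi_{x_j,b_j}$, this shows $\min_{\varphi}\sum_j t_j\psi_{x_j,b_j}(\varphi)=\sum_j t_j S(f,x_j,b_j)^p-C^p\sup_{\varphi}\sum_j t_j R(\varphi,x_j,b_j)^p\le 0$, so $\mathcal{W}$ is disjoint from the open convex cone $\mathcal{O}=\{h\in C(K):h>0 \text{ on } K\}$.

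Finally I would separate $\mathcal{W}$ from $\mathcal{O}$ by Hahn--Banach, obtaining a nonzero continuous linear functional $L$ on $C(K)$ and $\gamma\in\mathbb{R}$ with $L(g)\le\gamma\le L(h)$ for all $g\in\mathcal{W}$, $h\in\mathcal{O}$. Since $\mathcal{O}$ is a cone and $0\in\overline{\mathcal{O}}$, the functional $L$ is forced to be nonnegative on nonnegative functions and one may take $\gamma=0$; by the Riesz representation theorem $L$ is integration against a positive Borel measure $\nu$ on $K$, nonzero because $L\ne 0$, so that $\mu:=\nu/\nu(K)$ is a probability measure. Evaluating $L(\psi_{x,b})\le 0$ for each single pair $(x,b)$, which belongs to $\mathcal{W}$, gives $S(f,x,b)^p\le C^p\int_K R(\varphi,x,b)^p\,d\mu$, and a $p$-th root yields (\ref{olk}). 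The main obstacle I anticipate is precisely the bookkeeping of the separation step: ensuring that $L$ emerges positive and nonzero so that it is represented by a genuine probability measure after normalisation, in tandem with the passage from integer to real weights that makes the nonpositive-minimum property hold on the entire convex hull rather than merely on finite sums.
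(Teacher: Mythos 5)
Your argument is correct, and it takes a genuinely different route from the one the paper itself uses. The paper never proves this statement directly: it is quoted from \cite{BPRn, psjmaa}, and is later recovered as the case $r=t=n=1$ of the full general version, Theorem \ref{gpdt}, whose proof runs through Ky Fan's Lemma applied to a concave family of convex continuous functions defined on the compact convex set of (tuples of) probability measures, with Dirac measures at sup-attaining points verifying the pointwise hypothesis; moreover, since Theorem \ref{gpdt} assumes the quasi-homogeneity conditions \eqref{novaq}, recovering the present statement for arbitrary $R$ and $S$ still requires the homogenization trick of \cite{psjmaa}, in which $G$ is enlarged by a scalar weight variable. You instead work in $C(K)$ itself: you separate the convex hull of the functions $\psi_{x,b}=S(f,x,b)^p-C^pR(\cdot,x,b)^p$ from the open cone of strictly positive functions, and extract the measure by Riesz representation after normalizing the positive separating functional. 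The lack of homogeneity of $R$ and $S$, which is the only real obstruction, is absorbed in your proof by the Mendel--Schechtman repetition/density device (the same one the paper uses to prove Theorem \ref{yio}); this is what makes the entire convex hull, and not merely each single $\psi_{x,b}$, have nonpositive minimum on $K$, and the passage from rational to real weights is legitimate because the right-hand side of the weighted inequality is a supremum of linear functions of the weights with uniformly bounded coefficients (compactness of $K$ plus continuity of each $R_{x,b}$), hence continuous in the weights. Your cone argument does force the separating functional to be positive and nonzero, so the normalization to a probability measure goes through, and you even retain the same constant $C$ in \eqref{olk} as in \eqref{33M}. As for what each approach buys: yours is self-contained and elementary for this one-measure statement, essentially the classical separation proof of the linear Pietsch theorem in abstract dress; the paper's heavier Ky Fan machinery is what allows several measures $\mu_1,\dots,\mu_t$ to be produced simultaneously for the product-type domination \eqref{2} of Theorem \ref{gpdt}, a situation where a single separation in $C(K)$ would not suffice.
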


From now on, if $X_{1},...,X_{n},Y$ are arbitrary sets, $Map(X_{1}%
,...,X_{n};Y)$ will denote the set of all arbitrary mappings from $X_{1}%
\times\cdots\times X_{n}$ to $Y$ (no assumption is necessary).

Let $0<q_{1},...,q_{n}<\infty$ and $1/q=%
{\textstyle\sum\limits_{j=1}^{n}}
1/q_{j}.$ A map $f\in Map(X_{1},...,X_{n};Y)$ is $(q_{1},...,q_{n})$-dominated
at $(a_{1},...,a_{n})\in X_{1}\times\cdots\times X_{n}$ if there is a $C>0$
and there are Borel probabilities $\mu_{k}$ on $B_{X_{k}^{\ast}},$
$k=1,...,n$, such that%
\begin{equation}
\left\Vert f(a_{1}+x^{(1)},...,a_{n}+x^{(n)})-f(a_{1},...,a_{n})\right\Vert
\leq C%
{\displaystyle\prod\limits_{k=1}^{n}}
\left(  \int_{B_{X_{k}^{\ast}}}\left\vert \varphi(x^{(k)})\right\vert ^{q_{k}%
}d\mu_{k}\right)  ^{\frac{1}{q_{k}}} \label{domGG}%
\end{equation}
for all $x^{(j)}\in X_{j}$, $j=1,...,n$.

In our recent note \cite{psmz} we observed that the general approach from
\cite{BPRn, psjmaa} was not able to characterize the mappings satisfying
(\ref{domGG}), and a new Pietsch-type theorem was proved:

\begin{theorem}
\label{ttta}(\cite{psmz})A map $f\in Map(X_{1},...,X_{n};Y)$ is $(q_{1}%
,...,q_{n})$-dominated at $(a_{1},...,a_{n})\in X_{1}\times\cdots\times X_{n}$
if and only if there is a $C>0$ such that
\begin{align}
&  \left(
{\displaystyle\sum\limits_{j=1}^{m}}
\left(  \left\vert b_{j}^{(1)}...b_{j}^{(n)}\right\vert \left\Vert
f(a_{1}+x_{j}^{(1)},...,a_{n}+x_{j}^{(n)})-f(a_{1},...,a_{n})\right\Vert
\right)  ^{q}\right)  ^{1/q}\label{qww}\\
&  \leq C%
{\displaystyle\prod\limits_{k=1}^{n}}
\sup_{\varphi\in B_{X_{k}^{\ast}}}\left(
{\displaystyle\sum\limits_{j=1}^{m}}
\left(  \left\vert b_{j}^{(k)}\right\vert \left\vert \varphi(x_{j}%
^{(k)})\right\vert \right)  ^{q_{k}}\right)  ^{1/q_{k}}\nonumber
\end{align}
for every positive integer $m$, $(x_{j}^{(k)},b_{j}^{(k)})\in X_{k}%
\times\mathbb{K}$, with $(j,k)\in\{1,...,m\}\times\{1,...,n\}.$
\end{theorem}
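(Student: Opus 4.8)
The statement is an equivalence, so I would prove the two implications separately. The \emph{necessity} of \eqref{qww} from the domination \eqref{domGG} is the routine half; the \emph{sufficiency}, i.e.\ manufacturing the measures $\mu_1,\dots,\mu_n$ out of the scalar inequality \eqref{qww}, is the genuine Pietsch-type half and the source of the difficulty.

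For the easy direction, assume \eqref{domGG} holds with constant $C$ and measures $\mu_1,\dots,\mu_n$, and fix data $(x_j^{(k)},b_j^{(k)})$. Applying \eqref{domGG} with $x^{(k)}=x_j^{(k)}$ and multiplying both sides by $|b_j^{(1)}\cdots b_j^{(n)}|$, I would absorb each scalar $|b_j^{(k)}|$ into the corresponding integral (legitimate since $|b_j^{(k)}|$ is constant in $\varphi$), obtaining
\[
|b_j^{(1)}\cdots b_j^{(n)}|\,\|f(\dots)-f(a_1,\dots,a_n)\|\le C\prod_{k=1}^{n}\left(\int_{B_{X_k^{\ast}}}\bigl(|b_j^{(k)}||\varphi(x_j^{(k)})|\bigr)^{q_k}\,d\mu_k\right)^{1/q_k}.
\]
Then I would raise to the power $q$, sum over $j$, and apply the generalized H\"{o}lder inequality with exponents $q_k/q$ (admissible since $\sum_k q/q_k=q\sum_k 1/q_k=1$), which moves the sum over $j$ inside the product over $k$. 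Finally, interchanging the finite sum with each integral and using that every $\mu_k$ is a probability measure, I would bound $\int_{B_{X_k^{\ast}}}\sum_j(\cdots)^{q_k}\,d\mu_k$ by $\sup_{\varphi\in B_{X_k^{\ast}}}\sum_j(\cdots)^{q_k}$; taking $q$-th roots yields exactly \eqref{qww}, with the same constant $C$.

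For the converse I would run a Pietsch-type separation argument on the compact Hausdorff space $K=B_{X_1^{\ast}}\times\cdots\times B_{X_n^{\ast}}$ (each factor in its weak-star topology), aiming to produce the $n$ probability measures simultaneously. The obstruction is that \eqref{domGG}, raised to the power $q$, involves the \emph{product} $\prod_k(\int|\varphi(x^{(k)})|^{q_k}\,d\mu_k)^{\theta_k}$ with $\theta_k=q/q_k$, which is not affine in the tuple $(\mu_1,\dots,\mu_n)$, whereas separation arguments only detect affine constraints. The device that removes this is the weighted arithmetic--geometric mean identity
\[
\prod_{k=1}^{n}t_k^{\theta_k}=\min\left\{\sum_{k=1}^{n}\theta_k c_k t_k:\ c_k>0,\ \prod_{k=1}^{n}c_k^{\theta_k}=1\right\},\qquad \sum_{k=1}^{n}\theta_k=1,
\]
which linearizes the product at the cost of auxiliary scaling parameters $c_k$. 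With this in hand I would form, in $\prod_k C(B_{X_k^{\ast}})$, the convex cone generated by the functions attached to the data $(x_j^{(k)},b_j^{(k)})$ and show, using precisely \eqref{qww} together with the homogeneity supplied by the scalar weights $b_j^{(k)}$, that this cone avoids the strictly positive constants; a Hahn--Banach separation performed on each finite subfamily, followed by a finite-intersection-property argument over the weak-star compact sets of probability measures, then delivers the $\mu_k$ realizing \eqref{domGG}.

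The main obstacle is this converse: producing $n$ measures at once against the multiplicative, hence non-affine, shape of \eqref{domGG}. This is exactly the point at which the single-measure abstract Pietsch Domination Theorem of \cite{BPRn,psjmaa} is insufficient, and it is resolved here by combining the AM--GM linearization above with the full complement of scalar weights $b_j^{(k)}$ present in \eqref{qww}, whose homogeneity is what allows the separating functionals to be reassembled into a product of single-variable integrals.
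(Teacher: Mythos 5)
Your proposal is correct, and its engine is the same as the paper's, but the organization differs enough to be worth comparing. In the paper, Theorem \ref{ttta} is not proved on its own: it is recovered by specializing the abstract Theorem \ref{gpdt} (take $r=t=n$, $E_k=X_k$, $G_k=\mathbb{K}$, $K_k=B_{X_k^{\ast}}$, $R_k=|b^{(k)}||\varphi(x^{(k)})|$ and $S$ the weighted norm difference, exactly as in Theorem \ref{cinco}), and the proof of Theorem \ref{gpdt} applies Ky Fan's Lemma on $P(K_1)\times\cdots\times P(K_t)$ to functions made affine in the measures by Young's inequality (Lemma \ref{yy}), converting the resulting \emph{additive} bound into the \emph{multiplicative} one only afterwards, via the $\vartheta_j$/$\beta$ rescaling permitted by the homogeneity hypotheses (\ref{novaq}), with vanishing integrals handled by letting $\beta\to\infty$. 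Your route inlines the proof of Ky Fan's Lemma --- Hahn--Banach separation for each finite subfamily plus the finite-intersection property in the weak-star compact set of tuples of probability measures --- and carries the AM--GM parameters $(c_k)$ inside the constraint family, so that the multiplicative form drops out at the end by taking the infimum over admissible $(c_k)$; note that Lemma \ref{yy} is precisely the $c_k\equiv 1$ instance of your variational identity, and the paper's rescaling step implements the optimal choice of $(c_k)$, so the two devices are the same inequality deployed at different moments. The trade-off: the paper's detour buys generality (one proof of Theorem \ref{gpdt} serves the unified PDT, dominated multilinear mappings, Cohen strongly summing operators, and Theorem \ref{cinco}), while your direct argument is self-contained and has a cleaner endgame, with no $\beta\to\infty$ limit needed for degenerate integrals. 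The step your sketch leaves thinnest, however, is the crux: the finite-subfamily separation when constraints carrying different weights $(c^{(i)}_k)$ enter with convex coefficients $\lambda_i$. To close it you must rescale the data, replacing $b_{j}^{(k)}$ by $(\lambda_i c^{(i)}_k)^{1/q_k}b_{j}^{(k)}$, and use $\sum_k q/q_k=1$ together with the normalization $\prod_k (c^{(i)}_k)^{q/q_k}=1$ to check that the left-hand side of \eqref{qww} picks up exactly the factor $\lambda_i$ while the $k$-th factor on the right picks up $\lambda_i c^{(i)}_k$; this computation does work --- it is precisely the role the scalar weights $b_j^{(k)}$, and in the abstract version hypotheses (\ref{novaq}), are there to play --- but it must be written out rather than gestured at, since without it the finite-intersection property over mixed $(c_k)$'s is not established.
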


As pointed in \cite{psmz}, inequality (\ref{qww}) arises the curious idea of
weighted summability: each $x_{j}^{(k)}$ is interpreted as having a ``weight''
$b_{j}^{(k)}$ and in this context the respective sum
\[
\left\Vert f(a_{1}+x_{j}^{(1)},...,a_{n}+x_{j}^{(n)})-f(a_{1},...,a_{n}%
)\right\Vert
\]
inherits a weight $\left\vert b_{j}^{(1)} \cdot\dots\cdot b_{j}^{(n)}%
\right\vert $.

As it is shown in \cite{BPRn}, the unified PDT (UPDT) immediately recovers
several known Pietsch-type theorems. However, in at least one important
situation (the PDT for dominated multilinear mappings), the respective PDT is
not straightforwardly obtained from the UPDT from \cite{BPRn}. In fact, as
pointed in \cite{psmz}, the structural difference between (\ref{olk}) and
(\ref{domGG}) is an obstacle to recover some domination theorems as Theorem
\ref{ttta}. The same deficiency of the (general) UPDT\ will be clear in
Section \ref{ko}.

In the next section the approach of \cite{psmz} is translated to a more
abstract setting and the final result shows that Theorem \ref{ttta} holds in a
very general context. Some applications are given in order to show the reach
of this generalization.

\subsection{The full general Pietsch Domination Theorem}

In this section we prove a quite general PDT which seems to delimit the
possibilities of such kind of result. The procedure is an abstraction of the
main result of \cite{psmz}. It is curious the fact that the Unified Pietsch
Domination Theorem from \cite{BPRn} does not use Pietsch's original argument,
but this more general version, as in \cite{psmz}, uses precisely Pietsch's
original approach in an abstract disguise.

The main tool of our argument, as in Pietsch's original proof of the linear
case, is a Lemma by Ky Fan.

\begin{lemma}
[Ky Fan]Let $K$ be a compact Hausdorff topological space and $\mathcal{F}$ be
a concave family of functions $f:K\rightarrow\mathbb{R}$ which are convex and
lower semicontinuous. If for each $f\in\mathcal{F}$ there is a $x_{f}\in K$ so
that $f(x_{f})\leq0,$ then there is a $x_{0}\in K$ such that $f(x_{0})\leq0$
for every $f\in\mathcal{F}$ .
\end{lemma}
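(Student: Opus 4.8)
The plan is to recast the conclusion as a statement about a family of closed sets, prove that this family has the finite intersection property, and then invoke compactness. Concretely, for each $f\in\mathcal{F}$ put $K_f=\{\varphi\in K:f(\varphi)\le 0\}$. Since $f$ is lower semicontinuous the sublevel set $K_f$ is closed, and the hypothesis (existence of $x_f$ with $f(x_f)\le 0$) says exactly that each $K_f$ is nonempty. The desired conclusion is that $\bigcap_{f\in\mathcal{F}}K_f\neq\emptyset$. Because $K$ is compact, it suffices to show that $\{K_f\}_{f\in\mathcal{F}}$ has the finite intersection property, i.e. that every finite subfamily $f_1,\dots,f_n$ admits a common point $\varphi_0$ with $f_i(\varphi_0)\le 0$ for all $i$.

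First I would reduce the finite case to a finite-dimensional separation argument. Given $f_1,\dots,f_n\in\mathcal{F}$, consider the map $T\colon K\to\mathbb{R}^n$, $T(\varphi)=(f_1(\varphi),\dots,f_n(\varphi))$, and the set $D=\{t\in\mathbb{R}^n:\text{there is }\varphi\in K\text{ with }f_i(\varphi)\le t_i\text{ for all }i\}$. Using that each $f_i$ is convex and that $K$ is convex (in the applications $K=B_{X^\ast}$ with the weak-star topology, which is compact and convex by Alaoglu's theorem), the set $D$ is convex; it is moreover upward closed, i.e. $D+[0,\infty)^n\subseteq D$. The finite statement $\inf_{\varphi\in K}\max_i f_i(\varphi)\le 0$ is equivalent to $0\in D$, so I would argue by contradiction and suppose $0\notin D$.

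The heart of the argument is then to separate $0$ from the convex set $D$. A nonzero functional $\lambda=(\lambda_1,\dots,\lambda_n)$ separating them must have all $\lambda_i\ge 0$, since $D$ is upward closed (otherwise the functional would be unbounded below on $D$); after normalizing I may assume $\sum_i\lambda_i=1$, so $\lambda$ lies in the simplex $\Delta$. Separation then gives $\inf_{\varphi\in K}\sum_{i=1}^{n}\lambda_i f_i(\varphi)\ge 0$, with strict inequality obtained from the (compactness-guaranteed) attainment of the infimum together with lower semicontinuity. This is precisely where the concavity of the family enters: the convex combination $\sum_i\lambda_i f_i$ is dominated by (or equals) some member $g\in\mathcal{F}$, and since $g(x_g)\le 0$ for some $x_g\in K$ we get $\inf_{\varphi\in K}\sum_i\lambda_i f_i(\varphi)\le 0$, contradicting the strict separation. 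Hence $0\in D$, every finite subfamily has a common subzero point, and compactness upgrades this to a single $\varphi_0\in K$ working for all of $\mathcal{F}$.

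The step I expect to be the main obstacle is exactly this finite case — equivalently, the minimax interchange $\inf_{\varphi}\max_i f_i(\varphi)=\max_{\lambda\in\Delta}\inf_\varphi\sum_i\lambda_i f_i(\varphi)$ hidden inside the separation. Two points require care. First, marrying the two hypotheses correctly: the \emph{individual} convexity and lower semicontinuity of the $f_i$ produce the convex, closed object to be separated, whereas the \emph{concavity of the family} is what converts ``each single function is $\le 0$ somewhere'' into ``each convex combination is $\le 0$ somewhere,'' which is the fact that the separating functional contradicts. Second, the passage between non-strict and strict separation when $0$ lies on the boundary of $D$, which is resolved by using lower semicontinuity and compactness to guarantee that the relevant infimum is attained. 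Everything else (closedness of the sublevel sets and the compactness/finite-intersection reduction) is routine.
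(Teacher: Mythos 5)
First, a point of order: the paper never proves this lemma. It is quoted as Ky Fan's classical result and used as a black box in the proof of Theorem \ref{gpdt}, so there is no proof of record to compare against; your proposal has to stand on its own. Your overall architecture is the standard route to Ky Fan's lemma and is sound: reduce to finitely many functions via compactness and the finite intersection property (each sublevel set $K_f=\{f\le 0\}$ is closed by lower semicontinuity and nonempty by hypothesis), then treat $f_1,\dots,f_n$ by separating $0$ from the convex, upward-closed set $D\subset\mathbb{R}^n$, and finally use concavity of the family to contradict the separation. You also correctly supply a hypothesis the paper's statement omits: for ``convex'' and ``concave'' to make sense, $K$ must be a convex subset of a topological vector space (in the paper's application $K$ is a product of sets of probability measures $P(K_j)$, which is indeed compact and convex).

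The genuine gap is the strict-separation step. If $0\notin D$, separating a point from a convex set only yields a nonzero $\lambda$ with $\sum_i\lambda_i f_i(\varphi)\ge 0$ for all $\varphi\in K$; combined with concavity (some $g\in\mathcal{F}$ dominates $\sum_i\lambda_i f_i$, and $g(x_g)\le 0$) this gives $\inf_\varphi\sum_i\lambda_i f_i(\varphi)=0$, which is \emph{not} a contradiction. Your proposed upgrade to strict inequality ``from compactness-guaranteed attainment of the infimum together with lower semicontinuity'' does not work: if the infimum $0$ is attained at $\varphi^\ast$, then $\sum_i\lambda_i f_i(\varphi^\ast)=0$ is perfectly consistent with $0\notin D$, because the coordinates $f_i(\varphi^\ast)$ may have mixed signs (e.g.\ $f_1(\varphi^\ast)=1$, $f_2(\varphi^\ast)=-1$, $\lambda=(\tfrac12,\tfrac12)$). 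So as written the contradiction never materializes.

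The repair uses exactly the tools you name, but aimed at a different object: show that $D$ itself is \emph{closed}. If $t^{(k)}\to t$ with witnesses $\varphi_k\in K$ satisfying $f_i(\varphi_k)\le t_i^{(k)}$, pass to a convergent subnet $\varphi_{k_\alpha}\to\varphi$ (compactness) and use lower semicontinuity to get $f_i(\varphi)\le\liminf_\alpha f_i(\varphi_{k_\alpha})\le t_i$, hence $t\in D$. Once $D$ is closed, $0\notin D$ means $0\notin\overline{D}$, and strict separation of a point from a closed convex set gives $\lambda\ne 0$ and $\epsilon>0$ with $\sum_i\lambda_i f_i(\varphi)\ge\epsilon$ for all $\varphi\in K$ (then $\lambda_i\ge 0$ by upward closedness, and normalize). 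Concavity now produces $g\in\mathcal{F}$ with $g\ge\epsilon>0$ everywhere on $K$, flatly contradicting $g(x_g)\le 0$. With this one substitution your argument is complete and correct.
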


Let $X_{1},...,X_{n}$, $Y$ and $E_{1},...,E_{r}$ be (arbitrary) non-void sets,
$\mathcal{H}$ be a family of mappings from $X_{1}\times\cdots\times X_{n}$ to
$Y$ . Let also $K_{1},..,K_{t}$ be compact Hausdorff topological spaces,
$G_{1},...,G_{t}$ be Banach spaces and suppose that the maps%
\[
\left\{
\begin{array}
[c]{l}%
R_{j}\colon K_{j}\times E_{1}\times\cdots\times E_{r}\times G_{j}%
\longrightarrow\lbrack0,\infty)\text{, }j=1,...,t\\
S\colon{\mathcal{H}}\times E_{1}\times\cdots\times E_{r}\times G_{1}%
\times\cdots\times G_{t}\longrightarrow\lbrack0,\infty)
\end{array}
\right.
\]
satisfy:

\noindent\textbf{(1)} For each $x^{(l)}\in E_{l}$ and $b\in G_{j}$, with
$(j,l)\in\{1,...,t\}\times\{1,...,r\}$ the mapping%
\[
\left(  R_{j}\right)  _{x^{(1)},...,x^{(r)},b}\colon K_{j}\longrightarrow
\lbrack0,\infty)~\text{defined by }~\left(  R_{j}\right)  _{x^{(1)}%
,...,x^{(r)},b}(\varphi)=R_{j}(\varphi,x^{(1)},...,x^{(r)},b)
\]
is continuous.\newline\noindent\textbf{(2) }The following inequalities hold:%
\begin{equation}
\left\{
\begin{array}
[c]{l}%
R_{j}(\varphi,x^{(1)},...,x^{(r)},\eta_{j}b^{(j)})\leq\eta_{j}R_{j}\left(
\varphi,x^{(1)},...,x^{(r)},b^{(j)}\right)  \text{ }\\
S(f,x^{(1)},...,x^{(r)},\alpha_{1}b^{(1)},...,\alpha_{t}b^{(t)})\geq\alpha
_{1}...\alpha_{t}S(f,x^{(1)},...,x^{(r)},b^{(1)},...,b^{(t)})
\end{array}
\right.  \label{novaq}%
\end{equation}
for every $\varphi\in K_{j},x^{(l)}\in E_{l}$ (with $l=1,...,r$)$,0\leq
\eta_{j},\alpha_{j}\leq1,$ $b_{j}\in G_{j},$ with $j=1,...,t$ and
$f\in{\mathcal{H}}$.

\begin{definition}
\label{quatro}If $0<p_{1},...,p_{t},p<\infty,$ with\textrm{\textrm{\textrm{\ }%
}}$\frac{1}{p}=\frac{1}{p_{1}}+\cdots+\frac{1}{p_{t}}$, a mapping
$f:X_{1}\times\cdots\times X_{n}\rightarrow Y$ in $\mathcal{H}$ is said to be
$R_{1},...,R_{t}$-$S$-abstract $(p_{1},...,p_{t})$-summing if there is
a\textrm{\textrm{\textrm{\ }}}constant $C>0$ so that%
\begin{equation}
\left(  \sum_{j=1}^{m}S(f,x_{j}^{(1)},...,x_{j}^{(r)},b_{j}^{(1)}%
,...,b_{j}^{(t)})^{p}\right)  ^{\frac{1}{p}}\leq C{\displaystyle\prod
\limits_{k=1}^{t}}\sup_{\varphi\in K_{k}}\left(  \sum_{j=1}^{m}R_{k}\left(
\varphi,x_{j}^{(1)},...,x_{j}^{(r)},b_{j}^{(k)}\right)  ^{p_{k}}\right)
^{\frac{1}{p_{k}}} \label{cam-errado}%
\end{equation}
for all $x_{1}^{(s)},\ldots,x_{m}^{(s)}\in E_{s},$ $b_{1}^{(l)},\ldots
,b_{m}^{(l)}\in G_{l},$ $m\in\mathbb{N}$ and $(s,l)\in\{1,...,r\}\times
\{1,...,t\}$.
\end{definition}

The proof mimics the steps of the particular case proved in \cite{psmz}, and
hence we omit some details. Due the more abstract environment, the new proof
has extra technicalities but just in the final part of the proof a more
important care will be needed when dealing with the parameter $\beta.$

As in the proof of \cite{psmz}, we need the following lemma (see \cite[Page
17]{Hardy}):

\begin{lemma}
\label{yy}Let $0<p_{1},...,p_{n},p<\infty$ be so that $1/p=%
{\textstyle\sum\limits_{j=1}^{n}}
1/p_{j}$. Then%
\[
\frac{1}{p}%
{\displaystyle\prod\limits_{j=1}^{n}}
q_{j}^{p}\leq%
{\displaystyle\sum\limits_{j=1}^{n}}
\frac{1}{p_{j}}q_{j}^{p_{j}}%
\]
regardless of the choices of $q_{1},..,q_{n}\geq0.$
\end{lemma}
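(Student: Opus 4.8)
The plan is to recognize this inequality as the weighted arithmetic--geometric mean inequality in disguise, obtained after a suitable change of variables. The key observation is that the constraint $\frac{1}{p}=\sum_{j=1}^{n}\frac{1}{p_{j}}$ is exactly what is needed to turn the exponents into convex weights summing to one.

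First I would introduce the numbers $\theta_{j}:=\frac{p}{p_{j}}$ for $j=1,\ldots,n$. Each $\theta_{j}$ is nonnegative, and the hypothesis gives $\sum_{j=1}^{n}\theta_{j}=p\sum_{j=1}^{n}\frac{1}{p_{j}}=1$, so the $\theta_{j}$ form a system of weights. Next I would set $a_{j}:=q_{j}^{p_{j}}\geq 0$ and observe that $a_{j}^{\theta_{j}}=q_{j}^{p_{j}\cdot(p/p_{j})}=q_{j}^{p}$, whence $\prod_{j=1}^{n}a_{j}^{\theta_{j}}=\prod_{j=1}^{n}q_{j}^{p}$, while $\sum_{j=1}^{n}\theta_{j}a_{j}=\sum_{j=1}^{n}\frac{p}{p_{j}}q_{j}^{p_{j}}$. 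With these identifications the claim is precisely the weighted AM--GM inequality $\prod_{j=1}^{n}a_{j}^{\theta_{j}}\leq\sum_{j=1}^{n}\theta_{j}a_{j}$ divided through by $p$.

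To establish the weighted AM--GM step itself I would distinguish two cases. If every $q_{j}>0$, then all $a_{j}>0$ and I would take logarithms and invoke the concavity of $\log$ (Jensen's inequality applied to the weights $\theta_{j}$), which yields $\log\big(\prod_{j=1}^{n} a_{j}^{\theta_{j}}\big)=\sum_{j=1}^{n}\theta_{j}\log a_{j}\leq\log\big(\sum_{j=1}^{n}\theta_{j}a_{j}\big)$; exponentiating gives the inequality. If some $q_{j}=0$, the left-hand product vanishes while the right-hand sum is nonnegative, so the inequality holds trivially.

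There is essentially no serious obstacle here. The only points requiring attention are correctly spotting the substitution $a_{j}=q_{j}^{p_{j}}$ (which is what converts the hypothesis on the $p_{j}$ into the normalization $\sum_{j}\theta_{j}=1$) and separately disposing of the degenerate case where one of the $q_{j}$ vanishes, since the logarithmic form of Jensen's inequality presupposes strictly positive arguments. As an alternative one could avoid Jensen entirely and derive the weighted AM--GM by induction on $n$ based on the two-variable Young inequality $ab\leq\frac{1}{s}a^{s}+\frac{1}{s'}b^{s'}$ with $\frac{1}{s}+\frac{1}{s'}=1$; this is the route taken in the cited reference \cite{Hardy}, but the Jensen argument is the shortest.
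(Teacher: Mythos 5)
Your proof is correct and complete. Note that the paper itself does not prove Lemma \ref{yy} at all: it simply cites \cite[Page 17]{Hardy} and moves on, so your argument supplies a proof where the paper offers only a reference. Your reduction is exactly right: the weights $\theta_{j}=p/p_{j}$ sum to $1$ precisely because of the hypothesis $1/p=\sum_{j}1/p_{j}$, the substitution $a_{j}=q_{j}^{p_{j}}$ turns the product $\prod_{j}a_{j}^{\theta_{j}}$ into $\prod_{j}q_{j}^{p}$ and the weighted sum $\sum_{j}\theta_{j}a_{j}$ into $\sum_{j}(p/p_{j})q_{j}^{p_{j}}$, and dividing the weighted AM--GM inequality by $p$ gives the stated inequality. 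The Jensen step (concavity of $\log$) is the standard and shortest route to weighted AM--GM, and you correctly isolate the degenerate case $q_{j}=0$, where the logarithmic argument does not apply but the inequality is trivial since the left-hand side vanishes. Your closing remark is also accurate: the cited Hardy--Littlewood--P\'olya text derives the same inequality by induction from the two-variable Young inequality, which is more elementary but longer; either route is acceptable, and nothing in the paper's later use of the lemma (the proof of Theorem \ref{gpdt}, where it is applied with the Dirac measures $\mu_{k}^{g}=\delta_{\varphi_{k}}$) depends on which proof one adopts.
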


Now we are ready to prove the aforementioned theorem:

\begin{theorem}
\label{gpdt}A map $f\in{\mathcal{H}}$ is $R_{1},...,R_{t}$-$S$-abstract
$(p_{1},...,p_{t})$-summing if and only if there is a constant $C>0$ and Borel
probability measures $\mu_{j}$ on $K_{j}$ such that%
\begin{equation}
S(f,x^{(1)},...,x^{(r)},b^{(1)},...,b^{(t)})\leq C%
{\displaystyle\prod\limits_{j=1}^{t}}
\left(  \int_{K_{j}}R_{j}\left(  \varphi,x^{(1)},...,x^{(r)},b^{(j)}\right)
^{p_{j}}d\mu_{j}\right)  ^{1/p_{j}} \label{2}%
\end{equation}
for all $x^{(l)}\in E_{l},$ $l=1,...,r$ and $b^{(j)}\in G_{j}$, with
$j=1,...,t.$
\end{theorem}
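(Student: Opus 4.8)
The plan is to prove the equivalence in \thmref{gpdt} by first disposing of the easy direction and then attacking the hard direction with Ky Fan's Lemma, exactly mirroring Pietsch's classical argument. The easy implication is $(\ref{2})\Rightarrow$ ``$R_1,\ldots,R_t$-$S$-abstract $(p_1,\ldots,p_t)$-summing'': assuming the domination inequality, I raise both sides to the power $p$, sum over $j=1,\ldots,m$, and apply the generalized H\"older inequality for the product of the $t$ integrals (with exponents $p_j$ satisfying $1/p=\sum 1/p_j$) to recover a bound of the form $(\ref{cam-errado})$. The only subtle point is that one needs Minkowski/H\"older to pass from $\int_{K_j}(\cdots)d\mu_j$ to $\sup_{\varphi\in K_j}(\cdots)$, which is immediate since $\mu_j$ is a probability measure and the integrand is bounded by its supremum.

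The substantial direction is the converse. Here I would follow Pietsch's scheme via Ky Fan. First I would construct, for each finite family of data $\bigl(x_j^{(1)},\ldots,x_j^{(r)},b_j^{(1)},\ldots,b_j^{(t)}\bigr)_{j=1}^m$, an associated function on the product compact space $K_1\times\cdots\times K_t$ (equipped with the product topology, hence compact Hausdorff). The natural candidate, taking into account the multiplicative structure $(\ref{novaq})$ and the auxiliary \lemref{yy}, is of the form
\[
\phi(\varphi_1,\ldots,\varphi_t)=\sum_{j=1}^{m}S(f,x_j^{(1)},\ldots,x_j^{(r)},b_j^{(1)},\ldots,b_j^{(t)})^{p}
-C^{p}\sum_{j=1}^{m}\frac{p}{p_k}\sum_{k=1}^{t} R_k(\varphi_k,x_j^{(1)},\ldots,x_j^{(r)},b_j^{(k)})^{p_k},
\]
suitably normalized; the role of \lemref{yy} is precisely to dominate the product $\prod_k(\cdots)^{p}$ appearing in $(\ref{cam-errado})$ by a weighted sum $\sum_k \tfrac{1}{p_k}(\cdots)^{p_k}$, converting the multiplicative constraint into an additive one so that each $\phi$ becomes, as a function of $(\varphi_1,\ldots,\varphi_t)$, concave (in fact affine) in each $R_k$-block while being lower semicontinuous and convex in the sense required by Ky Fan. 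I would verify that the family $\mathcal{F}$ of all such $\phi$ is concave as a family of functions and that each $\phi$ attains a nonpositive value at some point of $K_1\times\cdots\times K_t$ (this uses the summing hypothesis $(\ref{cam-errado})$ together with \lemref{yy}). Ky Fan then yields a single point $(\varphi_1^0,\ldots,\varphi_t^0)$ at which every $\phi$ is nonpositive, which upon taking supprema over the data and a standard separation/Hahn--Banach or direct limiting argument produces the probability measures $\mu_1,\ldots,\mu_t$ satisfying $(\ref{2})$.

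The main obstacle, as the authors themselves flag, will be the careful bookkeeping around the parameter $\beta$ (the normalization exponent) and in particular ensuring that the product structure of $(\ref{cam-errado})$ over $t$ distinct compact spaces $K_1,\ldots,K_t$ can be linearized simultaneously. The difficulty is that Ky Fan's Lemma operates on a single compact space, so I must work on the product $K_1\times\cdots\times K_t$ and decouple the $t$ integrals into $t$ separate measures $\mu_j$; the inequality $(\ref{novaq})$ is exactly what allows the weights $b^{(j)}$ to be absorbed in a compatible way across the two sides, and \lemref{yy} is the device that makes the product of $t$ norms Ky-Fan-admissible. I expect the delicate step to be checking the concavity of the family $\mathcal{F}$ and the lower semicontinuity of each member after the linearization, since the exponents $p_k$ may be less than $1$, so the usual convexity arguments require the reformulation through \lemref{yy} rather than a naive application. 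Once the product compact space and \lemref{yy} are in place, the remaining passage from the Ky Fan point to the measures is routine and parallels \cite{psmz}.
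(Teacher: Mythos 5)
Your proposal goes wrong at the decisive step: you apply Ky Fan's Lemma to functions $\phi$ defined on the product $K_1\times\cdots\times K_t$. The $K_j$ are \emph{arbitrary} compact Hausdorff spaces with no linear or convex structure, so the hypotheses of Ky Fan's Lemma (a \emph{concave} family of \emph{convex}, lower semicontinuous functions) cannot even be formulated there; no amount of rewriting via \lemref{yy} fixes this, since convexity of $\phi$ in the variables $(\varphi_1,\ldots,\varphi_t)$ is meaningless. Worse, even if some minimax principle did produce a single point $(\varphi_1^0,\ldots,\varphi_t^0)$ at which every $\phi$ is nonpositive, that would yield the domination (\ref{2}) with Dirac measures $\mu_j=\delta_{\varphi_j^0}$, which is strictly stronger than the theorem and false in general (already in the classical linear case, $p$-summing operators are not dominated by a single point evaluation $|\varphi_0(\cdot)|$). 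The correct move, and the one the paper makes, is to run Ky Fan not on $K_1\times\cdots\times K_t$ but on $P(K_1)\times\cdots\times P(K_t)$, the product of the weak-star compact \emph{convex} sets of Borel probability measures in $C(K_k)^{\ast}$: the functions
\[
g\left((\mu_k)_{k=1}^{t}\right)=\sum_{j=1}^{m}\left[\frac{1}{p}S(f,x_j^{(1)},\ldots,x_j^{(r)},b_j^{(1)},\ldots,b_j^{(t)})^{p}-C^{p}\sum_{k=1}^{t}\frac{1}{p_k}\int_{K_k}R_k\left(\varphi,x_j^{(1)},\ldots,x_j^{(r)},b_j^{(k)}\right)^{p_k}d\mu_k\right]
\]
are affine and continuous in the measures (so convexity is automatic, irrespective of whether $p_k<1$, which disposes of your worry on that point), the family of all such $g$ is concave, and the hypothesis (\ref{cam-errado}) together with \lemref{yy} and the sup-attaining points $\varphi_k\in K_k$ shows each $g$ is nonpositive at a suitable tuple of Dirac measures. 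Ky Fan then hands you the common measures $\overline{\mu_1},\ldots,\overline{\mu_t}$ directly; no subsequent Hahn--Banach or limiting argument is needed, and your vague appeal to one cannot substitute for putting the convex structure where it belongs.

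Your proposal also mislocates the role of $\beta$. It has nothing to do with making the Ky Fan step admissible: it enters \emph{after} Ky Fan, when the additive inequality (\ref{new}) (with $m=1$) must be converted back into the product form (\ref{2}). There one sets $\tau_k=\left(\int_{K_k}R_k(\varphi,x^{(1)},\ldots,x^{(r)},b^{(k)})^{p_k}d\overline{\mu_k}\right)^{1/p_k}$, scales $b^{(k)}$ by $\vartheta_k=(\tau_k\beta^{1/(pp_k)})^{-1}$, and uses the restricted homogeneity (\ref{novaq}) --- valid only for scaling factors in $[0,1]$, which is exactly why $\beta$ must be taken large as in (\ref{bbbvvv}) --- to obtain (\ref{wwss}); the degenerate case where some $\tau_j=0$ is then handled by letting $\beta\rightarrow\infty$, forcing $S=0$. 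Your proposal never addresses this passage from sum to product, and without it the argument stops at an additive bound that is not the statement of \thmref{gpdt}.
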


\begin{proof}
One direction is canonical and we omit. Let us suppose that $f\in{\mathcal{H}%
}$ is $R_{1},...,R_{t}$-$S$-abstract $(p_{1},...,p_{t})$-summing. Consider the
compact sets $P(K_{k})$ of the probability measures in $C(K_{k})^{\ast}$, for
all $k=1,...,t$. For each $(x_{j}^{(l)})_{j=1}^{m}$ in $E_{l}$ and
$(b_{j}^{(s)})_{j=1}^{m}$ in $G_{s},$ with $(s,l)\in\{1,...,t\}\times
\{1,...,r\},$ let%
\[
g=g_{(x_{j}^{(l)})_{j=1}^{m},(b_{j}^{(s)})_{j=1}^{m},(s,l)\in\{1,...,t\}\times
\{1,...,r\}}:P(K_{1})\times\cdots\times P(K_{t})\rightarrow\mathbb{R}%
\]
be defined by%
\begin{align*}
&  g\left(  (\mu_{j})_{j=1}^{t}\right)  =\\
&  =\sum_{j=1}^{m}\left[  \frac{1}{p}S(f,x_{j}^{(1)},...,x_{j}^{(r)}%
,b_{j}^{(1)},...,b_{j}^{(t)})^{p}-C^{p}\sum_{k=1}^{t}\frac{1}{p_{k}}%
\int_{K_{k}}R_{k}\left(  \varphi,x_{j}^{(1)},...,x_{j}^{(r)},b_{j}%
^{(k)}\right)  ^{p_{k}}d\mu_{k}\right]  .
\end{align*}
As usual, the family $\mathcal{F}$ of all such $g$'s is concave and one can
also easily prove that every $g\in\mathcal{F}$ is convex and continuous.
Besides, for each $g\in\mathcal{F}$ there are measures $\mu_{j}^{g}\in
P(K_{j}),$ $j=1,...,t$, such that%
\[
g(\mu_{1}^{g},...,\mu_{t}^{g})\leq0.
\]
In fact, using the compactness of each $K_{k}$ ($k=1,...,t$), the continuity
of $\left(  R_{k}\right)  _{x_{j}^{(1)},...,x_{j}^{(r)},b_{j}^{(k)}},$ there
are $\varphi_{k}\in K_{k}$ so that%
\[
\sum_{j=1}^{m}R_{k}\left(  \varphi_{k},x_{j}^{(1)},...,x_{j}^{(r)},b_{j}%
^{(k)}\right)  ^{p_{k}}=\sup_{\varphi\in K_{k}}\sum_{j=1}^{m}R_{k}\left(
\varphi,x_{j}^{(1)},...,x_{j}^{(r)},b_{j}^{(k)}\right)  ^{p_{k}}.
\]
Now, with the Dirac measures $\mu_{k}^{g}=\delta_{\varphi_{k}},$ $k=1,...,t,$
and Lemma \ref{yy} we get%
\[
g(\mu_{1}^{g},...,\mu_{t}^{g})\leq0.
\]
So, Ky Fan's Lemma asserts that there are $\overline{\mu_{j}}\in P(K_{j}),$
$j=1,...,t,$ so that%
\[
g(\overline{\mu_{1}},...,\overline{\mu_{t}})\leq0
\]
for all $g\in\mathcal{F}$. Hence
\[
\sum_{j=1}^{m}\left[  \frac{1}{p}S(f,x_{j}^{(1)},...,x_{j}^{(r)},b_{j}%
^{(1)},...,b_{j}^{(t)})^{p}\right]  -C^{p}\sum_{k=1}^{t}\frac{1}{p_{k}}%
\int_{K_{k}}\sum_{j=1}^{m}R_{k}\left(  \varphi,x_{j}^{(1)},...,x_{j}%
^{(r)}b_{j}^{(k)}\right)  ^{p_{k}}d\overline{\mu_{k}}\leq0
\]
and from the particular case $m=1$ we obtain%
\begin{equation}
\frac{1}{p}S(f,x^{(1)},...,x^{(r)},b^{(1)},...,b^{(t)})^{p}\leq C^{p}%
\sum_{k=1}^{t}\frac{1}{p_{k}}\int_{K_{k}}R_{k}\left(  \varphi,x^{(1)}%
,...,x^{(r)},b^{(k)}\right)  ^{p_{k}}d\overline{\mu_{k}}. \label{new}%
\end{equation}
If $x^{(1)},...,x^{(r)},b^{(1)},...,b^{(t)}$ are given and, for $k=1,...,t,$
define
\[
\tau_{k}:=\left(  \int_{K_{k}}R_{k}\left(  \varphi,x^{(1)},...,x^{(r)}%
,b^{(k)}\right)  ^{p_{k}}d\overline{\mu_{k}}\right)  ^{1/p_{k}}.
\]
If $\tau_{k}=0$ for every $k$ then, the result is immediate. Let us now
suppose that $\tau_{j}$ is not zero for some $j\in\{1,...,t\}$. Consider
\[
V=\{j\in\{1,..,t\};\tau_{j}\neq0\}
\]
and $\beta>0$ big enough to get
\begin{equation}
0<\left(  \tau_{j}\beta^{\frac{1}{pp_{j}}}\right)  ^{-1}<1\text{ for every
}j\in V. \label{bbbvvv}%
\end{equation}
The above condition is necessary in view of (\ref{novaq}). Consider, also,%
\[
\vartheta_{j}=\left\{
\begin{array}
[c]{c}%
\left(  \tau_{j}\beta^{\frac{1}{pp_{j}}}\right)  ^{-1}\text{ if }j\in V\\
1\text{ if }j\notin V.
\end{array}
\right.
\]
Thus, since $0<\vartheta_{j}\leq1,$ we have
\begin{align*}
\frac{1}{p}S(f,x^{(1)},...,x^{(r)},\vartheta_{1}b^{(1)},...,\vartheta
_{t}b^{(t)})^{p}  &  \leq C^{p}\sum_{k=1}^{t}\frac{1}{p_{k}}\int_{K_{k}}%
R_{k}\left(  \varphi,x^{(1)},...,x^{(r)},\vartheta_{k}b^{(k)}\right)  ^{p_{k}%
}d\overline{\mu_{k}}\\
&  \leq C^{p}\sum_{k\in V}\frac{1}{p_{k}}\left(  \tau_{k}\beta^{\frac
{1}{pp_{k}}}\right)  ^{-p_{k}}\tau_{k}^{p_{k}}\\
&  \leq\frac{C^{p}}{p}\frac{1}{\beta^{\frac{1}{p}}}%
\end{align*}
and%
\begin{equation}
S(f,x^{(1)},...,x^{(r)},b^{(1)},...,b^{(t)})^{p}\leq C^{p}\beta^{\left(
{\textstyle\sum\nolimits_{j\in V}}
1/p_{j}\right)  -1/p}%
{\textstyle\prod\nolimits_{j\in V}}
\tau_{j}^{p}. \label{wwss}%
\end{equation}
If $V\neq\{1,...,t\}$, then
\[
\frac{1}{p}-%
{\displaystyle\sum\nolimits_{j\in V}}
\frac{1}{p_{j}}>0.
\]
Note that it is possible to make $\beta\rightarrow\infty$ in (\ref{wwss}),
since it does not contradict (\ref{bbbvvv}); so we get%
\[
S(f,x^{(1)},...,x^{(r)},b^{(1)},...,b^{(t)})^{p}=0
\]
and we again reach (\ref{2}). The case $V=\{1,...,t\}$ is immediate.
\end{proof}

\subsection{Application: The (general) Unified PDT and the case of dominated
multilinear mappings}

By choosing $r=t=n=1$ in Theorem \ref{gpdt} we obtain an improvement of the
Unified Pietsch Domination Theorem from \cite{BPRn}. In fact, we obtain
precisely \cite[Theorem 2.1]{psjmaa} which is essentially the general unified
PDT (we just need to repeat the trick used in \cite[Theorem 3.1]{psjmaa}).

It is interesting to note that, in the case $n>1$, the trick used in
\cite[Theorem 3.1]{psjmaa} is essentially what emerges the notion of weighted
summability. In resume, this trick works perfectly for $n=1$, but for other
cases it forces us to deal with weighted summability. So, one shall not expect
for the possible relaxation of conditions (\ref{novaq}) for the validity of
Theorem 4.6.

As pointed out in the introduction, contrary to what happens in \cite{BPRn},
our theorem straightforwardly recovers the domination theorem for
$(q_{1},...,q_{n})$-dominated $n$-linear mappings (with $1/q=1/q_{1}%
+\cdots+1/q_{n}$). In fact, we just need to choose%

\[
\left\{
\begin{array}
[c]{c}%
t=n\\
G_{j}=X_{j}\text{ and }K_{j}=B_{X_{j}^{\ast}}\text{ for all }j=1,...,n\\
E_{j}=\mathbb{K},j=1,...,r\text{ }\\
\mathcal{H}=\mathcal{L}(X_{1},...,X_{n};Y)\\
p_{j}=q_{j}\text{ for all }j=1,...,n\\
S(T,x^{(1)},...,x^{(r)},b^{(1)},...,b^{(n)})=\left\Vert T(b^{(1)}%
,...,b^{(n)})\right\Vert \\
R_{k}(\varphi,x^{(1)},...,x^{(r)},b^{(k)})=\left\vert \varphi(b^{(k)}%
)\right\vert \text{ for all }k=1,...,n.
\end{array}
\right.
\]
So, with these choices, $T$ is $R_{1},..,R_{n}$-$S$ abstract $(q_{1}%
,...,q_{n})$-summing precisely when $T$ is $(q_{1},...,q_{n})$-dominated. In
this case Theorem \ref{gpdt} tells us that there is a constant $C>0$ and there
are measures $\mu_{k}$ on $K_{k}$, $k=1,...,n,$ so that%
\[
S(T,x^{(1)},...,x^{(r)},b^{(1)},...,b^{(n)})\leq C%
{\displaystyle\prod\limits_{k=1}^{n}}
\left(  \int_{K_{k}}R_{k}\left(  \varphi,x^{(1)},...,x^{(r)},b^{(k)}\right)
^{q_{k}}d\mu_{k}\right)  ^{\frac{1}{q_{k}}},
\]
i.e.,%
\[
\left\Vert T(b^{(1)},...,b^{(n)})\right\Vert \leq C%
{\displaystyle\prod\limits_{k=1}^{n}}
\left(  \int_{K_{k}}\left\vert \varphi(b^{(k)})\right\vert ^{q_{k}}d\mu
_{k}\right)  ^{\frac{1}{q_{k}}}.
\]

\subsection{Application: The PDT for Cohen strongly $q$-summing operators
\label{ko}}

The class of Cohen strongly $q$-summing multilinear operators was introduced
by D. Achour and L. Mezrag in \cite{AMe}. Let $1<q<\infty$ and $X_{1}%
,...,X_{n},Y$ arbitrary Banach spaces. If $q>1$, then $q^{\ast}$ denotes the
real number satisfying $1/q+1/q^{\ast}=1.$ A continuous $n$-linear operator
$T:X_{1}\times\cdots\times X_{n}\rightarrow Y$ is Cohen strongly $q$-summing
if and only if there is a constant $C>0$ such that for any positive integer
$m,$ $x_{1}^{(j)},...,x_{m}^{(j)}$ in $X_{j}$ ($j=1,...,n$) and any
$y_{1}^{\ast},...,y_{m}^{\ast}$ in $Y^{\ast}$, the following inequality hold:
\[%
{\displaystyle\sum\limits_{i=1}^{m}}
\left\vert y_{i}^{\ast}\left(  T(x_{i}^{(1)},...,x_{i}^{(n)})\right)
\right\vert \leq C\left(
{\displaystyle\sum\limits_{i=1}^{m}}
{\displaystyle\prod\limits_{j=1}^{n}}
\left\Vert x_{i}^{(j)}\right\Vert ^{q}\right)  ^{1/q}\sup_{y^{\ast\ast}\in
B_{Y^{\ast\ast}}}\left(
{\displaystyle\sum\limits_{i=1}^{m}}
\left\vert y^{\ast\ast}(y_{i}^{\ast})\right\vert ^{q^{\ast}}\right)
^{1/q^{\ast}}.
\]
In the same paper the authors also prove the following Pietsch-type theorem:

\begin{theorem}
[Achour-Mezrag]A continuous $n$-linear mapping $T:X_{1}\times\cdots\times
X_{n}\rightarrow Y$ is Cohen strongly $q$-summing if and only if there is a
constant $C>0$ and a probability measure $\mu$ on $B_{Y^{\ast\ast}}$ so that
for all $(x^{(1)},...,x^{(n)},y^{\ast})$ in $X_{1}\times\cdots\times
X_{n}\times Y^{\ast}$ the inequality%
\begin{equation}
\left\vert y^{\ast}\left(  T(x^{(1)},...,x^{(n)})\right)  \right\vert \leq
C\left(
{\displaystyle\prod\limits_{k=1}^{n}}
\left\Vert x^{(k)}\right\Vert \right)  \left(  \int_{B_{Y^{\ast\ast}}%
}\left\vert y^{\ast\ast}(y^{\ast})\right\vert ^{q^{\ast}}d\mu\right)
^{\frac{1}{q^{\ast}}} \label{qqaa}%
\end{equation}
is valid.
\end{theorem}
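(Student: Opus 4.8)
The plan is to realize this statement as a direct specialization of the full general Pietsch Domination Theorem (Theorem \ref{gpdt}), choosing the abstract data so that ``Cohen strongly $q$-summing'' becomes \emph{verbatim} ``$R_{1},R_{2}$-$S$-abstract $(q,q^{\ast})$-summing''. Concretely I would take $t=2$, $p=1$, $p_{1}=q$ and $p_{2}=q^{\ast}$ (so that $\frac{1}{p}=\frac{1}{p_{1}}+\frac{1}{p_{2}}$, since $\frac{1}{q}+\frac{1}{q^{\ast}}=1$), set $\mathcal{H}=\mathcal{L}(X_{1},\dots,X_{n};Y)$, let $K_{1}$ be a one-point space and $K_{2}=B_{Y^{\ast\ast}}$ with its weak-star topology (compact by Banach--Alaoglu). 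The crucial bookkeeping is the distribution of the summation variables: I would scale exactly one entry, taking $b^{(1)}\in G_{1}:=X_{1}$ and $b^{(2)}\in G_{2}:=Y^{\ast}$, while the remaining vectors become the unscaled variables $E_{1}=X_{2},\dots,E_{n-1}=X_{n}$. Finally I would define
\[
S(T,x^{(2)},\dots,x^{(n)},b^{(1)},b^{(2)})=\left\vert b^{(2)}\!\left(T(b^{(1)},x^{(2)},\dots,x^{(n)})\right)\right\vert,\quad R_{1}=\Vert b^{(1)}\Vert\prod_{k=2}^{n}\Vert x^{(k)}\Vert,\quad R_{2}=\left\vert y^{\ast\ast}(b^{(2)})\right\vert .
\]

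Next I would check the hypotheses of Theorem \ref{gpdt}. The continuity requirement is immediate: $R_{1}$ sits over the one-point space $K_{1}$, and $R_{2}$ is the map $y^{\ast\ast}\mapsto\left\vert y^{\ast\ast}(b^{(2)})\right\vert$, which is weak-star continuous because $b^{(2)}\in Y^{\ast}$. For the multiplicativity conditions (\ref{novaq}), scaling $b^{(1)}$ or $b^{(2)}$ by $\eta\in[0,1]$ multiplies $R_{1}$, respectively $R_{2}$, by exactly $\eta$, so the first inequality holds with equality; and since $T$ is linear in each entry and $b^{(2)}$ is a functional, $S(\dots,\alpha_{1}b^{(1)},\alpha_{2}b^{(2)})=\alpha_{1}\alpha_{2}S(\dots,b^{(1)},b^{(2)})$, which gives the required super-multiplicativity (again with equality).

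With these choices the abstract summing inequality (\ref{cam-errado}) becomes $\sum_{i}\left\vert y_{i}^{\ast}(T(x_{i}^{(1)},\dots,x_{i}^{(n)}))\right\vert\leq C\big(\sum_{i}\prod_{k}\Vert x_{i}^{(k)}\Vert^{q}\big)^{1/q}\sup_{y^{\ast\ast}\in B_{Y^{\ast\ast}}}\big(\sum_{i}\left\vert y^{\ast\ast}(y_{i}^{\ast})\right\vert^{q^{\ast}}\big)^{1/q^{\ast}}$ (reading $b_{i}^{(1)}=x_{i}^{(1)}$), which is exactly the definition of Cohen strongly $q$-summing. Applying Theorem \ref{gpdt} then yields probability measures $\mu_{1}$ on $K_{1}$ and $\mu_{2}$ on $B_{Y^{\ast\ast}}$ with the domination (\ref{2}); since $K_{1}$ is a single point the first integral collapses to $R_{1}=\prod_{k}\Vert x^{(k)}\Vert$, and this is precisely the desired inequality (\ref{qqaa}). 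The converse implication is the canonical integration direction, which is already contained in the ``if'' part of Theorem \ref{gpdt}.

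I expect the one genuinely delicate point to be the correct allocation of scaled variables. Because $S$ contains $T$ evaluated at \emph{all} $n$ arguments at once, one cannot bundle $x^{(1)},\dots,x^{(n)}$ into a single Banach variable $G_{1}$: scaling such a variable by $\alpha$ would scale $S$ by $\alpha^{n}$ and \emph{violate} the super-multiplicativity demanded in (\ref{novaq}). The resolution, which must be handled carefully, is to scale only one factor at a time and rely on the multilinearity of $T$, so that $S$ and $R_{1}$ each absorb a single power of the scalar; once this matching of homogeneities is set up correctly, the remaining verifications (weak-star compactness of $B_{Y^{\ast\ast}}$ and the transcription of the two inequalities) are routine.
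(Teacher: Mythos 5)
Your proposal is correct and follows the same master plan as the paper: both realize the Achour--Mezrag theorem as a specialization of Theorem \ref{gpdt} with $t=2$, $p=1$, $p_{1}=q$, $p_{2}=q^{\ast}$, $K_{2}=B_{Y^{\ast\ast}}$ (weak-star compact), $G_{2}=Y^{\ast}$, and $R_{2}(\varphi,\ldots,y^{\ast})=\vert\varphi(y^{\ast})\vert$. Where you genuinely differ is in the decomposition of the remaining variables. The paper keeps all of $x^{(1)},\ldots,x^{(n)}$ as unscaled variables ($r=n$, $E_{i}=X_{i}$), introduces a dummy scalar slot $G_{1}=\mathbb{K}$ together with a dummy compact $K_{1}=B_{X_{1}^{\ast}\times\cdots\times X_{n}^{\ast}}$, and sets $R_{1}(\varphi,x^{(1)},\ldots,x^{(n)},b)=\Vert x^{(1)}\Vert\cdots\Vert x^{(n)}\Vert$, independent of $b$. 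You instead promote $X_{1}$ itself to the scaled slot ($G_{1}=X_{1}$, $r=n-1$, $E_{j}=X_{j+1}$, one-point $K_{1}$). Your allocation is in fact the more careful one: with the paper's literal choices the first condition in (\ref{novaq}) fails, since $R_{1}(\varphi,x^{(1)},\ldots,x^{(n)},\eta b)=R_{1}(\varphi,x^{(1)},\ldots,x^{(n)},b)\not\leq\eta R_{1}(\varphi,x^{(1)},\ldots,x^{(n)},b)$ when $0\leq\eta<1$ and the $x^{(k)}$ are nonzero, and this $1$-homogeneity is genuinely used in the proof of Theorem \ref{gpdt} (in the step where the factors $\vartheta_{j}$ are inserted into the $R_{k}$'s). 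To repair the paper's version one must smuggle a factor $\vert b\vert$ into both $S$ and $R_{1}$ and evaluate at $b=1$ at the end --- the trick from \cite{psjmaa} that the paper alludes to earlier but does not spell out here. Your choice, with $S$ and $R_{1}$ exactly $1$-homogeneous in $b^{(1)}$ by multilinearity of $T$, makes (\ref{novaq}) hold with equality, so the specialization of (\ref{cam-errado}) to the Cohen inequality and of (\ref{2}) to (\ref{qqaa}) is verbatim and needs no repair; your closing remark about why one cannot bundle all $n$ arguments into a single scaled variable identifies precisely the delicate point. (The only pedantic gap is the case $n=1$, where your $r=n-1=0$ leaves no sets $E_{j}$; this is fixed by taking a singleton $E_{1}$.) Both routes prove the same theorem; yours buys a cleaner match with the homogeneity hypotheses of the abstract result, the paper's buys a setup in which the $X_{j}$'s play symmetric roles.
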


Note that by choosing the parameters%
\[
\left\{
\begin{array}
[c]{c}%
t=2\text{ and }r=n\\
E_{i}=X_{i}\text{ for all }i=1,...,n\text{ }\\
K_{1}=B_{X_{1}^{\ast}\times\cdots\times X_{n}^{\ast}}\text{ and }%
K_{2}=B_{Y^{\ast\ast}}\\
G_{1}=\mathbb{K}\text{ and }G_{2}=Y^{\ast}\\
\mathcal{H}=\mathcal{L}(X_{1},...,X_{n};Y)\\
p=1,\text{ }p_{1}=q\text{ and }p_{2}=q^{\ast}\\
S(T,x^{(1)},...,x^{(n)}, b, y^{\ast})=\left\vert y^{\ast}\left(
T(x^{(1)},...,x^{(n)}\right)  \right\vert \\
R_{1}(\varphi,x^{(1)},...,x^{(n)},b)=\left\Vert x^{(1)}\right\Vert
\cdots\left\Vert x^{(n)}\right\Vert \text{ }\\
R_{2}(\varphi,x^{(1)},...,x^{(n)},y^{\ast})=\left\vert \varphi(y^{\ast
})\right\vert
\end{array}
\right.
\]
we can easily conclude that $T:X_{1}\times\cdots\times X_{n}\rightarrow Y$ is
Cohen strongly $q$-summing if and only if $T$ is $R_{1},R_{2}$-$S$ abstract
$(q,q^{\ast})$-summing. Theorem \ref{gpdt} tells us that $T$ is $R_{1},R_{2}%
$-$S$ abstract $(q,q^{\ast})$-summing if and only if there is a $C>0$ and
there are probability measures $\mu_{k}$ in $K_{k}$, $k=1,2$, such that
\begin{align*}
S(T,x^{(1)},...,x^{(n)},b,y^{\ast})  &  \leq C\left(  \int_{K_{1}}R_{1}\left(
\varphi,x^{(1)},...,x^{(n)},b\right)  ^{q}d\mu_{1}\right)  ^{\frac{1}{q}}\\
&  \left(  \int_{K_{2}}R_{2}\left(  \varphi,x^{(1)},...,x^{(n)},y^{\ast
}\right)  ^{q^{\ast}}d\mu_{2}\right)  ^{\frac{1}{q^{\ast}}},
\end{align*}
i.e.,%
\begin{align*}
\left\vert y^{\ast}\left(  T(x^{(1)},...,x^{(n)})\right)  \right\vert  &  \leq
C\left(  \int_{B_{X_{1}^{\ast}\times\cdots\times X_{n}^{\ast}}}\left(
\left\Vert x^{(1)}\right\Vert \cdots\left\Vert x^{(n)}\right\Vert \right)
^{q}\text{ }d\mu_{1}\right)  ^{\frac{1}{q}}\left(  \int_{B_{Y^{\ast\ast}}%
}\left\vert \varphi(y^{\ast})\right\vert ^{q^{\ast}}d\mu_{2}\right)
^{\frac{1}{q^{\ast}}}\\
&  =C\left\Vert x^{(1)}\right\Vert ...\left\Vert x^{(n)}\right\Vert \left(
\int_{B_{Y^{\ast\ast}}}\left\vert \varphi(y^{\ast})\right\vert ^{q^{\ast}}%
d\mu_{2}\right)  ^{\frac{1}{q^{\ast}}}%
\end{align*}
and we recover (\ref{qqaa}) regardless of the choice of the positive integer
$m$ and $x^{(k)}\in X_{k}$, $k=1,...,n$.

\section{Weighted summability}

The notion of weighted summability (see the comments just after Theorem
\ref{ttta}) emerged from the paper \cite{psmz} as a natural concept when we
were dealing with problem (\ref{domGG}).

In this section we observe that this concept in fact emerges in more abstract
situations and seems to be unavoidable in further developments of the
nonlinear theory.

Let $0<q_{1},...,q_{n}<\infty,$ $1/q=%
{\textstyle\sum\limits_{j=1}^{n}}
1/q_{j},$ $X_{1},...,X_{n}$ be Banach spaces and
\[
A:Map(X_{1},...,X_{n};Y)\times X_{1}\times\cdots\times X_{n}\rightarrow
\lbrack0,\infty)
\]
be an arbitrary map. Let us say that $f\in Map(X_{1},...,X_{n};Y)$ is
$A$-$(q_{1},...,q_{n})$-dominated if there is a constant $C>0$ so that%

\begin{equation}
A(f,x^{(1)},...,x^{(n)})\leq C\left(  {\displaystyle\int\nolimits_{B_{X_{1}%
^{\ast}}}}\left\vert \varphi(x^{(1)})\right\vert ^{q_{1}}d\mu_{1}\right)
^{\frac{1}{q_{1}}}\cdot\dots\cdot\left(  {\displaystyle\int\nolimits_{B_{X_{n}%
^{\ast}}}}\left\vert \varphi(x^{(n)})\right\vert ^{q_{n}}d\mu_{k}\right)
^{\frac{1}{q_{n}}}, \label{fAdomi}%
\end{equation}
regardless of the choice of the positive integer $m$ and $x^{(k)}\in X_{k}$,
$k=1,...,n$.

In fact, more abstract maps could be used in the right-hand side of
(\ref{fAdomi}). However, since our intention is illustrative rather than
exhaustive, we prefer to deal with this more simple case.

\begin{theorem}
\label{cinco}An arbitrary map $f\in Map(X_{1},...,X_{n};Y)$ is $A$%
-$(q_{1},...,q_{n})$-dominated if there exists $C>0$ such that%
\begin{equation}
\left(  \sum_{j=1}^{m}\left(  \left\vert b_{j}^{(1)}...b_{j}^{(n)}\right\vert
A(f,x_{j}^{(1)},...,x_{j}^{(n)})\right)  ^{q}\right)  ^{\frac{1}{q}}\leq C%
{\displaystyle\prod\limits_{k=1}^{n}}
\sup_{\varphi\in B_{X_{k}^{\ast}}}\left(
{\displaystyle\sum\limits_{j=1}^{m}}
\left(  \left\vert b_{j}^{(k)}\right\vert \left\vert \varphi(x_{j}%
^{(k)})\right\vert \right)  ^{q_{k}}\right)  ^{1/q_{k}} \label{adomi}%
\end{equation}
for every positive integer $m$, $(x_{j}^{(k)},b_{j}^{(k)})\in X_{k}%
\times\mathbb{K}$, with $(j,k)\in\{1,...,m\}\times\{1,...,n\}.$
\end{theorem}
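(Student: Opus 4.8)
The plan is to deduce this statement as a direct specialization of the full general Pietsch Domination Theorem (Theorem \ref{gpdt}). The guiding observation is that the left-hand weight $|b_{j}^{(1)}\cdots b_{j}^{(n)}|$ attached to $A$ in (\ref{adomi}) should be absorbed into the abstract map $S$, while each factor $|b_{j}^{(k)}|\,|\varphi(x_{j}^{(k)})|$ on the right should become $R_{k}$. Concretely, I would set $t=r=n$, take $E_{k}=X_{k}$, $K_{k}=B_{X_{k}^{\ast}}$ with the weak-star topology (compact by Banach--Alaoglu), $G_{k}=\mathbb{K}$ and $\mathcal{H}=Map(X_{1},\ldots,X_{n};Y)$, together with $p=q$ and $p_{k}=q_{k}$. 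I would then define
\[
S(f,x^{(1)},\ldots,x^{(n)},b^{(1)},\ldots,b^{(n)})=|b^{(1)}\cdots b^{(n)}|\,A(f,x^{(1)},\ldots,x^{(n)})
\]
and
\[
R_{k}(\varphi,x^{(1)},\ldots,x^{(n)},b^{(k)})=|b^{(k)}|\,|\varphi(x^{(k)})|.
\]
With these identifications the hypothesis (\ref{adomi}) becomes verbatim the abstract summability inequality (\ref{cam-errado}); that is, $f$ is $R_{1},\ldots,R_{n}$-$S$-abstract $(q_{1},\ldots,q_{n})$-summing.

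Next I would check the two structural requirements of Theorem \ref{gpdt}. The continuity condition \textbf{(1)} holds because, with all other variables fixed, $\varphi\mapsto|b^{(k)}|\,|\varphi(x^{(k)})|$ is weak-star continuous on $B_{X_{k}^{\ast}}$. The multiplicativity conditions (\ref{novaq}) hold with equality: for $0\le\eta_{k}\le 1$ one has $R_{k}(\varphi,\ldots,\eta_{k}b^{(k)})=\eta_{k}R_{k}(\varphi,\ldots,b^{(k)})$, and for $0\le\alpha_{k}\le 1$ one has $S(f,\ldots,\alpha_{1}b^{(1)},\ldots,\alpha_{n}b^{(n)})=\alpha_{1}\cdots\alpha_{n}S(f,\ldots,b^{(1)},\ldots,b^{(n)})$, both being immediate from the definitions since the quantities are built multiplicatively from the moduli of the scalars.

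Having verified the hypotheses, Theorem \ref{gpdt} supplies Borel probability measures $\mu_{k}$ on $B_{X_{k}^{\ast}}$ and a constant $C>0$ such that inequality (\ref{2}) holds. Substituting the definitions of $S$ and $R_{k}$ and pulling the scalars $|b^{(k)}|$ out of the integrals yields
\[
|b^{(1)}\cdots b^{(n)}|\,A(f,x^{(1)},\ldots,x^{(n)})\le C\,|b^{(1)}\cdots b^{(n)}|\prod_{k=1}^{n}\left(\int_{B_{X_{k}^{\ast}}}|\varphi(x^{(k)})|^{q_{k}}\,d\mu_{k}\right)^{1/q_{k}}.
\]
Finally I would specialize to $b^{(k)}=1$ for every $k$ (any nonzero choice works, after cancelling the common factor), which removes the weights and produces exactly (\ref{fAdomi}); this completes the proof. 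The argument carries essentially no obstacle beyond the bookkeeping of matching the many indices of Theorem \ref{gpdt} to the present data: the real content has already been packaged into the general Pietsch theorem, and the only point requiring a moment of care is confirming that the weight-multiplicativity of $S$ and $R_{k}$ matches the asymmetric inequalities (\ref{novaq}) (one an upper bound, the other a lower bound), which here holds trivially because both reduce to equalities.
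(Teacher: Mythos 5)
Your proposal is correct and follows essentially the same route as the paper: the identical parameter choices ($r=t=n$, $E_{k}=X_{k}$, $K_{k}=B_{X_{k}^{\ast}}$, $G_{k}=\mathbb{K}$, $p=q$, $p_{k}=q_{k}$, and the same $S$ and $R_{k}$) feeding into Theorem \ref{gpdt}, then specializing the resulting domination inequality (here by taking $b^{(k)}=1$). The only difference is that you explicitly verify the continuity hypothesis and the conditions (\ref{novaq}), which the paper leaves implicit --- a harmless and in fact welcome addition.
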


\begin{proof}
Choosing the parameters%
\[
\left\{
\begin{array}
[c]{c}%
r=t=n\\
E_{j}=X_{j}\text{ and }G_{j}=\mathbb{K}\text{ for all }j=1,...,n\\
K_{j}=B_{X_{j}^{\ast}}\text{ for all }j=1,...,n\\
\mathcal{H}=Map(X_{1},...,X_{n};Y)\\
p=q\text{ and }p_{j}=q_{j}\text{ for all }j=1,...,n\\
S(f,x^{(1)},...,x^{(n)},b^{(1)},...,b^{(n)})=\left\vert b^{(1)}...b^{(n)}%
\right\vert A(f,x^{(1)},...,x^{(n)})\\
R_{k}(\varphi,x^{(1)},...,x^{(n)},b^{(k)})=\left\vert b^{(k)}\right\vert
\left\vert \varphi(x^{(k)})\right\vert \text{ for all }k=1,...,n.
\end{array}
\right.
\]
we easily conclude that $\left(  \ref{adomi}\right)  $ holds if and only if
$f$ is $R_{1},..,R_{n}$-$S$ abstract $(q_{1},...,q_{n})$-summing. In this case
Theorem \ref{gpdt} tells us that there is a constant $C>0$ and there are
measures $\mu_{k}$ on $K_{k}$, $k=1,...,n,$ such that%
\[
S(T,x^{(1)},...,x^{(n)},b^{(1)},...,b^{(n)})\leq C{\displaystyle\prod
\limits_{k=1}^{n}}\left(  \int_{K_{k}}R_{k}\left(  \varphi,x^{(1)}%
,...,x^{(n)},b^{(k)}\right)  ^{q_{k}}d\mu_{k}\right)  ^{\frac{1}{q_{k}}},
\]
i.e.,
\[
\left\vert b^{(1)}...b^{(n)}\right\vert A(f,x^{(1)},...,x^{(n)})\leq
C{\displaystyle\prod\limits_{k=1}^{n}}\left(  \int_{B_{X_{k}^{\ast}}}\left(
\left\vert b^{(k)}\right\vert \left\vert \varphi(x^{(k)})\right\vert \right)
^{q_{k}}d\mu_{k}\right)  ^{\frac{1}{q_{k}}},
\]
for all $(x^{(k)},b^{(k)})\in X_{k}\times\mathbb{K}$, $k=1,...,n$, and we
readily obtain $\left(  \ref{fAdomi}\right)  $.
\end{proof}

\begin{remark}
As we have mentioned before, the procedure of this last section is
illustrative. The interested reader can easily find a characterization similar
to Theorem \ref{cinco} in the full abstract context of Definition \ref{quatro}.
\end{remark}

\newpage


\begin{thebibliography}{99}                                                                                               %


\bibitem {AMe}D. Achour and L. Mezrag, On the Cohen strongly $p$-summing
multilinear operators, J. Math. Anal. Appl. \textbf{327} (2007), 550-563.

\bibitem {Acosta}M. Acosta, D. Garc\'{\i}a, M. Maestre, A multilinear
Lindenstrauss Theorem, J. Funct. Anal. \textbf{235} (2006), 122--136

\bibitem {AK}F. Albiac and N. Kalton, Topics in Banac Space Theory,
Springer-Verlag, 2005.

\bibitem {ag}R. Aron and J. Globevnik, Analytic Functions on $c_{0}$, Revista
Matematica Universidad Complutense de Madrid \textbf{2} (1989), 27-33.

\bibitem {Banach32}S. Banach, Th\'{e}orie des op\'{e}rations lin\'{e}aires,
PWN, 1932.

\bibitem {BH}H. F. Bohnenblust and E. Hille, On the absolute convergence of
Dirichlet series, Ann. Math. \textbf{32} (1931), 600-622.

\bibitem {ben1}G. Bennet, Inclusion mappings between $\ell^{p}$ spaces, J.
Funct. Anal. \textbf{12} (1973), 420-427.

\bibitem {ben2}G. Bennet, Schur multipliers, Duke Math. Journ. \textbf{44}
(1977), 609-639.

\bibitem {BBJP}G. Botelho, H.-A. Braunss, H. Junek and D. Pellegrino,
Holomorphy types and ideals of multilinear mappings, Studia Math. \textbf{177}
(2006), 43-65.

\bibitem {BBJP2}G. Botelho, H.-A. Braunss, H. Junek and D. Pellegrino,
Inclusions and coincidences for multiple summing multilinear mappings, Proc.
Amer. Math. Soc.\ \textbf{137} (2009), 991-1000.

\bibitem {unc}G. Botelho and D. Pellegrino, Absolutely summing operators on
Banach spaces with no unconditional basis, J. Math. Anal. Appl. \textbf{321}
(2006), 50-58.

\bibitem {Na}G. Botelho and D. Pellegrino, When every multilinear mapping is
multiple summing, Math. Nachr. \textbf{282} (2009), 1414-1422.

\bibitem {JFA}G. Botelho, D. Pellegrino and P. Rueda, Pietsch's factorization
theorem for dominated polynomials, J. Funct. Anal. \textbf{243} (2007), 257-269.

\bibitem {BPR}G. Botelho, D. Pellegrino and P. Rueda, A nonlinear Pietsch
Domination Theorem, Monatsh. Math. \textbf{158} (2009), 247-257.

\bibitem {BPRn}G. Botelho, D. Pellegrino and P. Rueda, A unified Pietsch
Domination Theorem, J. Math. Anal. Appl. \textbf{365} (2010), 269-276.

\bibitem {PellZ}G. Botelho, D. Pellegrino and P. Rueda, Cotype and absolutely
summing linear operators, to appear in Math. Z. doi: 10.1007/s00209-009-0591-y

\bibitem {CP}E. \c{C}aliskan and D. Pellegrino, On the multilinear
generalizations of the concept of absolutely summing operators, Rocky Mount.
J. Math. \textbf{37} (2007), 1137-1154.

\bibitem {Carl}B. Carl, Absolut $(p,1)$-summierende identische operatoren von
$l_{u}$ nach $l_{v},$ Math. Nachr. \textbf{63} (1974), 353-360.

\bibitem {DF}A. Defant and K. Floret, Tensor norms and operator ideals,
North-Holland Mathematical Studies 176, North-Holland, Amsterdam, 1993.

\bibitem {Def}A. Defant, D. Garc\'{\i}a, M. Maestre and D.
P\'{e}rez-Garc\'{\i}a, Bohr's strip for vector valued Dirichlet series, Math.
Ann. \textbf{342} (2008), 533-555.

\bibitem {Def2}A. Defant and P. Sevilla-Peris, A new multilinear insight on
Littlewood's 4/3-inequality, J. Funct. Anal.\textbf{\ 256} (2009), 1642-1664.

\bibitem {Dies}J. Diestel, An elementary characterization of absolutely
summing operators, Math. Ann. \textbf{196} (1972), 101-105.

\bibitem {DJT}J. Diestel, H. Jarchow, A. Tonge, Absolutely summing operators,
Cambridge University Press 1995.

\bibitem {Dimant}V. Dimant, Strongly $p$-summing multilinear mappings, J.
Math. Anal. Appl. \textbf{278} (2003), 182-193.

\bibitem {Din}S. Dineen, Complex Analysis on Infinite Dimensional Spaces,
Springer-Verlag, 1999.

\bibitem {DPR}E. Dubinsky, A. Pe\l czy\'{n}ski and H. P. Rosenthal, On Banach
spaces $X$ for which $\Pi_{2}(L_{\infty},X)=B(L_{\infty},X)$, Studia Math.
\textbf{44} (1972), 617-648.

\bibitem {DR}A. Dvoretzky and C.A. Rogers, Absolute and unconditional
convergence in normed linear spaces, Proc. Nat. Acad. Sci. USA 36 (1950), 192-197.

\bibitem {FaJo}J. Farmer and W. B. Johnson, Lipschitz $p$-summing operators,
Proc. Amer. Math. Soc. \textbf{137} (2009), 2989-2995.

\bibitem {Geiss}S. Geiss, Ideale multilinearer Abbildungen, Diplomarbeit, 1985.

\bibitem {Gro1953}A. Grothendieck, R\'{e}sum\'{e} de la th\'{e}orie metrique
des produits tensoriels topologiques, Bol. Soc. Mat. S\~{a}o Paulo 8
(1953/1956), 1-79.

\bibitem {Gro1955}A. Grothendieck, Produits tensoriels topologiques et espaces
nucl\'{e}aires, Memoirs Acad. Math. Soc. 16, 1955

\bibitem {Hardy}G. H. Hardy, J.E. Littlewood and G. P\'olya, Inequalities,
Cambridge University Press, 1952.

\bibitem {HJ}J. Hoffmann-J\o rgensen, Sums of independent Banach space valued
random variables, Studia Math. 52 (1974), 159--186.

\bibitem {Junek}H. Junek, M. C. Matos and D. Pellegrino, Inclusion theorems
for absolutely summing holomorphic mappings, Proc. Amer. Math. Soc.
\textbf{136} (2008), 3983-3991.

\bibitem {kw68}S. Kwapie\'{n}, Some remarks on $(p,q)$-summing operators in
$l_{p}$ sapces, Studia Math. \textbf{29} (1968), 327-337.

\bibitem {K22}S. Kwapie\'{n}, Isomorphic characterizations of inner product
spaces by orthogonal series with vector valued coefficients, Studia Math.
\textbf{44} (1972), 583--595.

\bibitem {LP}J. Lindenstrauss and A. Pe\l czy\'{n}ski,\ Absolutely summing
operators in $L_{p}$ spaces and their applications, Studia Math. \textbf{29}
(1968), 275-326.

\bibitem {LLL}J.E. Littlewood, On bounded bilinear forms in an infinite number
of variables, Quart. J. (Oxford Ser.) \textbf{1 }(1930), 164--174.

\bibitem {Mac}M.S. Macphail, absolute and unconditional convergence, Bull.
Amer. Math. Soc. 53 (1947), 121-123.

\bibitem {SP}F. Mart\'{\i}nez-Gim\'{e}nez and E. A. S\'{a}nchez-P\'{e}rez,
Vector measure range duality and factorizations of $(D,p)$-summing operators
from Banach function spaces, Bull. Braz. Math. Soc. New series \textbf{35}
(2004), 51-69.

\bibitem {Anais}M.C. Matos, Absolutely summing holomorphic mappings, An. Acad.
Bras. Ci., \textbf{68} (1996), 1-13.

\bibitem {Collect}M.C. Matos, Fully absolutely summing mappings and Hilbert
Schmidt operators, Collect. Mat. \textbf{54} (2003), 111-136.

\bibitem {Nach}M.C. Matos, Nonlinear absolutely summing multilinear mappings
between Banach spaces, Math. Nachr. \textbf{258} (2003), 71-89.

\bibitem {MP}M.C. Matos and D. Pellegrino, Fully summing mappings between
Banach spaces, Studia Math. \textbf{178} (2007),\textbf{\ }47-61.

\bibitem {Mau}R.D. Mauldin, The Scottish book, Birkhauser, 1982.

\bibitem {Ma2}B. Maurey, Th\'{e}or\'{e}mes de factorisation pour les
op\'{e}rateurs lin\'{e}aires \`{a} valeurs dans les espaces $L_{p}$,
Ast\'{e}risque, No. 11, Soci\'{e}t\'{e} Math\'{e}matique de France, Paris, 1974.

\bibitem {pisier}B. Maurey and G. Pisier, S\'{e}ries de variables aleatoires
vectorielles ind\'{e}pendantes et propriet\'{e}s g\'{e}ometriques des espaces
de Banach, Studia Math. \textbf{58} (1976), 45-90.

\bibitem {MPel}B. Mitiagin and A. Pe\l czy\'{n}ski, Nuclear operators and
approximative dimensions, Proceedings International Congress of Mathematics,
Moscow 1966.

\bibitem {Mujica}J. Mujica, Complex analysis in Banach spaces, North-Holland
Mathematics Studies 120, North-Holland, 1986.

\bibitem {Muj}X. Mujica, \textit{$\tau(p;q)$-summing mappings and the
domination theorem}, Port Math. \textbf{65} (2008), 221-226.

\bibitem {Studia}D. Pellegrino, Cotype and absolutely summing homogeneous
polynomials in $\mathcal{L}_{p}$ spaces. Studia Math. \textbf{157} (2003), 121-231.

\bibitem {psmz}D. Pellegrino and J. Santos, On summability of nonlinear
mappings: a new approach, to appear in\ Math. Z. doi: 10.1007/s00209-010-0792-4

\bibitem {psjmaa}D. Pellegrino and J. Santos, A general Pietsch Domination
Theorem, to appear in J. Math. Anal. Appl. doi:10.1016/j.jmaa.2010.08.019

\bibitem {PGG}D. Per\'{e}z-Garc\'{\i}a and I. Villanueva, Multiple summing
operators on Banach spaces, J. Math. Anal. Appl. \textbf{285} (2003), 86-96.

\bibitem {Perr}D. P\'{e}rez-Garc\'{\i}a, M.M. Wolf, C. Palazuelos, I.
Villanueva and M. Junge, Unbounded violations of tripartite Bell inequalities,
Commun. Math. Phys. \textbf{279} (2008), 455-486.

\bibitem {ppp}D. P\'{e}rez-Garc\'{\i}a, The inclusion theorem for multiple
summing operators. Studia Math. \textbf{165} (2004), 275-290.

\bibitem {stu}A. Pietsch, Absolut $p$-summierende Abbildungen in normieten
R\"{a}umen, Studia Math. \textbf{27} (1967), 333-353.

\bibitem {Pi}A. Pietsch, Operator Ideals, North-Holland, 1980.

\bibitem {PPPP}A. Pietsch, Ideals of multilinear functionals, in: Proceedings
of the Second International Conference on Operator Algebras, Ideals and Their
Applications in Theoretical Physics, Teubner-Texte, Leipzig, 1983, 185-199.

\bibitem {Ro}H.P. Rosenthal, On subspaces of $L_{p}$, Ann. of
Math.\textbf{\ 97} (1973), 344--373.

\bibitem {Ryan}R. Ryan, Introduction to Tensor Products of Banach Spaces.
Springer Monographs in Mathematics, Springer-Verlag, 2002.

\bibitem {Sem}S\'{e}minaire Laurent Schwartz 1969--1970: Applications
radonifiantes. Ecole Polytechnique, Paris 1970.

\bibitem {T1}M. Talagrand, Cotype and $(q,1)$-summing norms in Banach spaces,
Invent. Math. 110 (1992), 545-556.

\bibitem {Woy}P. Woytaszczyk, Banach spaces for analysts, Cambridge University
Press 1991.
\end{thebibliography}
\end{document}